\numberwithin{equation}{section}
\renewcommand\d{\partial}
\newcommand\dD{\textrm{d}}
\def\eps{\varepsilon }
\newcommand{\I}{{\rm I}}
\newcommand\br{\begin{remark}}
  \newcommand\er{\end{remark}}
\newcommand\bp{\begin{pmatrix}}
  \newcommand\ep{\end{pmatrix}}
\newcommand{\be}{\begin{equation}}
  \newcommand{\ee}{\end{equation}}
\newcommand\ba{\begin{equation}\begin{aligned}}
    \newcommand\ea{\end{aligned}\end{equation}}
\newcommand\ds{\displaystyle}
\newcommand{\beg}{\begin{example}}
  \newcommand{\eeg}{\end{exaplem}}
\newcommand{\bpr}{\begin{proposition}}
  \newcommand{\epr}{\end{proposition}}
\newcommand{\bt}{\begin{theorem}}
  \newcommand{\et}{\end{theorem}}
\newcommand{\bc}{\begin{corollary}}
  \newcommand{\ec}{\end{corollary}}
\newcommand{\bl}{\begin{lemma}}
  \newcommand{\el}{\end{lemma}}
\newcommand{\bd}{\begin{definition}}
  \newcommand{\ed}{\end{definition}}
\newcommand{\brs}{\begin{remarks}}
  \newcommand{\ers}{\end{remarks}}
\newtheorem{theorem}{Theorem}[section]
\newtheorem{proposition}[theorem]{Proposition}
\newtheorem{corollary}[theorem]{Corollary}
\newtheorem{lemma}[theorem]{Lemma}
\theoremstyle{remark}
\newtheorem{remark}[theorem]{Remark}
\theoremstyle{definition}
\newtheorem{definition}[theorem]{Definition}
\newtheorem{example}[theorem]{Example}
\newcommand\R{\mathbf R}
\newcommand\T{\mathbf T}
\newcommand{\Z}{\mathbf Z}
\newcommand{\lla}{\left\langle}
\newcommand{\rra}{\right\rangle}
\newcommand{\mypar}{{\mkern3mu\vphantom{\perp}\vrule depth 0pt\mkern2mu\vrule depth 0pt\mkern3mu}}
\newcommand{\norm}[3]{\|#3\|_{#1,#2}}
\newcommand{\dnorm}[3]{D_{#1,#2}(#3)}
\newcommand{\gampar}[1]{\gamma_{\mypar,#1}}
\newcommand{\gamperp}[1]{\gamma_{\perp,#1}}
\newcommand{\betpar}[1]{\beta_{\mypar,#1}}
\newcommand{\betperp}[1]{\beta_{\perp,#1}}
\newcommand{\Apar}{\d_{v_\mypar}}
\newcommand{\Cpar}{\d_{x_\mypar}}
\newcommand{\Aperp}{\nabla_{v_\perp}}
\newcommand{\Cperp}{\nabla_{x_\perp}}
\newcommand{\TT}{{\mathbf T}}
\newcommand\un{{\underline n}}
\newcommand\upsi{{\underline \psi}}
\newcommand\cC{{\mathcal C}}
\newcommand\cH{{\mathcal H}}
\newcommand\cM{{\mathcal M}}
\newcommand\cO{{\mathcal O}}
\newcommand\tf{{\widetilde f}}
\newcommand\tphi{{\widetilde \phi}}
\newcommand\tpsi{{\widetilde \psi}}
\newcommand\tN{\widetilde{N}}
\DeclareMathOperator*{\esssup}{ess\,sup}
\title{
  Anisotropic Boltzmann-Gibbs dynamics of strongly magnetized Vlasov-Fokker-Planck equations
}
\author{Maxime HERDA}
\address{\flushleft
  \underline{Maxime Herda}\\[.3em]
  Inria, Univ. Lille, CNRS, UMR 8524 – Laboratoire Paul Painlevé\\[.2em]
  F-59000 Lille, France\\[.2em]
  Email: \texttt{maxime.herda@inria.fr}}
  \author{L.Miguel Rodrigues}
  \address{\flushright
    \underline{Luis Miguel Rodrigues}\\[.3em]
    Universit\'e de Rennes 1,\\[.2em]
    IRMAR, UMR CNRS 6625,\\[.2em]
    263 avenue du General Leclerc;\\[.2em]\flushright
    F-35042 Rennes Cedex, FRANCE\\[.2em]
    E-mail: \texttt{luis-miguel.rodrigues@univ-rennes1.fr\\}}
    \thanks{Research of L.Miguel Rodrigues was partially supported by the ANR project
      BoND ANR-13-BS01-0009-01.\\
    }
    \date{\today}
\begin{document}
      \begin{abstract}
	We consider various sets of Vlasov-Fokker-Planck equations modeling the dynamics of charged particles in a plasma under the effect of a strong magnetic field. For each of them in a regime where the strength of the magnetic field is effectively stronger than that of collisions we first formally derive asymptotically reduced models. In this regime, strong anisotropic phenomena occur ; while equilibrium along magnetic field lines is asymptotically reached our asymptotic models capture a non trivial dynamics in the perpendicular directions. We do check that in any case the obtained asymptotic model defines a well-posed dynamical system and when self consistent electric fields are neglected we provide a rigorous mathematical justification of the formally derived systems. In this last step we provide a complete control on solutions by developing anisotropic hypocoercive estimates.
	
	\medskip
	
	\noindent \textsc{Keywords:} Maxwell-Boltzmann distribution; Gibbs equilibrium; plasma physics; disparate mass; strong magnetic field; Vlasov equation; Fokker-Planck operator; hypocoercivity; gyrokinetic theory.
	
	\smallskip
	\noindent \textsc{MSC2010 subject classifications:} 35Q83, 35Q84, 82D10, 76X05.
      \end{abstract}
      
      \maketitle
      
      \tableofcontents
      
      \section{Introduction}
      
      In the present contribution, we are interested in understanding interactions between oscillations and dissipative mechanisms when the latter are not the dominant effects. We are essentially motivated by the modeling of a strongly magnetized, weakly collisional three-dimensional plasma.
      
      The dynamics of a plasma, a charged gas composed of ions and electrons evolving under self-consistent and external electromagnetic interactions, can be accurately modeled by a system of kinetic equations of Vlasov-type. However, in applications, these models are rarely numerically simulated in their full  complexity because of computer-time costs. Instead, some reduced models are used in computational experiments. Even at a theoretical level these models are precious to capture coherent dynamics emerging on large-time scales from short-scale oscillations or out of dissipation. Therefore it is crucial to understand how they derive from modeling assumptions in some asymptotic regimes. In tokamak physics \cite{miyamoto_2006_plasma}, a usual reduction consists in approximating the density of electrons by Maxwell distribution in velocity and the so-called Boltzmann-Gibbs macroscopic density which reads
      \be
      n_\text{BG}(t,x)\ =\ \frac{1}{Z(t)}\,\exp\left(\frac{q\,\phi(t,x)}{k_BT(t)}\right),
      \label{BGgeneral}
      \ee
      where $t$ and $x$ are respectively the time and space variables, $q$ and $k_B$ stand for the elementary charge and the Boltzmann constant, $\phi$ is the electric potential and $T$ is a temperature. It depends also on a normalizing density $Z$ that could actually be taken to be $1$ by a suitable normalization of the electric potential. The approximation by Maxwellian distribution hinges directly on entropy dissipation arguments whereas the derivation of the Boltzmann-Gibbs approximation stems from the interplay between transport and collisions in an appropriate scaling. Depending on the modeling of both phenomena one obtains different asymptotic descriptions reflecting more or less complex dynamics for $T$. For instance, recently, in \cite{bardos_2016_maxwell}, Bardos, Golse, Nguyen and Sentis formally derive~\eqref{BGgeneral} jointly with an evolution rule for $T$ when starting from a magnetic-field-free kinetic description of electrons and prove that the corresponding evolution obtained by coupling with a full kinetic model for ions dynamics possesses global weak solutions. In any case the asymptotic reduction hinges on essentially the same mechanisms driving large-time\footnote{Here in the sense of $t\to\infty$.} asymptotics for similar models. On the latter the reader is referred to \cite{bouchut_1995_on,HR_hypo}.
      
      In this paper, we are rather interested in a detailed analysis of the normalizing factor $C:=1/Z$ for strongly magnetized plasma. While, as already pointed out, in many applications in tokamak physics the isotropic \eqref{BGgeneral} is often used, namely $C$ at $(t,x)$ is $C(t)$, we claim that under the influence of a strong three-dimensional magnetic field the relevant asymptotic form for the slaved density \eqref{BGgeneral} features a space dependent density $C(t,x_\perp)$ where $x_\perp$ is the projection of $x$ in the plane orthogonal to magnetic field lines. In other words we claim that in a strongly magnetized regime Boltzmann equidistribution of energy occur in velocity variables and spatially along the magnetic field but not perpendicular to it. Therefore we shall call the resulting approximation \emph{anisotropic Boltzmann-Gibbs approximation}. Note that when $C$ is space-dependent it cannot be eliminated by a suitable convention change in the definition of the electric potential. 
      Some similar questions have already been investigated by formal asymptotics 
      both in plasma physics and applied mathematics communities. We point out the articles of Negulescu, Possaner and their collaborators \cite{deCecco-Negulescu-Possanner, negulescu_2016_closure} to the attention of the reader as both an instance of a formal analysis similar to that developed here  and a source of references to the relevant plasma physics literature. Besides the formal derivation of a suitable anisotropic version of \eqref{BGgeneral} and of the corresponding spatio-temporal evolution for $C$, \emph{our goal is to provide various rigorous validations of our formal analysis and a mathematical analysis of the reduced dynamics for $C$}.
      
      \subsection{Our original kinetic model and its approximation}
      To keep the general picture as simple as possible we restrict our attention to a uniform magnetic field, compact spatial domains without boundaries and collisions with a thermal bath, which will yield a constant temperature $T$ in our anisotropic version of \eqref{BGgeneral}.   Our initial object of study is the evolution of a distribution function $f^\eps:\,\R_+\times\T^3\times\R^3 \rightarrow \R_+$, $(t,x,v)\mapsto f^\eps(t,x,v)$
      depending on time $t$, space $x$ and velocity $v$ (where $\T=\R/\Z$ is a one-dimensional torus) according to a Vlasov--Fokker-Planck equation (VFP) 
      \begin{equation} \label{FPf}
	\begin{aligned}
	  \\
	  \eps\ \d_t f^\eps + v\cdot\nabla_xf^\eps+\sigma\,E^\eps\cdot\nabla_vf^\eps - \frac{1}{\varepsilon}\sigma\,v^\perp\cdot\nabla_vf^\eps\, =\, \frac{1}{\eps^\alpha}
	  \textrm{div}_v(vf^\eps + \nabla_vf^\eps),\\~
	\end{aligned}
      \end{equation}
      where $v^\perp=(0,0,1)\wedge v$ --- in coordinates $(v_1,v_2,v_3)^\perp=(-v_2,v_1,0)$ --- stems from the Laplace force induced by the external magnetic field, $\sigma$ stands for the sign of the particle charge, $E^\eps$ is an electric field and $\alpha\in\R$ measures the collisions strength with respect to magnetic effects. Note that as is implicit above, the vector $(0,0,1)$ provides the direction of the magnetic field. Depending on the situation, we shall consider various forms of prescription for the electric field $E^\eps:\R_+\times\T^3\to\R^3$. In any case it derives from an electric potential $\phi^\eps:\R_+\times\T^3\to\R$, $E^\eps=-\nabla_x\phi^\eps$. The simplest situation corresponds to the case where the electric potential itself is taken as a known applied electric potential. For obvious reasons we shall refer to the latter case as the external field case or the linear case. In this case the electric potential is independent of $\eps$ and we sometimes drop the superscript $\eps$ on it. Two other situations are considered in the present paper. Indeed though we have so far mostly discussed motivations from asymptotic reduction in the electronic dynamics we also investigate a similar question for the ions dynamics. This results in two separated prescriptions of the electric field through modeling considerations that are only briefly summarized hereafter but detailed in Section~\ref{s:physics}.
      
      \medskip
      
      In the first nonlinear case, that we shall refer to as the electronic or the light species case, $f^\eps$ is the electron distribution function and phenomena are observed on ions characteristic scales. Consistently we should set $\sigma=-1$ here but we shall mostly keep $\sigma$ undetermined to offer a treatment as unified as possible of all situations. In this first case the scaling parameter $\eps$ represents the square root of the ratio $m_e/m_i$ of masses of electrons and ions. In the asymptotics $\eps\rightarrow 0$, particles are massless zero-inertia electrons and the corresponding dynamics is sometimes called adiabatic evolution. Note that here the fact that magnetic effects are \emph{strongly} perceived by electrons arise from the strong separation in time scales induced by a small mass ratio. As we have already stressed this limit plays a particular important role in the understanding of plasma dynamics and accordingly has been investigated in many similar but distinct situations \cite{herda_2015_massless, badsi_2016_modelling, negulescu_2016_closure, deCecco-Negulescu-Possanner, degond_disparate-mass, degond-lucquin_1996_collision, degond-lucquin_transport,  Bouchut_VP_small-electron-mass, bardos_2016_maxwell}. In a non relativistic regime, the electric field satisfies the classical Poisson equation
      \be\label{Poisson-electron}
      -\delta^2\Delta \phi^\eps\,=\,\sigma(n^\eps-\un),
      \ee
      where $n^\eps:\R_+\times\T^3\to\R_+$ is the macroscopic density of particles under consideration
      $$
      n^\eps(t,x)\,=\,\int_{\R^3}\,f^\eps(t,x,v)\,\dD\,v
      $$
      and $\un:\R_+\times\T^3\rightarrow\R$ denotes the background time-dependent density of the other species with opposite charges.
      
      \medskip
      
      In the second nonlinear model, called the ionic or the heavy species case in the following, $f^\eps$ denotes the ion distribution function. Scaling parameter $\eps$ stands for a scaled Larmor radius which directly measures the strength of the external magnetic field. In this regime, classical Boltzmann-Gibbs approximation for the macroscopic electron density leads to the so-called Poisson-Boltzmann equation for the electric field
      \be\label{Poisson-ion}
      -\delta^2\Delta \phi^\eps\,=\,\sigma\left(n^\eps-\frac{1}{\int_{\T^3} e^{\sigma\phi^\eps(y)}\,\dD\,y}\,e^{\sigma\phi^\eps}\right)\,.
      \ee
      In non-collisional settings the strong magnetic field limit $\eps\rightarrow0$ has also been thoroughly investigated by many authors \cite{degond_2016_asymptotic, Golse-StRaymond_strong-magnetic, StRaymond_large-velocities, frenod_1998_homogenization,Cheverry_whistler,Cheverry_axisymmetric,Miot_gyrokinetic,Possanner,Bostan_2D-VP,FR18}. As in those pieces of work time scale is chosen here to observe a non trivial averaged dynamics.
      
      \medskip

      Concerning the strength of collisions, we focus on cases where $|\alpha|<1$, $\alpha<0$ corresponding to a regime of evanescent collision frequency and $\alpha>0$ to a strongly collisional regime. The assumption $\alpha>-1$ ensures that even in weakly collisional regimes collisions are strong enough to lead on the large observation time scales we consider to a global equilibrium if magnetic effects were discarded (see \cite{bouchut_1995_on,HR_hypo} for a related analysis and Remark~\ref{BGwhy} for further comments) so that the non-equilibrium features that we shall describe are indeed due to the presence of a strong magnetic field. The condition $\alpha<1$ enforces that magnetic effects are dominant. We refer the reader to \cite{herda_2015_massless} for an extensive list of references concerning the borderline case $\alpha=1$ that yields a diffusive limit when $\eps\to0$ which provides an asymptotic model retaining some features of the external magnetic field. We point out that in contrast with the case under study here, in the borderline case the specific geometry of the magnetic field plays almost no role in the derivation of an asymptotic model.
      
      \bigskip
      
      In Section~\ref{s:heuristic} by arguing on formal grounds we provide a heuristic argument covering all cases introduced hereinabove and in Section~\ref{s:compactness} we offer a rigorous justification in the linear case that both suggest that, for any family $(f^\eps)_{\eps>0}$ of suitable solutions to \eqref{FPf} with a given initial datum $f_0$, taking the limit $\eps\to0$ yields a limiting distribution $f$ and a limiting electric potential $\phi$ that have reached an adiabatic regime along the magnetic field
      \be
      f(t,x,v)\,=\,N(t,x_\perp)\,\frac{e^{-\sigma\phi(t,x)}}{\int_{\T} e^{-\sigma\phi(t,x_\perp,y_\mypar)}\,\dD\,y_\mypar}\,M(v)
      \label{fMB}
      \ee
      where $x=(x_\perp,x_\mypar)\in\T^2\times\T$ and $M$ is a Maxwellian distribution
      $$
      M(v) = \frac{1}{(2\pi)^{3/2}}\,e^{-\frac{|v|^2}{2}}.
      $$
      In this asymptotic regime the dynamics is slaved to the evolution of a reduced macroscopic distribution $N:\R_+\times\T^2\to\R_+$ in the perpendicular direction, satisfying
      \be\label{guiding-center}
      \d_t N\,+\,\textrm{div}_{x_\perp}(N\,(\nabla_{x_\perp}\tphi)^\perp)\,=\,0
      \ee
      with initial condition
      $$
      N(0,x_\perp)\, =\, N_0(x_\perp)\,=\,\iint_{\T\times\R^3} f_0(x_\perp,x_\mypar,v)\,\dD\,x_\mypar\,\dD\,v\,,
      $$
      where $\tphi:\R_+\times\T^2\to\R$ is an $x_\mypar$-averaged version of $\phi$
      \be\label{averaged-phi}
      \tphi(t,x_\perp)\,=\,-\sigma\ln\left(\int_\T e^{-\sigma\phi(t,x_\perp,x_\mypar)}\dD\,x_\mypar\right)\,,
      \ee
      and $\phi$ is either the initially prescribed electric field in the external field case, or is obtained by solving \eqref{Poisson-electron} or \eqref{Poisson-ion} (dropping the superscript $\eps$) with the anisotropic Boltzmann-Gibbs density
      \be\label{slaved-n}
      n(t,x)\,=\,N(t,x_\perp)\,\frac{e^{-\sigma\phi(t,x)}}{\int_{\T} e^{-\sigma\phi(t,x_\perp,y_\mypar)}\,\dD\,y_\mypar}\,=\,N(t,x_\perp)\,e^{-\sigma\left(\phi(t,x)-\tphi(t,x_\perp)\right)}\,.
      \ee
      To ease comparison with the initial discussion we observe that \eqref{slaved-n} provides an anisotropic version of \eqref{BGgeneral} where $C=1/Z$ would be given by
      $$C(t,x_\perp)=N(t,x_\perp)/(\int_{\T} e^{-\sigma\phi(t,x_\perp,y_\mypar)}\,\dD\,y_\mypar).$$
      
      The reader may rightfully wonder whether the reduced dynamics described above could in disguise follow a set of purely \emph{two-dimensional} local differential equations. The answer turns out to be positive in the heavy-species case that actually follows
      $$
      \d_t N\,+\,\textrm{div}_{x_\perp}(N\,(\nabla_{x_\perp}\tphi)^\perp)\,=\,0\,,
      \qquad\qquad
      -\delta^2\Delta_{x_\perp}\tphi\,=\,\sigma\left(N-\frac{e^{\sigma\tphi}}{\int_{\T^2}e^{\sigma\tphi(\cdot,y_\perp)}\dD y_\perp}\right)\,.
      $$
      In the light-species case the presence of a three-dimensional background $\un$ prevents any fully two-dimensional reduction from happening. However in the special case where actually $\un$ does not depend on $x_\mypar$ the light-species case does reduce to a two-dimensional set of equations, the classical guiding center model
      \[
      \d_t N\,+\,\textrm{div}_{x_\perp}(N\,\nabla_{x_\perp}^\perp\phi)\,=\,0\,,\qquad\qquad
      -\delta^2\Delta_{x_\perp}\phi\, =\, \sigma(N-\un)\,,
      \]
      that has been analytically derived by Golse and Saint-Raymond \cite{Golse-StRaymond_strong-magnetic, StRaymond_large-velocities} and Miot \cite{Miot_gyrokinetic} as the large magnetic field limit of the Vlasov-Poisson system with uniform magnetic fields. Interestingly enough it is also consistent with formal arguments in the recent \cite{degond_2016_asymptotic} by Degond and Filbet that in a non-uniform large magnetic field regime obtain from the Vlasov-Poisson system a set of equations for a limiting $f(t,x,v)=F(t,x,|v_\perp|,v_\mypar)$ which when specialized to a constant magnetic field as considered here is
      $$
      \begin{array}{l}\ds
	\d_t F + \nabla_{x_\perp}^\perp\phi\cdot\nabla_{x_\perp}F -\sigma\,\partial_{x_\mypar}\psi\,\d_{v_\mypar}F + v_\mypar\,\d_{x_\mypar}P - \sigma\,\d_{x_\mypar}\phi\,\partial_{v_\mypar}P\,=\,0
	\\[0.25em]\ds
	v_\mypar\,\d_{x_\mypar}F - \sigma\d_{x_\mypar}\phi\,\d_{v_\mypar}F\,=\,0\,,
	\quad
	-\Delta_x\phi\,=\,\sigma\Big((2\pi)\int_{\R_+\times\R}F(\cdot,\cdot,p,v_\mypar)\,p\,\dD p\,\dD v_\mypar-\un\Big)
	\\\ds
	-\Delta_x\psi\,=\,\sigma\Big((2\pi)\int_{\R_+\times\R}P(\cdot,\cdot,p,v_\mypar)\,p\,\dD p\,\dD v_\mypar-\un\Big)\,
      \end{array}
      $$
      where $P$ is thought as a multiplier associated with the constraint on $F$ encoded by the first equation of the second-line. \emph{A priori} for collisionless plasmas nothing seems to force a Maxwellian distribution in velocity but if one inserts a Maxwellian distribution $f(t,x,v)=n(t,x)M(v)$ in the asymptotic model formally derived by Degond and Filbet then the first equation of the second-line of the system becomes \eqref{slaved-n} for some $N$ and, afterwards, integrating in $(x_\mypar,|v_\perp|,v_\mypar)$ the first equation to eliminate $(P,\psi)$ reduces the two first lines of the system to \eqref{guiding-center}-\eqref{averaged-phi}-\eqref{slaved-n}-\eqref{Poisson-electron}.
      
      \subsection{Main analytical results}
      
      We now state our two sets of mathematical results, on one hand existence results for nonlinear asymptotic models and on the other hand analytic proofs of validity for the linear asymptotic model.
      
      Roughly speaking, nonlinear asymptotic systems remotely look like transport equations for $N$ by a divergence-free vector-field obtained by applying to $N$ a linear differential operator of order $-1$. Yet in both cases the relation between $N$ and $\tphi$ is far from being linear and in the light-species case the system is not even truly two-dimensional so that it is not readily apparent that those nonlinear asymptotic systems do enjoy existence and well-posedness results expected for the foregoing class of equations. In Section~\ref{s:WP} we prove that it is the case at least in subcritical\footnote{However we let open more technical issues such as existence of renormalized solutions and well-posedness in critical spaces.} regimes and our strategy does follow the above analogy. Indeed once sufficient control on the construction of $\tphi$ has been obtained one may follow arguments used for similar equations such as Vlasov-Poisson equations \cite{Glassey,bouchut_2000_kinetic,Rein} or two-dimensional incompressible Euler equations in vorticity formulation \cite{Marchioro-Pulvirenti,Chemin,Majda-Bertozzi}. Consequently, the key part of the argument is a detailed analysis of existence, uniqueness, continuous dependence and regularity results for solutions to both elliptic equations \eqref{Poisson-ion}-\eqref{slaved-n} and \eqref{Poisson-electron}-\eqref{slaved-n} (as well as estimates on $\tphi$ in terms of $\phi$) that are for $\tphi$ as two-dimensional and linear as possible. From these we obtain the following results for the full asymptotic systems.
      
      \bt[Strong solutions]\label{th:strong}$ $\\
      Let $p>2$, $\delta>0$ and $\un\in \cC_b(\R_+;W^{1,p}(\T^3))$ with mean constant equal to one, $\un\geq0$.\\
      For any $N^\text{in}\in W^{1,p}(\T^2)$, $N^\text{in}\geq0$, $\int_{\T^2}N^\text{in}=1$, there exists a unique maximal solution $(N,I)$ to \eqref{guiding-center}-\eqref{averaged-phi}-\eqref{Poisson-electron}-\eqref{slaved-n} (\textit{resp.} \eqref{guiding-center}-\eqref{averaged-phi}-\eqref{Poisson-ion}-\eqref{slaved-n}) in $\cC(I;L^1(\T^2))\cap \cC_w(I;W^{1,p}(\T^2))$ starting from $N^\text{in}$ at time $0$. Moreover the maximal time of existence is uniformly bounded away from zero on bounded sets of $W^{1,p}(\T^2)$.
      \et
      
      In the former Theorem, $\cC_w$ denotes the space of functions that are continuous for the weak topology on the space of values and of course in the heavy-species case no $\un$ is actually needed.
      
      \bt[Weak solutions]\label{th:weak}
      Let $p>4/3$, $\delta>0$.\\
      Let $\un\in \cC(\R;L^{6/5}(\T^3))\cap L^\infty_{loc}(\R;L^p(\T^3)\cap L^{3/2,1}(\T^3))$ with mean constant equal to one, $\un\geq0$.\\
      For any $N^\text{in}\in L^p(\T^2)$, $N^\text{in}\geq0$, $\int_{\T^2}N^\text{in}=1$, there exists a global weak solution $N$ to \eqref{guiding-center}-\eqref{averaged-phi}-\eqref{Poisson-electron}-\eqref{slaved-n} (\textit{resp.} \eqref{guiding-center}-\eqref{averaged-phi}-\eqref{Poisson-ion}-\eqref{slaved-n}) starting from $N^\text{in}$ at time $0$. 
      \et
      
      The reader unfamiliar with Lorentz spaces as appearing in our last statement may replace $L^p(\T^3)\cap L^{3/2,1}(\T^3)$ with $L^q(\T^3)$ for some $q>\max(\{p,3/2\})$ to obtain a simpler weaker statement or consult \cite[Chapter~2]{Lemarie-Rieusset}.
      
      Now we elaborate on the analogy mentioned in the paragraph preceding our statements to motivate critical thresholds appearing in our foregoing theorems. For strong solutions of transport equations a natural threshold is obtained by requiring the Lipschitz property for the advection field as it enables to transport regularity. Assuming that $(\nabla_{x_\perp}\tphi)^\perp$ is indeed one-derivative more regular than $N$ this amounts here to $W^{2,p}(\T^2)\hookrightarrow W^{1,\infty}(\T^2)$, hence $p>2$. Concerning weak solutions, natural thresholds are obtained by requiring that all terms in the equations should be well-defined as distributions and this is usually an issue only for nonlinear terms, hence here only for $N(\nabla_{x_\perp}\tphi)^\perp$. Therefore we seek $\nabla_{x_\perp}\tphi \in L^{p'}(\T^2)$ since $N\in L^p(\T^2)$ (where $(\cdot)'$ denotes Lebesgue conjugation, hence $1/p+1/p'=1$).  Assuming again that $(\nabla_{x_\perp}\tphi)^\perp$ is indeed one-derivative more regular than $N$ this leads to enforce $W^{1,p}(\T^2)\hookrightarrow L^{p'}(\T^2)$ which yields $p=4/3$ as a critical exponent. Note that in our actual analysis we need to go from $N$ to $\tphi$ through $\phi$ and thus need to replace two-dimensional embeddings implicitly used above with suitable anisotropic versions of three-dimensional embeddings to preserve two-dimensional-like scalings.
      
      Though we do not explicitly state it here an inspection of our proofs also provides propagation of regularity for strong solutions and weak-strong uniqueness. We do not elaborate here on the crucial role of $\delta$, that we consider as fixed in the present contribution, on lower bounds for existence time of strong solutions but we refer the reader to \cite{HR_hypo} for a detailed discussion and a thorough analysis of a non magnetized version of our original systems.
      
      \medskip
      
      Second, in the external field case, we prove in Section~\ref{s:maths} that formal arguments may indeed be replaced with sound rigorous mathematical analysis. We provide such a justification for what is often considered to be the weakest notion of solutions that are physically relevant, that is, for free energy solutions of \eqref{FPf} in the sense of  Bouchut and Dolbeault \cite{bouchut_1995_on, dolbeault_1999_free}. More explicitly we consider all $L\log L$ distributions with finite second moments in velocity or equivalently, distributions with bounded mass, finite kinetic energy and finite entropy.
      
      \begin{theorem}\label{th:compactness}
	Assume that $\phi\in W^{1,1}_{loc}(\R_+;L^\infty(\T^3))$ and $f_0$ is such that
	\[
	f_0\geq 0,\quad \iint_{\T^3\times\R^3}\left(1 + |v|^2 + \ln_+ f_0\right)\,f_0\, \dD x \dD v<\infty
	\]
	(where $(\cdot)_+$ denotes positive part). Then there exists a unique $f^\eps\in \cC(\R_+;L^1(\T^3\times\R^3))$ solving Equation~\eqref{FPf} and starting from $f_0$ at time $0$. Moreover
	there exists $N\in L^\infty(\R_+;L^1(\T^2))$ and a positive sequence $(\eps_n)$ converging to zero such that when $n\rightarrow \infty$
	\[
	f^{\eps_n}(t,x,v)\rightarrow N(t,x_\perp)\,\frac{e^{-\sigma\phi(t,x)}}{\int_{\T} e^{-\sigma\phi(t,x_\perp,y_\mypar)}\dD y_\mypar}\,M(v)\qquad\text{weakly in }L^1_\text{loc}(\R_+;L^1(\T^3\times\R^3))\,,
	\]
	and $N$ solves the transport equation \eqref{guiding-center}-\eqref{averaged-phi}.
	\label{mainCV}
      \end{theorem}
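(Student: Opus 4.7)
The plan is to follow the Bouchut--Dolbeault free-energy-solution framework \cite{bouchut_1995_on,dolbeault_1999_free}. For each fixed $\eps>0$, \eqref{FPf} is a linear Vlasov--Fokker--Planck equation with bounded electric drift and a divergence-free in $v$ magnetic drift, so the well-posedness and $\cC(\R_+;L^1)$-continuity of $f^\eps$ starting from $f_0$ are direct consequences of the cited theory.

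The quantitative core is the $\eps$-dissipation identity, obtained by testing \eqref{FPf} against $\ln(f^\eps/M)+\sigma\phi$: the magnetic contribution vanishes because $v^\perp\cdot v=0$, the electric and spatial transport contributions cancel each other via $E=-\nabla_x\phi$, and only the Fokker--Planck operator produces dissipation, yielding
\[
\frac{d}{dt}\iint f^\eps\bigl[\ln(f^\eps/M)+\sigma\phi\bigr]\,+\,\frac{1}{\eps^{1+\alpha}}\iint f^\eps|\nabla_v\ln(f^\eps/M)|^2\,=\,\sigma\iint f^\eps\,\d_t\phi.
\]
Using $\phi\in W^{1,1}_{\text{loc}}(\R_+;L^\infty)$, Grönwall yields $\eps$-uniform bounds on mass, modulated entropy, kinetic energy, and the Fisher-information smallness $\int_0^T\iint f^\eps|\nabla_v\ln(f^\eps/M)|^2 \lesssim \eps^{1+\alpha}\to 0$ (here $\alpha>-1$ is essential). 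De la Vallée Poussin and Dunford--Pettis then provide, up to subsequence, $f^{\eps_n}\rightharpoonup f$ in $L^1_{\text{loc}}(\R_+;L^1(\T^3\times\R^3))$. Convexity and weak lower semi-continuity of the Fisher information pin down $\nabla_v\ln(f/M)=0$ on $\{f>0\}$, hence $f(t,x,v)=n(t,x)M(v)$ for some $n\in L^\infty(\R_+;L^1(\T^3))$.

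The anisotropic Gibbs structure of $n$ is extracted from the first velocity moments of \eqref{FPf}. Multiplying by $v_\mypar$ and integrating in $v$, the magnetic operator contributes nothing (the field is parallel to $e_\mypar$), the collision term gives $-\eps^{-\alpha}J^\eps_\mypar$ with $J^\eps_\mypar = \int v_\mypar f^\eps\,dv$, and using the Maxwellian limit together with the uniform energy bound the divergence of the second moment $\d_{x_k}\int v_\mypar v_k f^\eps\,dv$ passes to $\d_{x_\mypar}n$; this produces $J^\eps_\mypar/\eps^\alpha \rightharpoonup \sigma E_\mypar n-\d_{x_\mypar}n$ in $\cD'$. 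The mass continuity $\eps\d_t n^\eps+\nabla_x\cdot J^\eps=0$ divided by $\eps^\alpha$ forces $\d_{x_\mypar}(J^\eps_\mypar/\eps^\alpha)\to 0$ in $\cD'$ (all other terms being of order $\eps^{1-\alpha}\to 0$). Combining and writing the resulting distributional identity as $\d_{x_\mypar}(e^{\sigma\phi}n)=-h(t,x_\perp)\,e^{\sigma\phi}$, integration over $x_\mypar\in\T$ forces $h\equiv 0$, i.e.\ $n(t,x)=N(t,x_\perp)e^{-\sigma\phi(t,x)}/\int_\T e^{-\sigma\phi}\,dy_\mypar$ with $N=\int_\T n\,dx_\mypar$. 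An analogous computation on the perpendicular components of the moment equation yields $J^\eps_\perp/\eps\rightharpoonup \sigma^{-1}e^{-\sigma\phi}(\nabla_{x_\perp}(e^{\sigma\phi}n))^\perp$, and inserting this into the continuity equation integrated in $x_\mypar$ and divided by $\eps$ produces \eqref{guiding-center}--\eqref{averaged-phi}, the curl-type contribution $(\nabla_{x_\perp}N)^\perp$ being automatically divergence-free.

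The main obstacle is the rigorous passage to the limit in the moment and second-moment integrals, since $|v|^k$-weighted quantities are not continuous under mere weak $L^1$ convergence. This is handled by combining the Fisher-information smallness with a Log-Sobolev inequality in $v$ for $M$: via Csiszár--Kullback this upgrades to strong $L^1(\T^3\times\R^3)$-convergence of $f^\eps-n^\eps M$ to $0$, while the uniform finite kinetic energy provides the needed tightness in $v$. The joint two-scale nature of the singular limit, collisions at rate $\eps^{-\alpha}$ and magnetic at $\eps^{-1}$ with neither term explicitly acting on $x_\mypar$, means the Gibbs structure only emerges through the precise matching of the moment equations and the continuity equation at the exact scales $\eps^\alpha$ and $\eps$ respectively.
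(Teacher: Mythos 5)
Your overall strategy (free--energy solutions, entropy/Fisher dissipation, Dunford--Pettis, moment equations) matches the paper's, but there is one genuine gap: the passage to the limit in the second velocity moments. You need $\int_{\R^3} v_\mypar v_k f^\eps\,\dD v\rightharpoonup \d_{x_\mypar}$-type limits and $\textrm{div}_{x_\perp}\int v_\perp\otimes v_\perp f^\eps\,\dD v\rightharpoonup \nabla_{x_\perp}n$, and you propose to justify this by upgrading $f^\eps-n^\eps M\to 0$ to strong $L^1$ via log-Sobolev plus Csisz\'ar--Kullback, "the uniform finite kinetic energy providing the needed tightness in $v$." This does not close the argument: a uniform bound $\sup_{t,\eps}\iint |v|^2 f^\eps<\infty$ gives tightness of $f^\eps$ itself but \emph{not} uniform integrability of $|v|^2 f^\eps$, so $\iint |v|^2\,|f^\eps-n^\eps M|$ is not controlled by the unweighted $L^1$ convergence; one would need moments of order strictly greater than $2$ (or evanescent cut-offs as in Saint-Raymond's work), neither of which you have or invoke. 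The paper explicitly flags this obstruction (Remark~4.4) and circumvents it with the algebraic identity
\begin{equation*}
\int_{\R^3} v_\perp\otimes v_\perp\, f^\eps\,\dD v\,=\,\int_{\R^3} v_\perp\otimes (v_\perp+\nabla_{v_\perp}) f^\eps\,\dD v\,+\,\Big(\int_{\R^3}f^\eps\,\dD v\Big)\,\I\,,
\end{equation*}
the first term being $2\int v_\perp\sqrt{f^\eps}\otimes\nabla_{v_\perp}(\sqrt{f^\eps/M})\sqrt{M}$, which by Cauchy--Schwarz is $\cO(\eps^{(1+\alpha)/2})$ in $L^2_tL^1_x$ thanks to the dissipation bound. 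Without this cancellation (or a cut-off argument) your moment computations are not justified.

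A secondary, structural difference: for the parallel Gibbs relation the paper never touches second moments. It tests the kinetic equation against smooth compactly supported $\psi$ with $v^\perp\cdot\nabla_v\psi=0$, uses the strong $L^2_tL^1$ convergence of $\eps^{-\alpha}(vf^\eps+\nabla_vf^\eps)$ to zero to kill the collision term, and obtains $\d_{x_\mypar}n=-\sigma n\,\d_{x_\mypar}\phi$ directly; since the test functions are compactly supported in $v$, weak $L^1$ convergence of $f^\eps$ suffices. Your route through the $v_\mypar$-moment equation combined with the rescaled continuity equation yields instead the second-order identity $\d_{x_\mypar}\big(e^{-\sigma\phi}\d_{x_\mypar}(e^{\sigma\phi}n)\big)=0$, which you then integrate over $\T$ using periodicity; this is a workable alternative in principle, but it inherits the second-moment problem above, it requires first establishing that $j^\eps_\perp/\eps^\alpha\to0$ (hence the perpendicular moment analysis must come first to avoid circularity), and it is strictly more demanding than the paper's test-function argument. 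Repair the second-moment step and reorder these deductions, and the rest of your outline goes through.
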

      
      Note that when $\phi$ is sufficiently regular to ensure uniqueness for the transport equation one may actually replace convergence along the subsequence $(\eps_n)$ with full convergence as $\eps\to0$ by the classical argument relying on compactness and uniqueness of limit points.
      
      Finally, we want to obtain convergence rates in strong norms for asymptotic limits that are expected to be compatible with strong convergence. Indeed one expects three kind of phenomena to co-exist, dissipation by collisions leads to Maxwellian distribution in velocity, interaction between transport and collisions yields anisotropic Boltzmann-Gibbs behavior of macroscopic density for some $N^\eps$, strong oscillations induced by the magnetic field average $N^\eps$ to some $N$ solving \eqref{guiding-center}. By essence the last part does not lend itself to a strong convergence analysis. For the other ones we develop a form of anisotropic hypocoercive estimates that provide a strong form of control on norms involved in the limiting process. Note that even for the simplest part of the convergence analysis leading to Maxwellian behavior one needs to assume some strong form of localization of initial data to derive decay rates. Accordingly we assume our initial data to lie in some weighted functional space. The justification of the Boltzmann-Gibbs approximation also requires control on derivatives of the solutions but we assume no regularity on the initial data and instead gain this regularity at later time from hypoelliptic properties of the dynamics. At a technical level our proof proceeds by building a suitable dissipated functional that captures both hypocoercivity observed on relevant commutators as in \cite{Villani_hypo,HR_hypo} and hypoellipticity as in \cite{Herau_FP-confining, Herau-Nier,HR_hypo}. For simplicity we assume $\phi$ to be time-independent here.
      
      \begin{theorem}[Convergence rates]\label{th:hypo}
	Let $\phi\in W^{2,\infty}(\T^3)$.\\
	Then there exists a positive constant $C$ such that for any $\eps\in(0,1)$ and any initial data $f_0\in L^2(M^{-1}(v)\,\dD x\,\dD v)$, the strong solution $f^\eps$ solving \eqref{FPf} and starting from $f_0$ satisfies
	\[
	\begin{array}{rcl}\ds
	  \left\|(t,x,v)\mapsto
	  f^\eps(t,x,v)-n^\eps(t,x)M(v)\right\|_{L^2(\R_+;\ L^2(M^{-1}(v)\dD x\dD v))}&\leq&\ds
	  C\,\|f_0\|_{L^2(M^{-1}(v)\dD x\dD v)}\,\eps^{\frac{\alpha+1}{2}}\\ \ds
	  \left\|(t,x)\mapsto n^\eps(t,x)-N^\eps(t,x_\perp)\,\frac{e^{-\sigma\phi(x)}}{\int_{\T} e^{-\sigma\phi(x_\perp,y_\mypar)}\,\dD\,y_\mypar}\right\|_{L^2(\R_+\times\T^3)}
	  &\leq&\ds C\,\|f_0\|_{L^2(M^{-1}(v)\dD x\dD v)}\,\eps^{\frac{1-|\alpha|}{2}}
	\end{array}
	\]
	where
	$$
	n^\eps\,=\,\int_{\R^3} f^\eps(\cdot,\cdot,v)\dD v
	\qquad\textrm{and}\qquad
	N^\eps\,=\,\int_{\T}n^\eps(\cdot,\cdot,y_\mypar)\dD y_\mypar\,.
	$$
	In particular, as $\eps\rightarrow 0$ the distribution function $f^\eps$ gets $\eps^{\frac{1-|\alpha|}{2}}$-close to the anisotropic Maxwell-Boltzmann density 
	$$
	(t,x,v)\mapsto N^\eps(t,x_\perp)\,\frac{e^{-\sigma\phi(x)}}{\int_{\T} e^{-\sigma\phi(x_\perp,y_\mypar)}\,\dD\,y_\mypar}\,\frac{e^{-\frac{|v|^2}{2}}}{(2\pi)^{3/2}} 
	$$
	in $L^2(\R_+;\ L^2(M^{-1}(v)\dD x\dD v))$.
      \end{theorem}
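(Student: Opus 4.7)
The plan is to work in a weighted $L^2$ space adapted to the electrostatic equilibrium. Setting $h^\eps := f^\eps\,e^{\sigma\phi(x)}/M(v)$, the equation rewrites
\[
\eps\,\partial_t h^\eps\,+\,\{H,h^\eps\}\,-\,\frac{\sigma}{\eps}\,v^\perp\cdot\nabla_v h^\eps \,=\, \frac{1}{\eps^\alpha}\,\cL h^\eps,\qquad H(x,v)=\tfrac{|v|^2}{2}+\sigma\phi(x),\qquad \cL h=\Delta_v h - v\cdot\nabla_v h,
\]
where $\{\cdot,\cdot\}$ is the Hamiltonian bracket and $\cL$ is the Ornstein--Uhlenbeck operator, self-adjoint in $L^2(M\,\dD v)$ with kernel spanned by constants in $v$. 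Since $\phi\in L^\infty$ the weight $\mu(x,v):=e^{-\sigma\phi(x)}M(v)$ makes $L^2(\mu\,\dD x\,\dD v)$ equivalent to $L^2(M^{-1}\,\dD x\,\dD v)$, and using $v^\perp\cdot v=0$ and $\nabla_v\cdot v^\perp=0$ one checks that the Hamiltonian, magnetic and electric transport operators are all skew-adjoint in $L^2(\mu)$.

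The first estimate then follows from the clean identity
\[
\frac{\dD}{\dD t}\,\|h^\eps\|^2_{L^2(\mu)}\,=\,-\,\frac{2}{\eps^{\alpha+1}}\,\|\nabla_v h^\eps\|^2_{L^2(\mu)},
\]
combined with the Gaussian Poincar\'e inequality at fixed $x$, which gives $\|\nabla_v h^\eps\|^2_{L^2(\mu)}\geq \|h^\eps-\bar h^\eps\|^2_{L^2(\mu)}$ where $\bar h^\eps(t,x)=\int h^\eps M\,\dD v=e^{\sigma\phi}n^\eps$. Integration in time yields $\|h^\eps-\bar h^\eps\|_{L^2(\R_+;L^2(\mu))}\leq \eps^{(\alpha+1)/2}\|f_0\|_{L^2(\mu)}$, which is exactly the first claimed rate in view of the identity $h^\eps-\bar h^\eps = (f^\eps-n^\eps M)\,e^{\sigma\phi}/M$.

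The second estimate is obtained by controlling $\partial_{x_\mypar}h^\eps$ through an anisotropic Villani--H\'erau--Nier hypocoercive functional. The decisive observation is that $v^\perp$ involves only $v_1,v_2$, so $[\partial_{v_\mypar},v^\perp\cdot\nabla_v]=0=[\partial_{x_\mypar},v^\perp\cdot\nabla_v]$: the magnetic rotation commutes with parallel derivatives, and the parallel hypocoercivity proceeds as in the field-free case. Together with the standard $[\partial_{v_\mypar},v\cdot\nabla_x]=\partial_{x_\mypar}$ and $[\partial_{v_\mypar},\cL]=-\partial_{v_\mypar}$, and with $[\partial_{x_\mypar},\sigma\nabla\phi\cdot\nabla_v]=\sigma\nabla\partial_{x_\mypar}\phi\cdot\nabla_v$ controlled by $\phi\in W^{2,\infty}$, this suggests working with
\[
\cF^\eps(t)\,=\,\|h^\eps\|^2_{L^2(\mu)}\,+\,A_1\eps^{p_1}t\,\|\partial_{v_\mypar}h^\eps\|^2_{L^2(\mu)}\,+\,A_2\eps^{p_2}t^2\,\langle\partial_{v_\mypar}h^\eps,\partial_{x_\mypar}h^\eps\rangle_{L^2(\mu)}\,+\,A_3\eps^{p_3}t^3\,\|\partial_{x_\mypar}h^\eps\|^2_{L^2(\mu)},
\]
where the time weights $t^k$ encode the hypoelliptic gain of regularity and ensure $\cF^\eps(0)=\|f_0\|^2_{L^2(\mu)}$ even without regularity on $f_0$. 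Differentiating $\cF^\eps$ along the flow, the cross term produces a coercive $\|\partial_{x_\mypar}h^\eps\|^2$ contribution from the commutator $[\partial_{v_\mypar},v\cdot\nabla_x]=\partial_{x_\mypar}$. After time-integration, a weighted Poincar\'e inequality in $x_\mypar$ on $\T$ (valid because $e^{-\sigma\phi}$ is uniformly positive and bounded) applied to $\bar h^\eps-\langle\bar h^\eps\rangle_\mypar$, with $\langle u\rangle_\mypar(x_\perp)=\int_\T u\,e^{-\sigma\phi}\dD x_\mypar\,/\int_\T e^{-\sigma\phi}\dD x_\mypar$, converts the control of $\|\partial_{x_\mypar}h^\eps\|^2$ into the second bound.

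The main technical obstacle is the precise tuning of the exponents $p_1,p_2,p_3$ and of the small constants $A_1,A_2,A_3$: the three intrinsic scales --- collisions at $\eps^{-\alpha-1}$, magnetic rotation at $\eps^{-2}$, transport at $\eps^{-1}$ --- must be balanced so that the dissipation generated by the cross term is coercive in $\|\partial_{x_\mypar}h^\eps\|^2_{L^2(\mu)}$ with time-integrated strength precisely $\eps^{1-|\alpha|}$, while every other error term --- in particular the $[\partial_{x_\mypar},\nabla\phi\cdot\nabla_v]$ terms and the mixed perpendicular-parallel contributions generated by $\{H,\cdot\}$ --- is dominated by the main dissipations. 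The appearance of $|\alpha|$ rather than $\alpha$ in the final rate reflects the symmetric role played by $\eps^{-\alpha-1}$-dissipation and $\eps^{-1}$-transport, which forces one to distinguish the sub-cases $\alpha\geq 0$ and $\alpha<0$ at the bookkeeping level. Finally, all formal manipulations should be justified by a standard regularization of initial data followed by a passage to the limit, which is harmless since the end-point bounds depend on $f_0$ only through $\|f_0\|_{L^2(M^{-1})}$.
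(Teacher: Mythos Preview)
Your approach is essentially the one taken in the paper, and the overall strategy is sound. The paper also reduces to $h^\eps=f^\eps/\cM$, obtains the first rate from the Gaussian Poincar\'e inequality exactly as you do, and then builds an anisotropic hypocoercive functional to extract $L^2$-in-time control on $\partial_{x_\mypar}h^\eps$ at scale $\eps^{(1-|\alpha|)/2}$. Your simplification of including \emph{only} the parallel derivatives $\partial_{v_\mypar},\partial_{x_\mypar}$ in the modified functional is legitimate: the paper actually builds a larger functional containing perpendicular blocks as well, but explicitly remarks after its dissipation lemma that ``only the first estimate --- on the parallel derivative --- is needed'' and that the perpendicular bound is ``essentially useless''. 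The crucial commutation $[\partial_{v_\mypar},v^\perp\cdot\nabla_v]=0$ is precisely why the parallel block decouples from the magnetic scale $\eps^{-2}$; thus your listing of the magnetic rotation among ``the three intrinsic scales to balance'' in the parallel tuning is misleading --- in the paper's bookkeeping the constraint $\beta_{\mypar,3}\geq|\alpha|$ comes entirely from the Fokker--Planck and transport terms, while the magnetic constraint $\beta_{\perp,3}\geq 2-\alpha$ affects only the perpendicular block.

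Two details that deserve care. First, pure $t^k$ weights will not close for large time: the paper uses $\min\bigl(1,t/\eps^{1+\alpha}\bigr)^k$ so that the time-derivatives of the weights are active only on $[0,\eps^{1+\alpha}]$ and the dissipation term carries the factor $\min(1,t/\eps^{1+\alpha})^2$ rather than $t^2$, which is what yields $\int_{\eps^{1+\alpha}}^\infty\|\partial_{x_\mypar}h^\eps\|^2\,\dD t\lesssim\eps^{1-|\alpha|}\|h_0\|^2$. The short interval $[0,\eps^{1+\alpha}]$ is then handled crudely via $\|n^\eps\|_{L^\infty_tL^2_x}\leq C\|h_0\|$ and the length of the interval. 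Second, your weighted Poincar\'e in $x_\mypar$ with weight $e^{-\sigma\phi}$ and average $\langle\bar h^\eps\rangle_\mypar=N^\eps/\int_\T e^{-\sigma\phi}$ is correct and in fact slightly more direct than the paper's route, which passes through an intermediate quantity $\widetilde N^\eps=\int_\T n^\eps e^{\sigma\phi}\dD x_\mypar\cdot\int_\T e^{-\sigma\phi}\dD x_\mypar$ and then shows $\|N^\eps-\widetilde N^\eps\|_{L^2}$ is itself controlled by the same Poincar\'e error.
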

      
      A few comments on rates are in order. Rates in our statement are expected to be optimal. Indeed the first rate is naturally associated with the fact that solutions to 
      $$
      \eps\ \d_t \tf^\eps \, =\, \frac{1}{\eps^\alpha}\textrm{div}_v(v\tf^\eps + \nabla_v\tf^\eps)
      $$
      starting from $\tf_0\in L^2(M^{-1}(v)\dD v)$ decay to a Maxwellian distribution at a rate $e^{-t/\eps^{1+\alpha}}$ in $L^2(M^{-1}(v)\dD v)$. 
      On the other hand the second one is closely connected to the convergence of solutions to
      $$
      \eps\ \d_t f^\eps + v\cdot\nabla_xf^\eps+\sigma\,E^\eps\cdot\nabla_vf^\eps\, =\, \frac{1}{\eps^\alpha}\textrm{div}_v(vf^\eps + \nabla_vf^\eps)
      $$
      starting from $f_0\in L^2(M^{-1}(v)\,\dD x\,\dD v)$ such that $\nabla_xf_0,\nabla_vf_0, vf_0\in L^2(M^{-1}(v)\,\dD x\,\dD v)$ towards a Maxwell-Boltzmann distribution at a rate $e^{-{\kappa_0\,t}/{\eps^{1-|\alpha|}}}$ in $L^2(M^{-1}(v)\dD x\dD v)$ for some $\kappa_0>0$. Note that the latter rate is not monotone with respect to $\alpha$. This reflects that collisions are not the only mechanism involved in this convergence. For a thorough discussion we refer the reader to \cite{HR_hypo} that analyzes nonlinear magnetic-field-free analogous questions.

      \subsection*{Structure of the paper}
      
      The structure of the paper follows the organization of our introduction. In Section~\ref{s:formal} we provide motivations for our initial models and a formal derivation of reduced equations. In Section~\ref{s:WP} we analyze nonlinear reduced asymptotic models to prove Theorems~\ref{th:strong} and~\ref{th:weak}. Then in Section~\ref{s:maths} we prove Theorems~\ref{th:compactness} and~\ref{th:hypo} hence justify the linear reduced model. At last in Section~\ref{sec:conclu}, we provide some further technical comments and perspectives on our results.
      
      \section{Heuristic considerations}\label{s:formal}
      
      In the following we give modeling and heuristic justifications for both our starting and limiting models. We explain in Section \ref{s:physics} how our initial physical models --- Equation~\eqref{FPf} completed with some prescription of the electric field --- arise precisely in the regime $\eps\ll 1$ from more complete two-species systems. Then, in Section \ref{s:heuristic}, starting from \eqref{FPf} we argue on formal grounds to identify the asymptotic dynamics obeying \eqref{guiding-center}-\eqref{averaged-phi}-\eqref{slaved-n}.
      
      \subsection{Original physical variables}\label{s:physics}
      
      Though our goal is mostly to identify qualitative mechanisms our choice of systems originates in concrete realistic plasma dynamics, from which they are obtained by a combination of arguments --- that we expound now --- either of asymptotic analysis type or made purely to lower technicalities to their bare minimum.
      
      We begin our discussion by considering the full dynamics of a plasma containing electrons with negative charge $-q$ and mass $m_e$ and ions of mass $m_i$ and charge $Zq$, where $q$ is the elementary charge and $Z$ is the atomic number of ions. In the following we take into account an external unidirectional magnetic field of constant amplitude $\bar{B}$, to be thought of as a confinement field for a fusion device, but as is classical in this regime (and rigorously justified in some closely related contexts, see for instance \cite{degond_1986_local}) we neglect self-induced magnetic effects. To reduce technicalities in the analysis we also idealize the mechanism of collisions between particles and describe collisions as if they were occurring with a thermal bath with temperature $\theta$ and zero mean velocity. To our opinion this is by far the less realistic of our simplifications and the only one that does not bear principally on modeling considerations but we expect that the phenomena that we identify in our simplified evolution do occur on a much wider range of models. In this direction we stress that the formal asymptotic analysis carried out in Section \ref{s:heuristic} does not rely heavily on the specific form of the full collision operator but only on the nature of its kernel, here Maxwellian equilibria with zero mean velocity and fixed temperature.
      
      Denoting respectively by $f_e$ and $f_i$ electron and ion distribution functions, the original equations, written in physical variables, are 
      \be
      \left\{\begin{aligned}
	&\partial_tf_e + v\cdot\nabla_xf_e  - \tfrac{q}{m_e}\left(- \nabla_x\phi -\bar{B}\,\,v^\perp \right) \cdot \nabla_vf_e &=&&& \nu_\text{col}^{(e)}\,\nabla_v\cdot \left( vf_e + \tfrac{k_B \theta}{m_e}\nabla_vf_e\right),\\
	&\partial_tf_i + v\cdot\nabla_xf_i  + \tfrac{Zq}{m_i}\left(- \nabla_x\phi -\bar{B}\,\,v^\perp \right) \cdot \nabla_vf_i &=&&& \nu_\text{col}^{(i)}\,\nabla_v\cdot \left( vf_i + \tfrac{k_B \theta}{m_i}\nabla_vf_i\right),\\
	&-\eps_0\,\Delta_x\phi\ =\ Zq\,n_i-q\,n_e,
      \end{aligned}\right.
      \label{FPfull}
      \ee
      where the macroscopic density of the species $s\in\{i,e\}$ is given by
      $
      n_s(t,x) = \int_{\R^3}f_s(t,x,v)\,\dD v.
      $
      The parameter $\nu_\text{col}^{(s)}$ is the characteristic frequency of collisions with the thermal bath for the species $s\in\{i,e\}$, $k_B$ is the Boltzmann constant and $\eps_0$ is the dielectric constant. To ease comparisons of respective sizes of parameters and allow corresponding asymptotic analysis we now aim at turning \eqref{FPfull} in dimensionless form. To do so, in the following, we denote by $L$ the characteristic length of the system, $t_\text{obs}$ the characteristic observation time and $V_i$, $V_e$ the thermal velocity of  ions and electrons respectively. For any other physical quantity $G$, we denote by $\bar{G}$ the characteristic value of $G$ and $G'$ the dimensionless quantity associated to $G$ so that $G = \bar{G}G'$. Accordingly, we introduce
      \[
      f_s(t,x,v)\ =\ \frac{\bar{n}_s}{V_s^3}\,f_s'\left(\frac{t}{t_\text{obs}}, \frac{x}{L}, \frac{v}{V_s}\right),\qquad n_s(t,x)\ =\ \bar{n}_s n'\left(\frac{t}{t_\text{obs}}, \frac{x}{L}\right),\]
      \[\phi(t,x)\ =\ \bar{\phi}\phi'\left(\frac{t}{t_\text{obs}}, \frac{x}{L}\right).
      \]
      where $s\in\{i,e\}$, and reformulate \eqref{FPfull} in terms of $f_i'$, $f_e'$, $n_i'$, $n_e'$ and $\phi'$.
      \subsubsection*{Physical scales} We assume that the plasma is globally neutral and introduce the characteristic number of electrons $N$ defined by 
      \[
      Z\bar{n}_i = \bar{n}_e = N.
      \]
      Moreover we consider a hot plasma \cite{bellan_2006_fundamentals} meaning that characteristic temperatures (or kinetic energy) are equal to that of the thermal bath.
      It implies
      \[
      m_iV_i^2 = m_eV_e^2 = k_B\theta.
      \]
      Besides, the characteristic potential energy due to electric effects is assumed of same order than that of kinetic effects which yields
      \[
      \bar{\phi} = \frac{k_B\theta}{q}.
      \]
      
      Now we may introduce several physical time and space scales characterizing each of the electric, magnetic and collision phenomena.
      Electrostatic constants are the  Debye length and plasma time respectively given by 
      \[
      \lambda_D = \sqrt{\frac{\eps_0k_B \theta}{q^2N}},\qquad\qquad 
      t^{(s)}_{p} = \frac{\lambda_D}{V_s}.
      \]
      They measure the typical length of influence of an isolated particle and the period of electrostatic waves in the plasma.
      Magnetic constants are cyclotron time and the Larmor radius defining the period and radius  of gyration of particles around magnetic field lines and reading respectively
      \[
      t^{(s)}_{c} = \frac{m_s}{q\bar{B}_\text{ext}},\qquad\qquad
      r_L^{(s)} = V_s t_c^{(s)}.
      \]
      Finally the mechanism of collisions is characterized by a typical time between two collisions $1/(\nu_\text{col}^{(s)})$ and the mean free path of a particle
      \[
      l_s = \frac{V_s}{\nu_\text{col}^s}.
      \]
      We refer to the physics literature \cite{goldston_1995_introduction, bellan_2006_fundamentals,miyamoto_2006_plasma} for deeper insights and thorough comments on the respective roles of each of the former quantities. 
      \subsubsection*{Dimensionless parameters} From now on we use ionic quantities as references and set
      \[
      t_\text{obs} = \frac{1}{\nu_\text{col}^{(i)}}\tau_\text{obs}\,,\qquad\qquad L = l_i.                                                                                                       
      \]
      This leads to the consideration of quotients
      \[
      \delta = \frac{\lambda_D}{L},\qquad \mu = \frac{r_L^{(i)}}{L},\qquad \gamma = \frac{l_e}{l_i},\qquad \lambda = \frac{m_e}{m_i}.
      \]
      Since we focus on globally-neutral hot plasmas one may expect the ratio between mean free paths $\gamma$ to depend only on the mass ratio $\lambda$. Consistently we set for some real exponent $\alpha$,
      \[
      \gamma = \lambda^{\alpha/2}.
      \]
      Incidently we point to the attention of the reader \cite[Section 1.5]{degond_disparate-mass} as an instance of a detailed modeling analysis of the dependence of collision frequencies with respect to mass ratio $\lambda$ for more reallistic collisional operators and we note that the modeling arguments there suggests that $\alpha=0$ would be the case of highest practical interest. 
      Going on by dropping primes on dimensionless quantities, we may express the dimensionless form of the equations as
      \be
      \left\{\begin{aligned}
	\frac{1}{\tau_\text{obs}} &\partial_tf_e + \frac{1}{\sqrt{\lambda}}v\cdot\nabla_xf_e  + \frac{1}{\sqrt{\lambda}}\left(\nabla_x\phi +\frac{1}{\sqrt{\lambda}\,\mu}\,v^\perp \right) \cdot \nabla_vf_e \ =\ \frac{1}{\lambda^{(1+\alpha)/2}}\,\nabla_v\cdot \left( vf_e +\nabla_vf_e\right),\\
	\frac{1}{\tau_\text{obs}}  &\partial_tf_i + v\cdot\nabla_xf_i  + Z\left(- \nabla_x\phi -\frac{1}{\mu}v^\perp \right) \cdot \nabla_vf_i\ =\ \,\nabla_v\cdot \left( vf_i +\nabla_vf_i\right),\\
	&-\delta^2\,\Delta_x\phi\ =\ n_i-n_e,
      \end{aligned}\right.
      \label{FPscaled}
      \ee
      We refer to \cite{degond_disparate-mass,badsi_2016_modelling} for more details on similar computations. Since the changes required to deal with the general case are of notational nature from now on we assume that the atomic number of ions $Z$ is equal to one. Note that the foregoing assumption, made here purely for expository purposes, is consistent with the presence of light ions in tokamak plasmas \cite{miyamoto_2006_plasma}. 
      
      In the situations we have in mind both $\lambda$ and $\mu$ are small. In particular, the mass ratio $\lambda$ is of order $10^{-4}$ for a deuterium plasma. As a consequence typical time scales of ions and electrons completely uncouple since, with our previous set of notation,
      \[
      \lambda
      \,=\,\left(\frac{ t_p^{(e)}}{ t_p^{(i)}}\right)^2\,=\,\frac{ t_c^{(e)}}{ t_c^{(i)}}\,.
      \]
      The systems we consider in the core of the present paper are motivated by the asymptotic analysis of the case where $\lambda\ll\mu\ll1$. We believe that this distinguished regime is of particular relevance for applications. We stress however that it is far from covering the full range of $(\lambda,\mu)\ll 1$ and we do not intend here to justify a double limit in any case.
      
      \subsubsection*{Ion dynamics interacting with massless electrons in the strong magnetic field limit.}
      Neglecting the fact that a strong magnetic field could slow down thermalization effects,  one expects, as it is usually assumed in tokamak physics applications, that the electronic zero-inertia regime $\lambda\rightarrow 0$ brings the macroscopic density of electrons to a Boltzmann-Gibbs density provided that the scaled observation time is at least of order one, $\tau_\text{obs}\gtrsim1$. See for instance Remark~\ref{BGwhy} for some elements of justification. This yields 
      \[
      n_e(t,x) = \frac{e^{\phi(t,x)}}{\int_{\T^3}e^{\phi(t,x)}\dD x}. 
      \]
      This approximation being taken for granted, we are now interested in the  limit $\mu\rightarrow 0$, meaning that the Larmor radius becomes infinitely small. We point out that the corresponding asymptotics has already been extensively studied in related contexts, notably for collisionless models in \cite{frenod_1998_homogenization, Golse-StRaymond_strong-magnetic, StRaymond_large-velocities,Miot_gyrokinetic,Possanner,Bostan_2D-VP,FR18}. 
      
      In the present case, the small parameter of interest is thus \[\eps = \mu\]
      and we recover \eqref{FPf}-\eqref{Poisson-ion}, with $\sigma =1$, $\alpha = 0$ on the observation time scale $\tau_\text{obs}=1/\mu$. As already mentioned the later choice reflects the fact that one needs to consider long-time dynamics to observe a significant averaged evolution.
      
      \subsubsection*{Electron dynamics in the massless limit}
      
      Though the former scenario may seem plausible our intention is precisely to convince the reader that the above-mentioned Boltzmann-Gibbs approximation for $n_e$ fails when collisions are not the dominant mechanism leading the electron dynamics. To do so, we focus on the kinetic description of electrons in the massless limit $\lambda\to0$ when $\alpha<1$.
      
      In this context the small parameter of interest is \[\eps = \sqrt{\lambda}.\] 
      Since the ion evolution equation does not depend explicitly on $\lambda$ and the coupling of equations is relatively weak we make the further simplification assumption that the macroscopic density of ions $n_i$ is independent of $\lambda$ and hence can be considered as frozen and given by a background density $\un$. This reflects the fact that without this simplification one would expect to recover in the limit $\lambda\rightarrow0$ the same asymptotic equation derived under the frozen ion density assumption but coupled with the Vlasov-Fokker-Planck equation for the evolution of the "frozen" density $\un$. In slightly different related situations several papers dealing with the full coupling 
      \cite{herda_2015_massless,bardos_2016_maxwell} have proved that it was indeed a sound expectation.

      After the harmless notational simplification consisting in setting $\mu = 1$, we recover the model \eqref{FPf}-\eqref{Poisson-electron}, with $\sigma  = -1$ when $\tau_\text{obs}=1$.
      
      \br
      Our analysis not only indicates that the classical isotropic Boltzmann-Gibbs approximation fails to capture the electron dynamics in the massless limit but also provides an anisotropic replacement that may be coupled with the slow dynamics of ions. For completeness' sake we draw now briefly some consequences for the ion dynamics. Our first claim is that the dynamics of ions surrounded by massless electrons could be correctly described by
      $$
      \left\{\begin{array}{rcl}\ds
	\frac{1}{\tau_\text{obs}}\d_t N_e\,+\,\mu\,\textrm{div}_{x_\perp}(N_e\,(\nabla_{x_\perp}\tphi_e)^\perp)&=&0\\
	\frac{1}{\tau_\text{obs}}\d_tf_i + v\cdot\nabla_xf_i  + \left(- \nabla_x\phi -\frac{1}{\mu}v^\perp \right) \cdot \nabla_vf_i&=&\,\nabla_v\cdot \left( vf_i +\nabla_vf_i\right),\\
	-\delta^2\,\Delta_x\phi&=&n_i-Ne^{\phi-\tphi_e},
      \end{array}\right.
      $$
      where $\tphi_e$ is obtained from $\phi$ by the nonlinear averaging \eqref{averaged-phi} with $\sigma = -1$. An analysis similar to the one expounded here then shows that on time-scale $\tau_{\text{obs}}=1/\mu$ the limit $\mu\to\infty$ leads $n_i$ to an anisotropic Boltzmann-Gibbs regime where corresponding $(N_e,N_i)$ satisfies a system of coupled guiding-center equations
      $$\left\{
      \begin{array}{rcl}\ds
	\d_t N_s\,+\,\textrm{div}_{x_\perp}(N_s\,\nabla_{x_\perp}^\perp\phi)&=&0,\qquad 
	s=e,i,\\\ds
	-\delta^2\,\Delta_{x_\perp} \phi&=&\ds N_i - N_e\,.
      \end{array}\right.
      $$
      Incidentally note that consistently with classical gyrokinetic theory both species are advected by the same electric drifts irrespective of the sign of their charges.
      \er
      \subsection{Formal derivation}\label{s:heuristic}
      
      Now we begin the derivation of \eqref{guiding-center}-\eqref{averaged-phi}-\eqref{slaved-n} from \eqref{FPf}. Here we argue formally and in particular take for granted in this heuristic section that any sequence of functions that is bounded in one norm actually converges in any norm required to take limits and that rates of convergence may indeed be predicted by balancing orders in equations. It goes without telling that conclusions thus derived must then be taken with care. In particular even when formally derived results may be rigorously justified, in general the mathematical validation may not proceed by proving each step of the crude path followed arguing heuristically. Moreover especially in strongly oscillating asymptotics formal arguments may lead to misleading conclusions. Indeed terms that may seem prominent may actually turn to be irrelevant because of averaging effects, that is because for instance they are derivatives of small immaterial terms, or in the converse direction terms that are small may have huge derivatives and thus could yield strong effects. Besides, taking oscillating limits typically do no commute with nonlinear operations. This being stated we begin our formal process leading to an identification of possible asymptotic dynamics but try to use arguments as close as possible from those that we do justify in the linear setting in Section \ref{s:compactness}.
      
      To do so we pick a family of solutions $(f^\eps)$ and denote $(\phi^\eps)$ corresponding electric potentials (whether they actually depend on $\eps$ or not). We may normalize potentials by requiring for any $t$ and $\eps$
      \be\label{phi-normalized}
      \int_{\T^3}e^{-\sigma\,\phi^\eps(t,y)}\,\dD y\,=\,1\,.
      \ee
      This amounts to replacing the original $\phi^\eps$ with
      $$
      \phi^\eps\,-\,\ln\left(\int_{\T^3}e^{-\sigma\,\phi^\eps(t,y)}\,\dD y\right)\,,
      $$
      a process that as expected does not modify any equation.

      We stress that aiming at a unified in $\alpha$ derivation precludes any use of an asymptotic expansion in powers of $\eps$. Hence alternatively we rely directly on symmetries of the equations to sort out different terms of the equation. Incidentally note that even in an $\alpha$-by-$\alpha$ identification several symmetry arguments would be needed to derive that some terms that are expected to sum to zero actually vanish separately.
      
      \subsubsection*{Free energy identities} 
      To begin with, we introduce the time-dependent measure $\dD \mu_t^\eps=\cM^\eps(t,x,v)\,\dD x\,\dD v$ where the global Maxwellian is given by
      \be
      \cM^\eps(t,x,v)\,=\,e^{-\sigma\,\phi^\eps(t,x)}\,M(v)
      \label{BG}
      \ee
      and $M(v)=(2\pi)^{-3/2}\,e^{-|v|^2/2}$, so that, roughly speaking, transport terms and collision terms are respectively skew-symmetric and symmetric with respect to $\dD\mu^\eps_t$. As a consequence, setting
      $$
      h^\eps\,=\,f^\eps\,(\cM^\eps)^{-1},
      $$
      we obtain the following family of \textit{a priori} estimates.
      \bl Let $f^\eps$ be a smooth and localized solution of the magnetized Vlasov-Fokker-Planck equation \eqref{FPf} with smooth electric potential $\phi^\eps$. Then for any smooth function $H$,
      \be\label{e:H}
      \begin{array}{rcl}\ds
	\frac{\dD}{\dD t}\left(\iint_{\T^3\times\R^3} H(h^\eps)\,\dD\mu^\eps\right) 
	&+&\ds
	\frac{1}{\eps^{\alpha+1}}\iint_{\T^3\times\R^3} H''(h^\eps)\,|\nabla_v h^\eps|^2\dD\mu^\eps
	\\[0.5em]
	&=&\ds-\iint_{\T^3\times\R^3}(H'(h^\eps)h^\eps-H(h^\eps))\,\sigma\d_t\phi^\eps\,\dD\mu^\eps\,.
      \end{array}  
      \ee
      In particular,
      \be
      \frac{\dD}{\dD t}\left(\iint_{\T^3\times\R^3}f^\eps\left(\ln f^\eps + \frac{|v|^2}{2}+\sigma\phi^\eps\right) \right)
      +
      \frac{1}{\eps^{\alpha+1}}\iint_{\T^3\times\R^3} \frac{1}{h^\eps}\,|\nabla_v h^\eps|^2\dD\mu^\eps
      =\ds-\int_{\T^3}n^\eps\,\sigma\d_t\phi^\eps\,\,.
      \label{freeenerg}
      \ee
      \el
      \begin{proof} 
	We omit the detailed computation leading to~\eqref{e:H} as perfectly standard. Equation~\eqref{freeenerg} follows then from the choice $H(h)=h\,\ln(h)$.
      \end{proof}
      
      Assuming moreover \eqref{Poisson-electron} or \eqref{Poisson-ion} one may rewrite \eqref{freeenerg} in a form related to the conservation of some free energy. However since we shall make no use of such considerations we do not dwell on those here.
      
      Now we formally take limits in the free energy identities of the previous lemma. For the limiting functions, we drop the superscript $\eps$ (\textit{e.g.} $f$, $\phi$, $n$, $h$, $\mu$ ...). Since $\alpha>-1$, we get for all time $t\geq0$ 
      $$
      \int_0^t\iint_{\T^3\times\R^3} \frac{1}{h(s,\cdot,\cdot)}\,|\nabla_v h(s,\cdot,\cdot)|^2\dD\mu_s\quad\dD s\,=\,0.
      $$
      As a consequence $h$ only depends on space and time variables which means that the limiting distribution function is a local Maxwellian, namely $f = nM$. 
      By inserting this into equation~\eqref{FPf} 
      and balancing terms we can surmise\footnote{Unlike the rest of the formal analysis this expectation has no rigorous counterpart in our mathematical analysis. It is highly dubious that it could be fulfilled.} 
      that
      $$
      \tf^\eps=\frac{f^\eps-f}{\eps}
      $$
      should possess a limit $\tf$ that satisfies for any $(t,x,v)$
      $$
      v\cdot\nabla_xf(t,x,v)-\sigma\,\nabla_x\phi(t,x)\cdot\nabla_vf(t,x,v)-\sigma\,v^\perp\cdot\nabla_v\tf(t,x,v)\,=\,0\,.
      $$
      Now comes the key observation that the image of $v^\perp\cdot\nabla_v$ consists of functions with zero average with respect to the polar angle $\theta$ from cylinder coordinates $(|v_\perp|,\theta,v_\mypar)$ implicitly defined by $v=(|v_\perp|\cos(\theta),|v_\perp|\sin(\theta),v_\mypar)$. From this and the radial character of $f$ with respect to velocity variable stems 
      $$
      v_\mypar\,\d_{x_\mypar}f(t,x,v)-\sigma\,\d_{x_\mypar}\phi(t,x)\d_{v_\mypar}f(t,x,v)\,=\,0\,.
      $$
      Using once again that $f$ is Maxwellian in velocity we conclude that indeed
      $$
      f(t,x,v)\,=\,N(t,x_\perp)\,\frac{e^{-\sigma\phi(t,x)}}{\int_{\T} e^{-\sigma\phi(t,x_\perp,y_\mypar)}\,\dD\,y_\mypar}\,\frac{e^{-\frac{|v|^2}{2}}}{(2\pi)^{3/2}}
      $$
      where
      $$
      N(t,x_\perp)\,=\,\iint_{\T\times\R^3} f(t,x,v)\,\dD x_\mypar \dD v\,.
      $$
      
      \subsubsection*{Dynamics of velocity moments}
      The only task left is the identification of an equation for $N$. To proceed we denote by $N^\eps$ the integral of $f^\eps$ with respect to $(x_\mypar,v)$ and observe that a corresponding integration of \eqref{FPf} leads to a certain form of conservation of charge
      $$
      \d_t N^\eps\,+\,\textrm{div}_{x_\perp}(J^\eps_\perp)\,=\,0
      $$
      where
      $$
      J^\eps(t,x)\,=\,\frac{1}{\eps}\int_{\T\times\R^3} v\,f^\eps(t,x,v)\,\dD x_\mypar \dD v\,.
      $$
      This suggests that $J^\eps_\perp$ could possess a limit $J_\perp$. To identify this limit we proceed as above. By multiplying \eqref{FPf} by $v_\perp$ and integrating one derives
      $$
      \begin{array}{rcl}\ds
	\eps^2\,\d_tJ_\perp^\eps&+&\ds\textrm{div}_{x_\perp}\left(\int_{\T\times\R^3}v_\perp\otimes v_\perp\,f^\eps(\cdot,\cdot,x_\mypar,v)\dD x_\mypar \dD v\right)
	\ +\ \sigma\int_{\T}\nabla_{x_\perp}\phi^\eps(\cdot,\cdot,x_\mypar)\,n^\eps(\cdot,\cdot,x_\mypar)\dD x_\mypar\\[1em]
	&=&\ds
	-\sigma\,(J_\perp^\eps)^\perp\,-\,\eps^{1-\alpha}\,J_\perp^\eps
      \end{array}
      $$
      where ${}^\perp$ acts on vectors of $\R^2$ through $(j_1,j_2)^\perp=(-j_2,j_1)$. Taking limits brings
      $$
      -\sigma\,(J_\perp)^\perp\,=\,\textrm{div}_{x_\perp}\left(\int_{\T\times\R^3}v_\perp\otimes v_\perp\,f(\cdot,\cdot,x_\mypar,v)\dD x_\mypar \dD v\right)
      \ +\ \sigma\int_{\T}\nabla_{x_\perp}\phi(\cdot,\cdot,x_\mypar)\,n(\cdot,\cdot,x_\mypar)\dD x_\mypar
      $$
      which, by using the special form of $f$, may also be written as
      $$
      -\sigma\,(J_\perp)^\perp\,=\,\nabla_{x_\perp}N\,-\,N\,\nabla_{x_\perp}\left(\ln\left(\int_{\T}e^{-\sigma\phi(\cdot,\cdot,x_\mypar)}\,\dD x_\mypar\right)\right)\,.
      $$
      Inserting this in the limit of the foregoing equation of conservation of charge achieves the derivation of \eqref{guiding-center} after noticing that $(\nabla_{x_\perp}N)^\perp$ is divergence free.
      
      \br
      To ease comparisons we briefly sketch how in a collision-dominated scenario a similar formal analysis may be carried out. Therefore in the present remark we insert $\sigma_0\in\{0,1\}$ in front of the magnetic field term and allow $\sigma_0=0$ (non magnetized case) and $\alpha>1$ (very strong collisions). The condition $\alpha>-1$ is sufficient to support the expectation that $f^\eps$ converges to some $f=nM$. Then one turns to the analysis of moments
      $$
      n^\eps=\int_{\R^3}f^\eps(\cdot,\cdot,v)\dD v\,,\qquad\qquad
      j^\eps=\frac1\eps\int_{\R^3}v\,f^\eps(\cdot,\cdot,v)\dD v
      $$
      involved in the following version of conservation of charge
      $$
      \d_t n^\eps\,+\,\textrm{div}_{x}(j^\eps)\,=\,0
      $$
      and computes that
      \be\label{e:jeps}
      \eps^2\,\d_tj^\eps +
      \textrm{div}_{x}\left(\int_{\R^3}v\otimes v\,f^\eps\, \dD v\right)
      \ +\ \sigma\nabla_{x}\phi^\eps\,n^\eps
      \,=\,-\sigma\,\sigma_0\,(j^\eps)^\perp\,-\,\eps^{1-\alpha}\,j^\eps\,.
      \ee
      If $\sigma_0=0$ and $-1<\alpha<1$ taking the limit $\eps\to0$ and inserting $f=nM$ yields
      $$
      \nabla_x n\,+\,\sigma n\nabla_x\phi\,=\,0
      $$ 
      hence the classical Maxwell-Boltzmann approximation as for instance in \cite{bardos_2016_maxwell}. If $\alpha=1$ (no matter what $\sigma_0$ is) one rather obtains 
      $$
      \nabla_x n\,+\,\sigma n\nabla_x\phi
      \,=\,-\sigma\,\sigma_0\,j^\perp\,-\,j
      $$
      that may be solved for $j$ and inserted in $\d_tn+\text{div}_x(j)=0$ to derive a diffusive equation for $n$ as for instance in \cite{herda_2015_massless}. At last if $\alpha>1$ (no matter what $\sigma_0$ is) one derives $j=0$ hence $\d_tn=0$ thus the formal analysis suggests convergence to a global equilibrium. In our case we have essentially taken the limit of the third component of \eqref{e:jeps} to obtain Boltzmann-Gibbs behavior along the magnetic field and we have integrated the two first components of \eqref{e:jeps} to derive the equation for $N$.
      \label{BGwhy}
      \er
      
      \section{Well-posedness of asymptotic models}\label{s:WP}
      
      In this part, we investigate the well-posedness of the nonlinear asymptotic models and accordingly, we prove Theorem~\ref{th:strong} and Theorem~\ref{th:weak}. The asymptotic models for light and heavy species only differ by their Poisson-Boltzmann equation, which is 3D and anisotropic in the former case and 2D and isotropic in the latter. In Section~\ref{s:light}, we provide a detailed analysis of the anisotropic equation and the proof of the theorems for the light species case, the challenge being to prove 2D-like results on a model with some 3D features. In Section~\ref{s:heavy}, we indicate how to adapt the arguments to the simpler --- already 2D --- heavy species case.
      
      \subsection{Light particles}\label{s:light}
      
      In the present subsection we analyze the asymptotic system obtained in the electronic case so that, since it is consistent with modeling, for reading convenience we set $\sigma=-1$. Therefore we study the well-posedness of the following system
      \be
      \left\{
      \begin{aligned}
	&\d_t N\,+\,
	\textrm{div}_{x_\perp}(N\,\nabla_{x_\perp}^\perp\tphi)\,=\,0\,,\\
	&-\delta^2\Delta_x\phi\, =\, \un - N\,e^{\phi-\tphi}
	\,,\qquad \tphi(t,x_\perp)\,=\,\ln\left(\int_\T e^{\phi(t,x_\perp,x_\mypar)}\dD\,x_\mypar\right).
      \end{aligned}
      \right.
      \label{GCPB}
      \ee
      We are thus trying to solve a conservation law corresponding to advection by a divergence-free velocity field build from a "stream function" $\tphi$ obtained by averaging the solution to a second-order elliptic equation. As already pointed out in the introduction once sufficient control on the construction of $\tphi$ from $N$ has been obtained one may follow arguments used for similar equations such as Vlasov-Poisson equations \cite{Glassey,bouchut_2000_kinetic,Rein} or two-dimensional incompressible Euler equations in vorticity formulation \cite{Marchioro-Pulvirenti,Chemin,Majda-Bertozzi}. 
      
      \subsubsection*{Anisotropic Poisson-Boltzmann equation}
      
      Accordingly we first focus on the elliptic equation involved in System~\ref{GCPB}. Since time variable is a simple parameter there, we temporarily omit to mention time dependence along the following considerations. Moreover for concision's sake we do not repeat but always assume $N\geq0$, $\un\geq0$, $\int_{\T^3}\un=\int_{\T^2}N = 1$.
      
      Hence we study the following anisotropic Poisson-Boltzmann equation
      \be
      -\delta^2\Delta_x \phi(x_\perp, x_\mypar)\, =\, \un(x_\perp, x_\mypar) - N(x_\perp)\frac{e^{\phi(x_\perp, x_\mypar)}}{\int_\T e^{\phi(x_\perp, y_\mypar)}\dD y_\mypar}
      \label{PBani}
      \ee
      where  $N$ and $\un$ are nonnegative functions with integral equal to one. The key observation is that Equation~\ref{PBani} is the Euler-Lagrange equation associated to the energy functional 
      \be
      J[\psi]\, =\, \frac{1}{2}\delta^2\int_{\T^3}\left|\nabla\psi\right|^2
      + \int_{\T^2}N\, \ln\left(\int_\T e^{\psi}\, \dD x_\mypar\right)\,\dD x_\perp
      - \int_{\T^3} \un\,\psi\,.
      \label{energ}
      \ee
      Explicitly, at least for smooth pairs $(\phi,\psi)$, 
      \be
      \left.\frac{\dD}{\dD\tau}\right|_{\tau=0}J[\phi + \tau\psi]
      = \int_{\T^3}\left(\delta^2\nabla\phi\cdot\nabla\psi\ +\ N\,\frac{e^{\phi}}{\int_\T e^{\phi}\,\dD x_\mypar} \psi\ - \ \un\ \psi\ \right)\dD x\,
      \label{gateau}
      \ee
      We aim now at identifying an appropriate functional setting in which $J$ is strictly convex and coercive. From this shall stem existence and uniqueness for \eqref{PBani}. Afterwards we focus on corresponding regularity issues.
      
      To do so we introduce 
      \[
      H_0\,=\,\left\{\quad h\in\mathcal{D}'(\T^3)\quad|\quad\nabla h \in L^2(\T^3)\ \textrm{and}\ \int_{\T^3}h = 0\quad\right\}.
      \]
      The set $H_0$ is a closed linear subspace of the Sobolev space $H^1(\T^3)$ and by Mazur's theorem it is also weakly closed. 
      Moreover, by Sobolev embedding and Poincaré inequality $H_0\hookrightarrow L^6(\T^3)$.
      \bpr Assume $N\in L^{4/3}(\T^2)$ and $\un\in L^{6/5}(\T^3)$. Then 
      \begin{enumerate}
	\item[(i)] $J:H_0\rightarrow\R$ is well defined, bounded on bounded sets and coercive;
	\item[(ii)] $J$ is strictly convex;
	\item[(iii)] $J$ is Gâteaux differentiable;
	\item[(iv)] $J$ is weakly lower semi-continuous.
      \end{enumerate}
      \label{propJ}
      \epr
      \begin{proof}
	
	Since $H_0\hookrightarrow L^6(\T^3)$, for some $C$
	\[
	\int_{\T^3} \un |\psi|\dD x
	\,\leq\,\|\un\|_{L^{6/5}(\T^3)}\,\|\psi\|_{L^6(\T^3)}
	\,\leq\, C\,\|\un\|_{L^{6/5}(\T^3)}\,\|\psi\|_{H^1(\T^3)}.
	\]
	Likewise since\footnote{The last embedding follows from Gagliardo-Nirenberg inequality 
	  $$
	  \|\psi(x_\perp,\cdot)\|_{L^\infty(\T)}\leq C\,\|\psi(x_\perp,\cdot)\|_{L^6(\T)}^{3/4}\,
	  \|\psi(x_\perp,\cdot)\|_{H^1(\T)}^{1/4}
	  $$
	  combined with one H\"older inequality. From now on we shall make use of similar anisotropic Sobolev embeddings without further comment.} $H_0\hookrightarrow L^2(\T^2;H^1(\T))\cap L^6(\T^3)\hookrightarrow L^4(\T^2;L^\infty(\T))$, a pointwise estimate on the exponential function shows that
	\[
	\begin{array}{rcl}\ds
	  \int N\left|\ln\left(\int_\T e^{\psi(\cdot,y_\mypar)} \dD y_\mypar\right)\right|
	  &\leq&\ds
	  \|N\|_{L^{4/3}(\T^2)}
	  \left\|\ln\left(\int_\T e^{\psi(\cdot,y_\mypar)} \dD y_\mypar\right)\right\|_{L^4(\T^2)}\\[1em]
	  &\leq&\ds C\,
	  \|N\|_{L^{4/3}(\T^2)}
	  \|\psi\|_{L^4(\T^2;L^\infty(\T))}\\[1em]
	  &\leq&\ds C'\,
	  \|N\|_{L^{4/3}(\T^2)}
	  \|\psi\|_{L^6(\T^3)}^{3/4}
	  \|\psi\|_{L^2(\T^2;H^1(\T))}^{1/4}\\[1em]
	  &\leq&\ds C''\,
	  \|N\|_{L^{4/3}(\T^2)}
	  \|\psi\|_{H^1(\T^3)}
	\end{array}
	\]
	for some $C,C',C''$. Combined with a Poincar\'e inequality this provides 
	\be
	\left|J[\psi] -\frac{1}{2}\delta^2\|\nabla\psi\|^2_{L^2(\T)^3}\right|\leq\, 
	C(\|N\|_{L^{4/3}(\T^2)} + \|\un\|_{L^{6/5}(\T^3)})\|\nabla\psi\|_{L^2(\T^3)},
	\label{coerc}
	\ee
	for some $C$, which proves the first claim.
	
	Thanks to the Hölder inequality it holds, for $\theta\in[0,1]$
	\[
	\int_\T e^{\theta\psi_1 +(1-\theta)\psi_2} \dD x_\mypar\leq \left(\int_\T e^{\psi_1} \dD x_\mypar\right)^\theta\left(\int_\T e^{\psi_2} \dD x_\mypar\right)^{1-\theta}.
	\]
	Therefore the second term of the right-hand side of \eqref{energ} is convex. The first is strictly convex and the last is linear. This proves the second claim.
	
	To prove differentiability, since the first and third term of $J$ are respectively quadratic and linear continuous, one may safely focus on the middle term of $J$. The latter differentiability follows from applying twice the Dominated Convergence Theorem. To do so, we note that when $(\phi,\psi)\in (H_0)^2$ and $(t,s)\in\R^2$, for almost any $x_\perp$
	$$
	\begin{array}{rcl}\ds
	  \Big|\ln\left(\int_\T e^{(\phi+t\psi)(x_\perp,y_\mypar)} \dD y_\mypar\right)
	  &-&\ds
	  \ln\left(\int_\T e^{(\phi+s\psi)(x_\perp,z_\mypar)} \dD z_\mypar\right)\Big|\\[1em]
	  &=&\ds
	  \ln\left(\int_\T e^{(t-s)\psi(x_\perp,y_\mypar)}\,\frac{e^{(\phi+s\psi)(x_\perp,y_\mypar)}\dD y_\mypar}{\int_\T e^{(\phi+s\psi)(x_\perp,z_\mypar)} \dD z_\mypar}\right)\\[1em]
	  &\leq&\ds
	  \ln\left(\int_\T e^{|t-s|\,\|\psi(x_\perp,\cdot)\|_{L^\infty(\T)}}\,\frac{e^{(\phi+s\psi)(x_\perp,y_\mypar)}\dD y_\mypar}{\int_\T e^{(\phi+s\psi)(x_\perp,z_\mypar)} \dD z_\mypar}\right)\\[1em]
	  &\leq&\ds |t-s|\,\|\psi(x_\perp,\cdot)\|_{L^\infty(\T)}
	\end{array}
	$$
	where we have broken the symmetry by assuming that 
	$$
	\int_\T e^{(\phi+s\psi)(x_\perp,z_\mypar)} \dD z_\mypar \leq \int_\T e^{(\phi+t\psi)(x_\perp,y_\mypar)} \dD y_\mypar
	$$
	and used monotonicity of $\ln$ and $\exp$. From this follows the third claim.
	
	The last claim follows for instance from convexity and local boundedness combined with Mazur's theorem.
      \end{proof}
      
      The foregoing proposition yields by classical arguments the following one.
      
      \bpr[Existence and uniqueness]
      For any $N\in L^{4/3}(\T^2)$ and $\un\in L^{6/5}(\T^3)$, Equation~\eqref{PBani} possesses a unique weak solution $\phi\in H_0$. Moreover there exists $C$ such that for any such $(N,\un)$ the solution $\phi$ satisfies
      $$
      \|\phi\|_{H^1(\T^3)}\,\leq\,
      C(\|N\|_{L^{4/3}(\T^2)} + \|\un\|_{L^{6/5}(\T^3)})\,.
      $$
      Besides there exists $C$ such that for any pair $(N,\un)$, $(N',\un')$ of such couples respective solutions $\phi$ and $\phi'$ satisfy
      $$
      \|\phi-\phi'\|_{H^1(\T^3)}\,\leq\,
      C(\|N-N'\|_{L^{4/3}(\T^2)} + \|\un-\un'\|_{L^{6/5}(\T^3)})
      $$
      \label{existPB}
      \epr
      \begin{proof}
	Existence and uniqueness follows from Proposition \ref{propJ} along the classical line of the direct method of calculus of variations. Then the uniform bound stems directly from coercivity estimate \eqref{coerc} and the fact that $J$ takes the value $0$ at the null function. At last, as we explain now, the Lipschitz estimate follows from quantifying convexity and local boundedness. Temporarily, to make it precise, we mark $J$ with suffixes ${N,\un}$. By using that $\phi$ and $\phi'$ are critical points of, respectively, $J_{N,\un}$ and $J_{N',\un'}$ and that each of the three terms defining $J$ is convex, one obtains 
	$$
	\begin{array}{rcl}\ds
	  \delta^2\,\|\nabla(\phi-\phi')\|_{L^2(\T^3)}^2
	  &\leq&\ds 
	  J_{N,\un}(\phi')-J_{N,\un}(\phi)+J_{N',\un'}(\phi)-J_{N',\un'}(\phi')\\[0.5em]
	  &=&\ds
	  \int_{\T^2}(N-N')\, \ln\left(\frac{\int_\T e^{\phi'}\, \dD x_\mypar}{\int_\T e^{\phi}\, \dD x_\mypar}\right)\,\dD x_\perp
	  - \int_{\T^3}(\un-\un')\,(\phi'-\phi)\,\dD x\\[0.5em]
	  &\leq&\ds
	  C'(\|N-N'\|_{L^{4/3}(\T^2)} + \|\un-\un'\|_{L^{6/5}(\T^3)})\,\|\phi-\phi'\|_{H^1(\T^3)}
	\end{array}
	$$
	for some $C'$, where we have used the pointwise bound
	\be
	\ln\left(\frac{\int_\T e^{\phi'(x_\perp,x_\mypar)}\, \dD x_\mypar}{\int_\T e^{\phi(x_\perp,x_\mypar)}\, \dD x_\mypar}\right)
	\,\leq\,\|(\phi-\phi')(x_\perp,\cdot)\|_{L^\infty(\T)}
	\label{pointwisebound}
	\ee
	and the embedding $H_0\hookrightarrow L^4(\T^2;L^\infty(\T))$. From here a Poincar\'e inequality yields the result.
      \end{proof}
      
      Now we discuss regularity of solutions to Equation~\eqref{PBani}. In the next proposition we use classical elliptic regularity properties --- in Calder\'on-Zygmund form --- and maximum principles --- as recalled in Appendix~\ref{s:maxprinc} --- to obtain estimates of $\phi$ in higher order Sobolev spaces. We refer the reader to \cite{Stein_singular-integrals,Stein-Weiss_Fourier} or \cite{Grafakos_I} on the classical Calder\'on-Zygmund regularity theory and to \cite[Chapter~2]{Lemarie-Rieusset} for relevant basic properties of Lorentz spaces appearing in the following.
      \bpr[Regularity]
      There exists $C$ such that if $N\in L^{4/3}(\TT^2)$ and $\un\in L^{3/2,1}(\TT^3)$ then $\phi$ the unique solution to \eqref{PBani} satisfies
      $$
      \|e^\phi\|_{L^\infty(\T^3)}\,+\,\left\|\frac{1}{\int_\T e^{\phi(\cdot, y_\mypar)}\dD y_\mypar}\right\|_{L^\infty(\T^2)}\,\leq\,C\,e^{C\left(\|\un\|_{L^{3/2,1}(\TT^3)}+\|N\|_{L^{4/3}(\TT^2)}\right)}
      $$
      and if moreover $N\in L^{3/2,1}(\TT^2)$
      $$
      \|\phi\|_{L^\infty(\T^3)}\leq 
      C\,\left(\|\un\|_{L^{3/2,1}(\TT^3)}+\|N\|_{L^{3/2,1}(\TT^2)}e^{C\left(\|\un\|_{L^{3/2,1}(\TT^3)}+\|N\|_{L^{4/3}(\TT^2)}\right)}\right)\,.
      $$
      Moreover, for any $1<p<\infty$, there exists $C$ such that if $N\in L^{4/3}(\TT^2)\cap L^p(\TT^2)$ and $\un\in L^{3/2,1}(\TT^3)\cap L^p(\TT^3)$ then $\phi$ the unique solution to \eqref{PBani} satisfies
      $$
      \|\phi\|_{W^{2,p}(\T^3)}\leq 
      C\,\left(\|\un\|_{L^{p}(\TT^3)}+\|N\|_{L^{p}(\TT^2)}e^{C\left(\|\un\|_{L^{3/2,1}(\TT^3)}+\|N\|_{L^{4/3}(\TT^2)}\right)}\right)\,.
      $$
      Besides for any $1<p<\infty$ and any $1\leq q\leq\infty$ 
      such that 
      $$
      \frac{3}{2p}-1<\frac1q\leq\frac1p
      $$
      there exists $C$ such that with 
      $$
      \frac1r\,=\,
      \begin{cases}
	\frac13+\frac1p-\frac{2}{3q}&\quad\textrm{if }\quad q\geq2\\
	\frac12+\frac1p-\frac1q&\quad\textrm{if }\quad q\leq2
      \end{cases}
      $$
      if $N\in L^q(\TT^2)\cap L^r(\TT^2)\cap W^{1,p}(\TT^2)$ and $\un\in L^r(\TT^3)\cap W^{1,p}(\TT^3)$
      then $\phi$ the unique solution to \eqref{PBani} satisfies
      $$
      \begin{array}{rcl}\ds
	\|\phi\|_{W^{3,p}(\T^3)}&\leq&\ds
	C\,\left(\|\un\|_{W^{1,p}(\TT^3)}+\|N\|_{L^{q}(\TT^2)}\|\un\|_{L^{r}(\TT^3)}e^{C\left(\|\un\|_{L^{3/2,1}(\TT^3)}+\|N\|_{L^{4/3}(\TT^2)}\right)}\right)\\[0.5em]
	&+&\ds
	C\,e^{C\left(\|\un\|_{L^{3/2,1}(\TT^3)}+\|N\|_{L^{4/3}(\TT^2)}\right)}
	\left(\|N\|_{W^{1,p}(\TT^2)}+\|N\|_{L^{q}(\TT^2)}\|N\|_{L^{r}(\TT^2)}\right)\,.
      \end{array}
      $$
      \label{regellip}
      \epr
      
      To ease later use of estimates of $\|\phi\|_{W^{3,p}(\T^3)}$ note that for instance the choice $$\frac1q\,=\,\frac12\left(\frac1p+\frac{1}{\max(\{2,p\})}\right)$$ 
      is always available and that in this case $r\leq q$.
      
      \begin{proof}
	Bounds in $L^\infty$ follow from repeated use of the maximum principle stated in Lemma \ref{maxprinc}. Indeed, first, since $L^{3/2,1}(\TT^3)\hookrightarrow L^{6/5}(\TT^3)$ and $-\delta^2\Delta\phi\leq \un$, the bound on $e^\phi$ follows from $0\leq e^{\phi}\leq e^{\esssup \phi}$. In turn the bound on $(\int_\T e^{\phi(x_\perp, y_\mypar)}\dD y_\mypar)^{-1}$ stems from
	$$
	0\leq \frac{1}{\int_\T e^{\phi(x_\perp, y_\mypar)}\dD y_\mypar} \leq 
	e^{-\int_\T \phi(x_\perp, y_\mypar)\dD y_\mypar} \leq
	e^{\esssup(-\int_\T \phi(x_\perp, y_\mypar)\dD y_\mypar)}
	$$
	and $-\Delta_{x_\perp} (-\int_\T \phi\dD x_\mypar)= N-\int_\T \un(\cdot, y_\mypar)\dD y_\mypar$ combined with embeddings $W^{2,4/3}(\T^2)\hookrightarrow L^{\infty}(\TT^2)$ and $L^{3/2,1}(\TT^3)\hookrightarrow L^{4/3}(\TT^2;L^1(\T))$. At last, using the equation, those two estimates yield then estimates on $\phi$ itself both in $L^\infty(\T^3)$ and $W^{2,p}(\T^3)$ respectively by maximum principle and elliptic regularity.
	
	The bound in $W^{3,p}(\T^3)$ may then be obtained by differentiating once the equation. Indeed differentiation yields
	\[
	-\delta^2\Delta\nabla\phi\,=\,\nabla\un - \left(\nabla N + N\nabla\phi - N\nabla\tphi\right)\,e^{\phi-\tphi}
	\]
	where we have introduced notation $\tphi$ defined through nonlinear averaging~\eqref{averaged-phi} for concision's sake. Moreover, by using Lemma~\ref{l:tphi} below to estimate $\tphi$ and H\"older inequalities, we obtain for some $C$ independent of $N$
	\[
	\|N\nabla\phi\|_{L^p(\T^3)} + \|N\nabla\tphi\|_{L^p(\T^3)}\,\leq\,Ce^{C\left(\|\un\|_{L^{3/2,1}(\TT^3)}+\|N\|_{L^{4/3}(\TT^2)}\right)}\|N\|_{L^q(\T^2)}\|\nabla\phi\|_{L^s(\T^2;L^p(\T))}.
	\]
	with $1/s=1/p-1/q$. One achieves the proof of the claim through elliptic regularity and bounds on $\|\phi\|_{W^{2,r}(\T^3)}$ combined with embedding $W^{1,r}(\TT^3)\hookrightarrow L^{s}(\TT^2;L^p(\T))$. Strictly speaking instead of differentiating the equation and estimating as above terms thus appearing we should have applied finite differences to the equation and estimated those finite differences. But here and elsewhere we skip over those cumbersome details. This is particularly immaterial in our analysis since there is no boundary to deal with.
      \end{proof}
      
      With the obtained regularity one may provide higher-order counter-parts to the Lipschitz bound of Proposition~\ref{existPB}.

      \bpr[Lipschitz dependence]
      For any $4/3\leq p<\infty$, there exists $C$ such that if $N\in L^p(\TT^2)$, $N'\in L^p(\TT^2)$ and $\un\in L^{3/2,1}(\TT^3)$ then $\phi$ and $\phi'$ the respective solutions to \eqref{PBani} corresponding to $(N,\un)$ and $(N',\un)$ satisfy
      $$
      \|\phi-\phi'\|_{W^{2,p}(\T^3)}
      \leq C\,\|N-N'\|_{L^p(\T^2)}\,(1+\|N'\|_{L^p(\T^2)})^2e^{C\left(\|\un\|_{L^{3/2,1}(\TT^3)}+\|N\|_{L^{4/3}(\TT^2)}+\|N'\|_{L^{4/3}(\TT^2)}\right)}\,.
      $$
      \label{Lipschitz-dependence}
      \epr
      
      \begin{proof}
	The power $2$ in the above estimate reflects the fact that we provide a three-steps proof to reach the full range of Lebesgue indices $p$. First observe that $H^1(\TT^3)\hookrightarrow L^q(\TT^2;L^2(\T))$ for any $1\leq q<\infty$. Using the equation, we get
	\[
	-\delta^2\Delta(\phi-\phi')\,=\,(N-N')e^{\phi-\tphi} + (e^\phi-e^{\phi'})N'e^{-\tphi} + (e^{-\tphi}-e^{-\tphi'})N'e^{\phi'}
	\]   
	where again we use notation from \eqref{averaged-phi}. By using Lemma~\ref{l:tphi} this enables us to prove that for any $1\leq r<4/3$ there exists $C_r$ such that 
	$$
	\|\phi-\phi'\|_{W^{2,r}(\T^3)}
	\leq C_r\,\|N-N'\|_{L^{4/3}(\T^2)}\,e^{C_r\left(\|\un\|_{L^{3/2,1}(\TT^3)}+\|N\|_{L^{4/3}(\TT^2)}+\|N'\|_{L^{4/3}(\TT^2)}\right)}\,.
	$$
	
	Using once again the equation and the embedding $W^{2,r_p}(\TT^3)\hookrightarrow L^\infty(\TT^2;L^p(\T))$ for a suitable $1\leq r_p<4/3$, one deduces the result (with a bound depending linearly on $\|N'\|_{L^p(\T^2)}$) when $p<4$.
	
	One may then use this intermediate case to conclude the proof by using again the equation and the embedding $W^{2,p_0}(\TT^3)\hookrightarrow L^\infty(\TT^3)$ for some $1\leq p_0<4$ when $p\geq4$.
      \end{proof}
      
      Prior to turning back to Equation~\eqref{GCPB}, we provide estimates converting bounds on a potential $\phi$ into bounds on advection field $\tphi$ obtained through nonlinear averaging~\eqref{averaged-phi}. That is the content of the following lemma. It is of course necessary to deal with \eqref{GCPB} and we have already used it repeatedly in foregoing estimates on $\phi$. 
      
      \bl\label{l:tphi}
      \emph{(i).} For any $\phi\in H_0$, let $\tphi$ be defined by \eqref{averaged-phi}, then for any $1\leq p\leq\infty$
      \[
      \|\tphi\|_{L^p(\T^2)}\ \leq\ \|\phi\|_{L^p(\T^2;L^\infty(\T))}
      \]
      and
      \[
      \|\nabla_{x_\perp}\tphi\|_{L^p(\T^2)}\leq 
      \min(\{\|\nabla_{x_\perp}\phi\|_{L^p(\T^2;L^\infty(\T))},
      \|e^{\phi}\|_{L^\infty(\T^3)}\|e^{-\tphi}\|_{L^\infty(\T^2)}\|\nabla_{x_\perp}\phi\|_{L^p(\T^2;L^1(\T))}\})\,.
      \]
      Moreover there exists $C$ such that for any $\phi$, $p$ and $\tphi$ as above,
      $$
      \|\mathrm{Hess}_{x_\perp}\tphi\|_{L^p(\T^2)}\leq C\,\|e^\phi\|_{L^\infty(\T^3)}^2\|e^{-\tphi}\|_{L^\infty(\T^2)}^2\,
      \left(\|\mathrm{Hess}_{x_\perp}\phi\|_{L^p(\T^2; L^1(\T))} + \|\nabla_{x_\perp}\phi\|_{L^{2p}(\T^2;L^2(\T))}^2\right).
      $$
      \emph{(ii).} For any $\phi,\phi'\in H_0$, let $\tphi$ and $\tphi'$ be defined by nonlinear averaging \eqref{averaged-phi} respectively from $\phi$ and $\phi'$, then for any $1\leq p\leq\infty$
      $$
      \begin{array}{rcl}\ds
	\|\tphi-\tphi'\|_{L^p(\T^2)}&\leq&\ds
	\min(\{\|\phi-\phi'\|_{L^p(\T^2;L^\infty(\T))},\\
	&&\ds
	(\|e^\phi\|_{L^\infty(\T^3)}+ \|e^{\phi'}\|_{L^\infty(\T^3)})
	(\|e^{-\tphi}\|_{L^\infty(\T^2)}+ \|e^{-\tphi'}\|_{L^\infty(\T^2)})\|\phi-\phi'\|_{L^p(\T^2;L^1(\T))}\})
      \end{array}
      $$
      and
      $$
      \begin{array}{rcl}\ds
	\hspace{-1em}\|&\hspace{-1em}\nabla_{x_\perp}&\hspace{-1em}(\tphi-\tphi')\|_{L^p(\T^2)}\\[0.5em] 
	&\leq&\ds
	\min(\{\|\nabla_{x_\perp}(\phi-\phi')\|_{L^p(\T^2;L^\infty(\T))},
	\|e^{\phi}\|_{L^\infty(\T^3)}\|e^{-\tphi}\|_{L^\infty(\T^2)}\|\nabla_{x_\perp}(\phi-\phi')\|_{L^p(\T^2;L^1(\T))}\})\\[0.5em]
	&\hspace{-1em}+\hspace{-1em}&\ds\hspace{-1em}(\|e^\phi\|_{L^\infty(\T^3)}+ \|e^{\phi'}\|_{L^\infty(\T^3)})
	(\|e^{-\tphi}\|_{L^\infty(\T^2)}+ \|e^{-\tphi'}\|_{L^\infty(\T^2)})
	\|\nabla_{x_\perp}\phi'\|_{L^p(\T^2;L^1(\T))}\|\phi-\phi'\|_{L^p(\T^2;L^\infty(\T))}\,.
      \end{array}
      $$
      \el
      
      To ease subsequent use of the lemma we stress that
      \begin{itemize}
	\item $W^{2,p}(\T^3)\hookrightarrow W^{1,2p}(\T^2;L^2(\T))$ provided that $p\geq 4/3$;
	\item $W^{3,q}(\T^3)\hookrightarrow W^{2,p}(\T^2;L^1(\T))\cap W^{1,2p}(\T^2;L^2(\T))$ provided that $1/q\leq 1/2+1/p$ when $p<\infty$ and that $1/q<1/2$ when $p=\infty$;
	\item $W^{2,q}(\T^3)\hookrightarrow L^p(\T^2;L^{\infty}(\T))\cap W^{1,p}(\T^2;L^1(\T))$ provided that $1/q\leq 1/2+1/p$ when $p<\infty$ and that $1/q<1/2$ when $p=\infty$.
      \end{itemize}
      
      \begin{proof}
	Since  $H^1(\T^3)\hookrightarrow L^4(\T^2; L^\infty(\T))$, $\tphi$ is well-defined. Moreover, the first estimate follows directly from a pointwise bound and the second from the triangle inequality for the $L^p(\T^2)$-norm. Concerning the latter note that indeed 
	\[
	\nabla_{x_\perp}\tphi(x_\perp)\, =\, e^{-\tphi(x_\perp)}\,
	\int_\T e^{\phi(x_\perp,y_\mypar)}\,\nabla_{x_\perp}\phi(x_\perp,y_\mypar)\ \dD y_\mypar\,.
	\]
	Likewise
	$$
	\begin{array}{rcl}\ds
	  \text{Hess}_{x_\perp}(\tphi)(x_\perp)
	  &=&\ds
	  e^{-\tphi(x_\perp)}\,
	  \int_\T e^{\phi(x_\perp,y_\mypar)}\,(\text{Hess}_{x_\perp}(\phi)+\nabla_{x_\perp}\phi\otimes\nabla_{x_\perp}\phi)(x_\perp,y_\mypar)\ \dD y_\mypar\\[0.5em]
	  &-&\ds
	  e^{-2\tphi(x_\perp)}\,
	  \iint_{\T\times\T} e^{\phi(x_\perp,y_\mypar)+\phi(x_\perp,z_\mypar)}\,\nabla_{x_\perp}\phi(x_\perp,y_\mypar)\otimes\nabla_{x_\perp}\phi(x_\perp,z_\mypar)\ \dD y_\mypar\dD z_\mypar\,.
	\end{array}
	$$
	implies the third inequality. This achieves the proof of (i).
	
	Concerning the first inequality of (ii), the $L^p(\T^2;L^\infty(\T))$ estimate follows directly from \eqref{pointwisebound}. To prove the second part of this first inequality we first observe that, breaking the symmetry as in the proof of \eqref{pointwisebound} by assuming $\tphi'(x_\perp)\leq\tphi(x_\perp)$, one obtains
	$$
	\begin{array}{rcl}\ds
	  |\tphi(x_\perp)-\tphi'(x_\perp)|
	  &\leq&\ds
	  \ln\left(1+e^{-\tphi(x_\perp)}\int_\T ((\phi-\phi')e^{\phi'})(x_\perp,x_\mypar)\, \dD x_\mypar\right)\\
	  &\leq&\ds\|e^{\phi'}\|_{L^\infty(\T^3)}\|e^{-\tphi}\|_{L^\infty(\T^2)}
	  \|(\phi-\phi')(x_\perp,\cdot)\|_{L^1(\T)}
	\end{array}
	$$
	by using both $(\forall z\in\R,\quad e^z\leq 1+ze^z)$ and $(\forall x\in\R_+^*,\quad \ln(x)\leq x-1)$. Integrating a symmetrized form of the foregoing inequality achieves the proof of the first inequality of (ii). The second one follows almost readily from
	$$
	\begin{array}{rcl}\ds
	  \nabla_{x_\perp}\tphi(x_\perp)-\nabla_{x_\perp}\tphi'(x_\perp)&=&\ds
	  e^{-\tphi(x_\perp)}\,
	  \int_\T e^{\phi(x_\perp,y_\mypar)}\,\nabla_{x_\perp}(\phi-\phi')(x_\perp,y_\mypar)\ \dD y_\mypar\\
	  &+&\ds
	  (e^{-\tphi(x_\perp)}-e^{-\tphi'(x_\perp)})\,
	  \int_\T e^{\phi(x_\perp,y_\mypar)}\,\nabla_{x_\perp}\phi'(x_\perp,y_\mypar)\ \dD y_\mypar\\
	  &+&\ds
	  e^{-\tphi'(x_\perp)}\,
	  \int_\T (e^{\phi(x_\perp,y_\mypar)}-e^{\phi'(x_\perp,y_\mypar)})\,\nabla_{x_\perp}\phi'(x_\perp,y_\mypar)\ \dD y_\mypar\,.
	\end{array}
	$$
      \end{proof}
      
      \subsubsection*{Existence of solutions}
      
      \begin{proof}[Proof of Theorem \ref{th:strong} for light particles] 
	The strong framework allows to work with Lipschitz fields $\nabla^\perp \tphi$ and propagate $W^{1,p}(\T^2)$ bounds. The other key observation is that it is also sufficient to prove a Lipschitz bound in low-order norms. For instance, with constants depending only on bounds on $\|N\|_{L^{4/3}(\T^2)}$, $\|N'\|_{L^{4/3}(\T^2)}$ and $\|\un\|_{L^{3/2,1}(\TT^3)}$
	$$
	\begin{array}{rcl}\ds
	  \|(\nabla^\perp\tphi-\nabla^\perp\tphi')\cdot\nabla N'\|_{L^{4/3}(\T^2)}
	  &\leq&\ds
	  C \|\nabla(\tphi-\tphi')\|_{L^4(\T^2)}\|\nabla N'\|_{L^2(\T^2)}\\
	  &\leq&\ds
	  C'\|\phi-\phi'\|_{W^{2,4/3}(\T^3)}\|\nabla N'\|_{L^2(\T^2)}\\
	  &\leq&\ds
	  C''\|N-N'\|_{L^{4/3}(\T^2)}\|\nabla N'\|_{L^2(\T^2)}\,
	\end{array}
	$$
	using Proposition \ref{Lipschitz-dependence} and Lemma \ref{l:tphi} and since $W^{2,4/3}(\T^3)\hookrightarrow W^{1,4}(\T^2;L^1(\T))\cap L^4(\T^2;L^\infty(\T))$. This is sufficient to prove uniqueness (even of weak-strong type) and convergence of existence schemes.
	
	As it is fairly classical we only sketch the main steps of the proof. The uniqueness follows for instance from the fact that if $N$ and $N'$ are two solutions starting with the same initial data then on any compact interval $I_0$ containing zero on which both $N$ and $N'$ are defined
	$$
	\d_t (N-N')+\nabla^\perp\tphi\cdot\nabla(N-N')=-(\nabla^\perp\tphi-\nabla^\perp\tphi')\cdot\nabla N'
	$$
	so that for any $t\in I_0$
	$$
	\|(N-N')(t,\cdot)\|_{L^{4/3}(\T^2)}\leq C\left|\int_0^t \|N(s,\cdot)-N'(s,\cdot)\|_{L^{4/3}(\T^2)}\dD s\right|
	$$
	for some $C$ depending on $N$ and $N'$. Then $N=N'$ follows from the Gronwall lemma. Existence on an interval of controlled length may be proved by showing convergence of the scheme
	$$
	\d_t N_{n+1}+\nabla^\perp\tphi_n\cdot\nabla N_{n+1}\,=\,0\,,\quad n\geq0\,,
	$$ 
	where $\tphi_{-1}=0$, with $N_{n+1}(0,\cdot)=N^\text{in}$. On any interval of time $I$ containing zero the divergence-free structure of the vector field yields bounds on $(N_n)$ in $L^\infty (I;L^\infty(\T^2))$, independent of $I$, by $\|N^\text{in}\|_{L^\infty(\T^2)}$. Then, on such an interval, by differentiating the equation and using Lemma~\ref{l:tphi} and Proposition~\ref{regellip} one obtains (with obvious modification when $n=0$) for any $t\in I$
	\[
	\begin{array}{rcl}\ds
	  \|\nabla_{x_\perp}N_{n+1}\|_{L^p(\T^2)}
	  &\leq&\ds
	  \|\nabla_{x_\perp}N^\text{in}\|_{L^p(\T^2)}
	  +\left|\int_0^t\|\tphi_n(s,\cdot)\|_{W^{2,\infty}(\T^2)}\|\nabla_{x_\perp}N_{n+1}(s,\cdot)\|_{L^p(\T^2)}\,\dD s\right|\\
	  &\leq&\ds
	  \|\nabla_{x_\perp}N^\text{in}\|_{L^p(\T^2)}
	  +C\left|\int_0^t\|N_n(s,\cdot)\|_{W^{1,p}(\T^2)}^2\|\nabla_{x_\perp}N_{n+1}(s,\cdot)\|_{L^p(\T^2)}\,\dD s\right|.
	\end{array}
	\]
	provided that $2<p<\infty$ and for some $C$ depending only on $p$, $\|N^\text{in}\|_{L^\infty(\T^2)}$ and $\|\un\|_{L^\infty(I;W^{1,p}(\T^3))}$. Therefore, on a suitably short interval $I$ one may also obtain uniform bounds of $(N_n)$ in $L^\infty (I;W^{1,p}(\T^2))$ where both the bound on $(N_n)$ and the smallness of $I$ are only constrained by the sizes of $\|N^\text{in}\|_{W^{1,p}(\T^2)}$ and of $\|\un\|_{L^\infty(I;W^{1,p}(\T^3))}$. Now, by already expounded arguments, on the same intervals one may prove convergence of $(N_n)$ in $\cC(I;L^{4/3}(\T^2))$ to some $N$. One then concludes the proof of the claimed existence by upgrading this convergence through interpolation with uniform bounds from Proposition \ref{regellip} and taking limits in the equation using Lipschitz bounds on maps $N\mapsto\phi$ and $\phi\mapsto\tphi$ from Proposition \ref{Lipschitz-dependence} and Lemma \ref{l:tphi}. At last one achieves the proof by combining the strong uniqueness statement and existence on a controlled interval with classical arguments leading to the existence of a maximal solution.
      \end{proof}

      \begin{proof}[Proof of Theorem \ref{th:weak} for light particles] 
	While a suitable Lipschitz bound is the key step in proving Theorem~\ref{th:strong}, here the main issue is the stability of some approximate solutions $(N_\eps)$ and their compactness in compatible norms. With this respect, one key observation is that in the Lipschitz bound of Proposition~\ref{existPB} one may replace the $L^{4/3}$-norm with a $W^{-1/2,2}$-norm. This follows from
	$$
	\begin{array}{rcl}\ds
	  \Big|\ln\left(\frac{\int_\T e^{\phi'(x_\perp,x_\mypar)}\, \dD x_\mypar}{\int_\T e^{\phi(x_\perp,x_\mypar)}\, \dD x_\mypar}\right)
	  &-&\ds\ln\left(\frac{\int_\T e^{\phi'(x_\perp',x_\mypar)}\, \dD x_\mypar}{\int_\T e^{\phi(x_\perp',x_\mypar)}\, \dD x_\mypar}\right)\Big|\\[0.5em]
	  &\leq&\ds
	  \left|\|(\phi-\phi')(x_\perp,\cdot)\|_{L^\infty(\T)}-\|(\phi-\phi')(x_\perp',\cdot)\|_{L^\infty(\T)}\right|
	\end{array}
	$$
	and the embedding $H_0\hookrightarrow W^{1,2}(\T^2;L^2(\T))\cap L^2(\T^2;W^{1,2}(\T))\hookrightarrow W^{1/2,2}(\T^2;L^\infty(\T))$ that yield
	$$
	\left\|\ln\left(\frac{\int_\T e^{\phi'(\cdot,x_\mypar)}\, \dD x_\mypar}{\int_\T e^{\phi(\cdot,x_\mypar)}\, \dD x_\mypar}\right)\right\|_{W^{1/2,2}(\T^2)}
	\leq C\,\|\phi-\phi'\|_{H^1(\T^3)}
	$$
	for some universal $C$.
	
	To be more specific, let us discuss how to prove existence by a compactness argument on the family of solutions $(N_\eta)$ to
	$$
	\d_t N_\eta+\nabla^\perp\tphi_\eta\cdot\nabla N_\eta\,=\,0\,,\quad \eta>0\,,
	$$
	starting from $N^\text{in}$ where $\tphi_\eta$ is obtained from $N_\eta$ through
	$$
	-\delta^2\Delta_x\phi_\eta\, =\, \un - N_\eta\frac{e^{\phi_\eta}}{\int_\T e^{\phi_\eta(\cdot,\cdot,y_\mypar)}\dD y_\mypar}
	\,,\qquad \tphi_\eta(t,x_\perp)\,=\,\zeta_\eta\,*\,\ln\left(\int_\T e^{\phi_\eta(t,x_\perp,x_\mypar)}\dD\,x_\mypar\right)
	$$
	where $(\zeta_\eta)$ is an approximation of unity. The existence of $(N_\eta)$ may be obtained by an argument similar but much simpler than the one leading to Theorem~\ref{th:strong} with the crucial modification that one does not need to restrict the interval of existence to obtain needed bounds (which are of course $\eta$-dependent). 
	
	Now, the divergence-free property of vector fields $(\nabla^\perp\tphi_\eta)$ provide control of $(N_\eta)$ in $L^\infty(\R;L^p(\T^2))$ uniformly in $\eta$. Then bounds in $W_{loc}^{1,\infty}(\R;W^{-1,q}(\T^2))$ with $1/q = 2/p-1/2$ if $p<2$, any $1\leq q<2$ if $p=2$ and $q=p$ if $p>2$ may be derived directly from the equation in conservative form that implies for any compact interval $I$ and any $t\in I$
	$$
	\|\d_tN_\eta(t,\cdot)\|_{W^{-1,q}(\T^2)}\leq
	\|\nabla_{x_\perp}\tphi_\eta(t,\cdot)\|_{L^r(\T^2)}\|N_\eta(t,\cdot)\|_{L^p(\T^2)}
	\leq C\,.
	$$
	with $1/r=1/q-1/p$ for some constant $C$ depending only on $\|N^\text{in}\|_{L^p(\T^2)}$, $\|\un\|_{L^\infty(I;L^p(\T^3)\cap L^{3/2,1}(\T^3))}$ (and $q$ if $p=2$). Therefore, compactness of $(N_\eta)$ in $\cC(\R_+;W^{-1/2,2}(\T^2))$ may be obtained by interpolation between boundedness in $L^p(\T^2)$ and Lipschitz bounds in $W^{-1,q}(\T^2)$ with $q$ as above. The stability argument mentioned above translates this into convergence of $(\tphi_\eta)$ in $\cC(\R_+;H^1(\T^2))$. Then interpolation with uniform bounds respectively in $L^p(\T^2)$ and $W^{2,p}(\T^2)$ provides convergence in norms sufficient to take limits in the equations. 
      \end{proof}
      \subsection{Heavy particles}\label{s:heavy}

      Now, consistently with modeling considerations, for reading convenience we analyze the heavy particle case setting $\sigma=1$. Thus we study the system
      \be
      \left\{
      \begin{aligned}
	&\d_t N\,+\,
	\textrm{div}_{x_\perp}(N\,\nabla_{x_\perp}^\perp\tphi)\,=\,0\,,\\
	&-\delta^2\Delta_x\phi\, =\frac{Ne^{-\phi}}{\int_\T e^{-\phi(\cdot,\cdot,y_\mypar)}\dD y_\mypar} - \frac{e^{\phi}}{\int_{\T^3} e^{\phi(\cdot,y)}\dD\,y}
	\,,\quad \tphi(t,x_\perp)\,=-\ln\left(\int_\T e^{-\phi(t,x_\perp,x_\mypar)}\dD\,x_\mypar\right).
      \end{aligned}
      \right.
      \label{GCPB-heavy}
      \ee

      As in the previous subsection we focus on the elliptic equation involved in System~\ref{GCPB-heavy} and while doing so we temporarily omit to mention time dependence and do not always repeat but always assume $N\geq0$, $\int_{\T^2}N=1$.
      
      Hence we study the following mixed Poisson-Boltzmann equation
      \be
      -\delta^2\Delta_x \phi(x_\perp, x_\mypar)\, =\,N(x_\perp)\frac{e^{-\phi(x_\perp, x_\mypar)}}{\int_\T e^{-\phi(x_\perp, y_\mypar)}\dD y_\mypar}-
      \frac{e^{\phi(x_\perp, x_\mypar)}}{\int_{\T^3} e^{\phi(y)}\dD y}
      \label{PBmixed}
      \ee
      where  $N$ is a nonnegative function with integral equal to one. Our first observation is that Equation~\ref{PBmixed} is the Euler-Lagrange equation associated with the energy functional 
      \be
      J[\psi]\, =\, \frac{1}{2}\delta^2\int_{\T^3}\left|\nabla\psi\right|^2 
      - \int_{\T^2}N\, \ln\left(\int_\T e^{-\psi}\, \dD x_\mypar\right)\,\dD x_\perp
      + \ln\left(\int_{\T^3} e^{\psi}\,\right)\,.
      \label{energ-heavy}
      \ee

      Again we consider 
      \[
      H_0\,=\,\left\{\quad h\in\mathcal{D}'(\T^3)\quad|\quad\nabla h \in L^2(\T^3)\ \textrm{and}\ \int_{\T^3}h = 0\quad\right\}.
      \]
      With minor modifications on the proof of Proposition~\ref{propJ}, one obtains the following.
      
      \bpr Assume $N\in L^{4/3}(\T^2)$. Then
      \begin{enumerate}
	\item[(i)] $J:H_0\rightarrow\R\cup\{+\infty\}$ is well defined, bounded by below on bounded sets and coercive;
	\item[(ii)] $J$ is strictly convex;
	\item[(iii)] $J$ is Gâteaux differentiable at $\phi$ in the direction $\psi-\phi$ for any $(\phi,\psi)\in (H_0)^2$ such that $J(\phi)<+\infty$ and $J(\psi)<+\infty$ ;
	\item[(iv)] $J$ is weakly lower semi-continuous.
      \end{enumerate}
      \label{propJ-heavy}
      \epr
      
      \bpr[Existence and uniqueness]
      For any $N\in L^{4/3}(\T^2)$, Equation~\eqref{PBmixed} possesses a unique weak solution $\phi\in H_0$ such that $\int_{\T^3}e^\phi<+\infty$. Moreover there exists $C$ such that for any such $N$ the solution $\phi$ satisfies
      $$
      \|\phi\|_{H^1(\T^3)}^2\,+\,\ln\left(\int_{\T^3}e^\phi\right)\,\leq\,
      C\,\|N\|_{L^{4/3}(\T^2)}^2\,.
      $$
      Besides there exists $C$ such that for any pair $N$, $N'$ of such functions respective solutions $\phi$ and $\phi'$ satisfy
      $$
      \|\phi-\phi'\|_{H^1(\T^3)}\,\leq\,
      C\,\|N-N'\|_{L^{4/3}(\T^2)}
      $$
      \label{existPB-heavy}
      \epr
      
      To tight more closely both propositions note that
      $$
      J^{-1}(\R)\,=\,\left\{\quad h\in\mathcal{D}'(\T^3)\quad|\quad\nabla h \in L^2(\T^3)\,,\ \int_{\T^3}h = 0\quad\textrm{and}\quad\int_{\T^3}e^\phi<+\infty\quad\right\}\,.
      $$
      
      Likewise one may also derive qualitative properties of solutions by paralleling the light particle case. However instead we retain only uniqueness from the foregoing propositions and as already mentioned in the introduction we observe that $\phi$ is actually independent of its third argument and may be obtained by solving
      \be
      -\delta^2\Delta_{x_\perp} \phi(x_\perp)\, =\,N(x_\perp)-
      \frac{e^{\phi(x_\perp)}}{\int_{\T^2} e^{\phi(y)}\dD y}\,.
      \label{PBmixed-bis}
      \ee
      To justify this claim it is sufficient now to prove that \eqref{PBmixed-bis} possesses a solution. 
      
      Equation~\eqref{PBmixed-bis} is a classical Poisson-Boltzmann equation. The equation itself and various generalizations have been studied thoroughly elsewhere and the reader is referred to \cite{Bouchut_VP_small-electron-mass,Dolbeault_stationary-states,CDMS} as entering gates to the extensive relevant literature. Here we merely apply slight variations on the arguments of the light species case to obtain the following propositions. Generally speaking the modified arguments turn to be actually simpler than the original ones. Implicitly they involve the consideration of 
      $$
      J_\perp[\psi]\, =\, \frac{1}{2}\delta^2\int_{\T^2}\left|\nabla\psi\right|^2 \dD x
      - \int_{\T^2}N\, \psi
      + \ln\left(\int_{\T^2} e^{\psi}\right)
      $$
      on
      \[
      H_\perp\,=\,\left\{\quad h\in\mathcal{D}'(\T^2)\quad|\quad\nabla h \in L^2(\T^2)\ \textrm{and}\ \int_{\T^2}h = 0\quad\right\}.
      \]
      
      \bpr[Existence and uniqueness]
      For any\footnote{At this level of regularity in the definition of $J_\perp$ the term involving $N$ should be interpreted as a duality pairing.} $N\in H^{-1}(\T^2)$, Equation~\eqref{PBmixed-bis} possesses a unique weak solution $\phi\in H_\perp$ such that $\int_{\T^2}e^\phi<+\infty$. Moreover there exists $C$ such that for any such $N$ the solution $\phi$ satisfies
      $$
      \|\phi\|_{H^1(\T^2)}^2\,+\,\ln\left(\int_{\T^2}e^\phi\right)\,\leq\,
      C\,\|N\|_{H^{-1}(\T^2)}^2\,.
      $$
      Besides there exists $C$ such that for any pair $N$, $N'$ of such functions respective solutions $\phi$ and $\phi'$ satisfy
      $$
      \|\phi-\phi'\|_{H^1(\T^2)}\,\leq\,
      C\,\|N-N'\|_{H^{-1}(\T^2)}
      $$
      \label{existPB-heavy-bis}
      \epr

      \bpr[Regularity]
      Let $1<p_0\leq\infty$. There exists $C$ such that if $N\in L^{p_0}(\TT^2)$ then $\phi$ the unique solution to \eqref{PBmixed-bis} satisfies
      $$
      \|\phi\|_{L^\infty(\T^2)}\leq 
      C\,\left(1+\|N\|_{L^{p_0}(\TT^2)}\right)\,.
      $$
      Moreover, for any $1<p<\infty$, there exists $C$ such that if $N\in L^{p_0}(\TT^2)\cap L^p(\TT^2)$ then $\phi$ the unique solution to \eqref{PBmixed} satisfies
      $$
      \|\phi\|_{W^{2,p}(\T^2)}\leq 
      C\,\left(e^{C\,\|N\|_{L^{p_0}(\TT^2)}}+\|N\|_{L^p(\TT^2)}\right)\,.
      $$
      Besides for any $1<p<\infty$ and any $1<q<\infty$ such that $\frac1q\leq\frac1p+\frac12$
      there exists $C$ such that with if $N\in L^q(\TT^2)\cap L^{p_0}(\TT^2)\cap W^{1,p}(\TT^2)$ 
      then $\phi$ the unique solution to \eqref{PBmixed} satisfies
      $$
      \|\phi\|_{W^{3,p}(\T^3)}\,\leq\,
      C\,\Big(\|N\|_{W^{1,p}(\TT^2)}
      \,+\,\Big(\|N\|_{L^{q}(\TT^2)}+1\Big)\,e^{C\,\|N\|_{L^{p_0}(\TT^2)}}\Big)\,.
      $$
      \label{regellip-heavy}
      \epr
      \begin{proof}
	On one hand we first observe that $-\delta^2 \Delta\phi\leq N$ so that Lemma~\ref{maxprinc-2D} and the coercivity estimate of Proposition~\ref{existPB-heavy-bis} provides the needed upper bound on $\phi$. On the other hand we also have $-\delta^2 \Delta\phi\geq -e^\phi$. Now note that on any bounded $\Omega'$ one may chose $c_0\in\R$ to ensure that $-e^{\|\cdot\|^2+c_0}\geq -\delta^2\Delta(\|\cdot\|^2+c_0)$. With suitable $\Omega$, $\Omega'$ and $c_0$ one may then apply Lemma~\ref{maxprinc-2D-bis} to $u=\|\cdot\|^2+c_0-\phi$ with $K_0=0$. Combined with the coercivity estimate of Proposition~\ref{existPB-heavy} this yields the claimed upper bound on $\phi$ thus achieves the proof of the estimate on $\|\phi\|_{L^\infty(\R)}$.
	
	From here the proof is achieved along the lines of the proof of the light species case.
      \end{proof}
      
      \bpr[Lipschitz dependence]
      For any $1< p<\infty$ and $1<p_0\leq\infty$, there exists $C$ such that if $N\in L^p(\TT^2)\cap L^{p_0}(\TT^2)$, $N'\in L^p(\TT^2)\cap L^{p_0}(\TT^2)$ then $\phi$ and $\phi'$ the respective solutions to \eqref{PBmixed-bis} corresponding to $N$ and $N'$ satisfy
      $$
      \|\phi-\phi'\|_{W^{2,p}(\T^3)}
      \,\leq\,
      C\,\Big(\|N-N'\|_{L^p(\T^2)}\,+\,
      \|N\|_{H^{-1}(\TT^2)}
      \left(e^{C\,\|N\|_{L^{p_0}(\TT^2)}}+e^{C\,\|N'\|_{L^{p_0}(\TT^2)}}\right)\Big)\,.
      $$
      \label{Lipschitz-dependence-heavy}
      \epr

      From here proofs of Theorems~\ref{th:strong} and \ref{th:weak} in the ionic case follow arguments completely parallel to those of the electronic case that we omit here.
      
      \section{Mathematical justification of the limiting process}\label{s:maths}
      In this section, we investigate the rigorous derivation of the asymptotic model \eqref{guiding-center}-\eqref{averaged-phi} from original equation \eqref{FPf} in a linear setting, that is when electric forces derive from an external electric potential.
      
      To do so, we provide in Subsection~\ref{s:apriori} an existence result and corresponding \textit{a priori} estimates yielding bounds on solutions uniform with respect to $\eps$. In Subsection~\ref{s:compactness} we prove that those bounds provide enough compactness on the $\eps$-family of solutions to justify formal asymptotic arguments. This leads to Theorem~\ref{mainCV}, the first main result of this section. 
      
      In the last subsection, we investigate further the asymptotic process yielding convergence of the macroscopic density to a limiting anisotropic Boltzmann-Gibbs density. Specifically we provide decay rates in terms of $\eps$ and the exponent $\alpha$. To achieve this task, we use a different functional setting that is more demanding on the localization of considered initial data but classical for the Fokker-Planck operator. Our task is then achieved by designing an anisotropic hypocoercive strategy based on a suitable global Lyapunov functional. This results in Theorem~\ref{th:hypo}.  
      
      \subsection{\textit{A priori} estimates}\label{s:apriori}
      
      We have already encountered in the formal derivation section, either implicitly or explicitly, the two main \textit{a priori} estimates at our disposal, conservation of mass
      $$
      \iint_{\T^3\times\R^3} f^\eps(t,x,v)\,\dD x\dD v
      \,=\,\iint_{\T^3\times\R^3} f_0(x,v)\,\dD x\dD v
      $$
      obtained by integrating \eqref{FPf} and free energy dissipation \eqref{freeenerg}  obtained by setting $H(h)=h\ln(h)$ in \eqref{e:H} 
      $$
      \begin{array}{rcl}\ds
	\iint_{\T^3\times\R^3}
	f^\eps(t,x,v)&&\ds\hspace{-1em}
	\ln\left(\frac{f^\eps(t,x,v)}{\cM(t,x,v)}\right)\,\dD x\dD v
	\ +\ 4\,\eps^{-(1+\alpha)}\,\int_0^t\iint_{\T^3\times\R^3} \left|\nabla_v\sqrt{\frac{f^\eps(s,x,v)}{M(v)}}\right|^2\,M(v)\,\dD x\dD v\,\dD s\\
	&\hspace{-2em}=&\ds\hspace{-1em}
	\iint_{\T^3\times\R^3} f_0(x,v)\ln\left(\frac{f_0(x,v)}{\cM(0,x,v)}\right)\,\dD x\dD v
	\,-\,
	\sigma\int_0^t\iint_{\T^3\times\R^3} \d_t\phi(s,x)\,f^\eps(s,x,v)\,\dD x\dD v\,\dD s
      \end{array}
      $$
      where $\cM$ is the global Maxwellian defined in \eqref{BG} that has integral equal to one by our normalization\footnote{Note that this differs from normalization in Section~\ref{s:WP}.} \eqref{phi-normalized} of $\phi$. Note however that, as is customarily expected, our weak solutions will satisfy only a relaxed form of the foregoing free energy dissipation equality.
      
      To unfold controls provided by the above estimates we recall the following well-known lemma.
      
      \bl\label{l:log}
      There exists a constant $C$ such that for any integrable $f:\T^3\times\R^3\to \R_+^*$ and any bounded $\phi:\R_+\times\T^3\to\R$
      $$
      \begin{array}{rcl}\ds
	\hspace{-0.25em}\iint_{\T^3\times\R^3}&&\hspace{-2em}\ds(f(x,v)
	\,|\ln(f(x,v))|\,+\,|v|^2\,f(x,v))\dD x\dD v\\[0.5em]
	\hspace{-1em}&\hspace{-1.5em}\leq&\hspace{-1em}\ds
	C\,\Big((1+\ln_-(\|f\|_{L^1(\T^3\times\R^3)})+\|\phi\|_{L^\infty(\T^3)})\,\|f\|_{L^1(\T^3\times\R^3)}
	\,+\hspace{-0.5em}\iint_{\T^3\times\R^3}
	f(x,v)\ln\left(\frac{f(x,v)}{e^{-\sigma\,\phi(x)}\,M(v)}\right)\,\dD x\dD v\Big)\,.
      \end{array}
      $$
      \el
      
      \begin{proof}
	We start by expanding
	$$
	\iint_{\T^3\times\R^3}
	f(x,v)\ln\left(\frac{f(x,v)}{e^{-\sigma\,\phi(x)}\,M(v)}\right)\,\dD x\dD v
	$$
	into
	$$
	\iint_{\T^3\times\R^3}
	(\,|\ln(f(x,v))|\,+\,|v|^2\,+\,\sigma\,\phi(x)\,+\,\tfrac32\,\ln(2\pi)\,)\,f(x,v)\dD x\dD v
	\,-\,2\,\iint_{\T^3\times\R^3}f(x,v)\,\ln_- f(x,v)\dD x\dD v
	$$
	where $(\cdot)_-$ denotes negative part. Then we note that for any positive $u$, $u_0$, 
	$$
	u\ln_-(u)\ \leq\ \begin{cases}
	  \quad u\ln_-(u_0)\quad&\quad\textrm{if }u\geq u_0\\
	  \quad C\,\sqrt{u_0}\quad&\quad\textrm{if }u\leq u_0
	\end{cases}
	$$
	where $C=\max_{w\in\R_+^*} \sqrt{w}\ln_-(w)$. Applying the above to $u=f(x,v)$, $u_0=C_0e^{-\beta\,|v|^2}$ for each fixed $(x,v)$ yields for any $\beta>0$, $C_0>0$
	$$
	\iint_{\T^3\times\R^3}f(x,v)\,\ln_- f(x,v)\dD x\dD v
	\,\leq\,
	C\sqrt{C_0}\iint_{\T^3\times\R^3}e^{-\beta\,|v|^2/2}\dD x\dD v + \iint_{\T^3\times\R^3}(\beta|v|^2+\ln_-(C_0))f(x,v)\dD x\dD v.
	$$
	We conclude by choosing first some $\beta\in(0,1/2)$ then optimizing in $C_0$.
      \end{proof}
      
      Relying on the free energy dissipation one may then obtain the following existence result.
      
      \bpr
      Assume that $\phi\in W^{1,1}_{loc}(\R_+;L^\infty(\T^3))$ and $f_0$ is such that
      \[
      f_0\geq 0,\quad \iint_{\T^3\times\R^3}\left(1 + |v|^2 + \ln_+ f_0\right)\,f_0\, \dD x \dD v<\infty\,.
      \]
      Then there exists a unique $f^\eps\in \cC(\R_+, L^1(\T^3\times\R^3))$ solving Equation~\eqref{FPf} and starting from $f_0$ at time $0$. Moreover there exists $C$ such that for any such $(\phi,f_0)$ this solution additionally satisfies for any $t\geq0$
      \begin{multline}
	\iint_{\T^3\times\R^3}\left(1 + |v|^2 + \left|\ln f^\eps(t,x,v)\right|\right)\,f^\eps(t,x,v)\, \dD x\, \dD v\ +\ 4\,\eps^{-(1+\alpha)}\,\int_0^t\iint_{\T^3\times\R^3} \left|\nabla_v\sqrt{\frac{f^\eps}{M}}\right|^2\,M\\
	\leq C\,\Big[\left(1+\ln_-(\|f_0\|_{L^1(\T^3\times\R^3)})+ \|\phi\|_{W^{1,1}((0,t);L^\infty(\T^3))}\right)\,\|f_0\|_{L^1(\T^3\times\R^3)}\\
	\,+\,\iint_{\T^3\times\R^3}\left(|v|^2 + \ln_+ f_0(x,v)\right)\,f_0(x,v)\, \dD x \dD v\Big]\,.
	\label{mainapriori}
      \end{multline}
      \label{existFPlin}
      \epr
      
      Bounds on free energy provide enough compactness to reduce the proof of the existence part of the proposition to an existence proof for a dense subset of initial data. Moreover uniqueness may also be obtained by proving an existence result for a dual equation starting from a suitably large set of initial data. Besides one may choose the measure with respect to which identification  of $L^2$-duals is performed in order to make this dual equation as simple as possible. Here one may enforce that the dual equation has essentially the same structure as the original one ; see Section~\ref{s:hypo} for  details on involved symmetries. At last existence for data in adapted Sobolev spaces --- which is sufficient to complete the proof --- follows from classical semigroup arguments \cite{Pazy}. One may for instance use a spectral version of classical hypocoercive estimates to prove that the equation generates a semigroup when $\phi\equiv0$ \cite[Chapter~4]{Pazy} and recover the full case by a fixed point argument or directly establish frozen-time spectral estimates to prove that the equation generates an evolution system \cite[Chapter~5]{Pazy}. We skip these truly classical details but refer the reader to \cite[Chapter~5]{Helffer-Nier_book} for a detailed analysis that is closely related to the last part of the argument.
      
      \subsection{Weak compactness arguments}\label{s:compactness}
      
      As already announced it turns out that in the linear case the arguments that we have expounded along the formal derivation in Section \ref{s:heuristic} have fully justified close counterparts. This is the line we follow to prove Theorem~\ref{th:compactness}.

      \begin{proof}[Proof of Theorem \ref{th:compactness}]
	Since $(f^\eps\,M^{-1}(1+(\ln(f^\eps\,M^{-1}))_+))$ is uniformly bounded in $L^1_{loc}(\R_+;L^1(\T^3\times\R^3,M(v)\,\dD x\,\dD v))$, de la Vall\'ee-Poussin and Dunford-Pettis theorems imply that $(f^\eps)$ converges weakly to some $f$ in $L^1_{loc}(\R_+;L^1(\T^3\times\R^3))$ along some subsequence. Moreover the bound on the dissipation of free energy in \eqref{mainapriori} implies that 
	$$
	\int_0^t\iint_{\T^3\times\R^3} \left|\nabla_v\sqrt{\frac{f(s,x,v)}{M(v)}}\right|^2\,M(v)\,\dD x\dD v\,\dD s\,=\,0
	$$
	thus $f$ is a local Maxwellian, $f=n\,M$ with $n=\int_{\R^3} f(\cdot,\cdot,v)\,\dD v$. Besides tensorization with the  constant function equal to one on $\R^3$ shows that $n^\eps$ does converge weakly to $n$ in $L^1_{loc}(\R_+;L^1(\T^3))$.
	
	Bounds on the dissipation also shows that $(\eps^{-\alpha}(v\,f^\eps+\nabla_v f^\eps))$ converges strongly to zero in $L^2(\R_+;L^1(\T^3\times\R^3))$ since
	$$
	v\,f^\eps+\nabla_v f^\eps\,=\,2\,\sqrt{f^\eps}\quad \nabla_v\left(\sqrt{\frac{f^\eps}{M}}\right)\,\sqrt{M}
	$$
	and the Cauchy-Schwarz inequality shows that the relevant norm is $\cO(\eps^{(1-\alpha)/2})$ as $\eps\to0$. This key observation shows that for any smooth compactly-supported function $\psi:\R_+\times\T^3\times\R^3\to\R$ that is independent of the polar angle $\theta$, or equivalently such that $v^\perp\cdot\nabla_v \psi\equiv 0$, taking the limit $\eps\to0$ along the relevant subsequence yields
	$$
	\int_{\R_+\times\T^3\times\R^3}
	(v\cdot\nabla_x\psi(t,x,v)\,-\,\sigma\nabla_x\phi(t,x)\cdot\nabla_v\psi(t,x,v))\,f(t,x,v)\,\dD t\,\dD x\,\dD v\,=\,0
	$$
	which, given that $f$ is a local Maxwellian, is also 
	$$
	\int_{\R_+\times\T^3\times\R^3}
	v\cdot(\nabla_x\psi(t,x,v)\,-\,\sigma\nabla_x\phi(t,x)\ \psi(t,x,v))\,n(t,x)\,M(v)\,\dD t\,\dD x\,\dD v\,=\,0
	$$
	thus, by symmetries of $\psi$ and $M$,
	$$
	\int_{\R_+\times\T^3\times\R^3}
	v_\mypar(\d_{x_\mypar}\psi(t,x,v)\,-\,\sigma\d_{x_\mypar}\phi(t,x)\ \psi(t,x,v))\,n(t,x)\,M(v)\,\dD t\,\dD x\,\dD v\,=\,0\,.
	$$
	Choosing a non zero smooth radial compactly-supported $\tpsi:\R^3\to\R_+$ and noticing that the foregoing class of $\psi$ functions includes $(t,x,v)\mapsto v_{\mypar}\,\tpsi(v)\,\upsi(t,x)$ for any smooth compactly-supported $\upsi:\R_+\times\T^3\to\R_+$, one derives that in distributional sense $\d_{x_\mypar}n\,=\,-\sigma\,n\,\d_{x_\mypar}\phi$. Hence $n\in L^1_{loc}(\R_+\times\T^2;W^{1,1}(\T))$ and $\d_{x_{\mypar}}(e^{\sigma\,\phi}\,n)=0$ thus
	$$
	n(t,x)\,=\,N(t,x_\perp)\,\frac{e^{-\sigma\phi(t,x)}}{\int_{\T} e^{-\sigma\phi(t,x_\perp,y_\mypar)}\,\dD\,y_\mypar}\,,\qquad N(t,x_\perp)\,=\,\int_{\T}n(t,x_\perp,y_\mypar)\dD y_\mypar\,.
	$$
	
	Now we prove that $N$ satisfies the claimed equation. To do so we set 
	$$
	N^\eps(t,x_\perp)\,=\,\int_{\T}n^\eps(t,x_\perp,y_\mypar)\,\dD y_\mypar
	$$ 
	and observe that by using again a tensorization argument one gets that along the relevant subsequence $N^\eps$ converges weakly to $N$ in $L^1_{loc}(\R_+;L^1(\T^2))$. As in Section~\ref{s:heuristic} we also introduce
	$$
	J^\eps(t,x_\perp)\,=\,\frac{1}{\eps}\int_{\T\times\R^3}\,v\,f^\eps(t,x_\perp,y_\mypar,v)\,\dD y_\mypar\,\dD v
	$$
	and observe that $\d_tN^\eps\,+\,\textrm{div}_{x_\perp}(J^\eps)\,=\,0$. The same kind of integration shows that
	$$
	\sigma(J^\eps)^\perp+
	\textrm{div}_{x_\perp}\left(\int_{\T\times\R^3}\,v_\perp\otimes v_\perp\,f^\eps(\cdot,\cdot,x_\mypar,v)\dD x_\mypar \dD v\right)
	\ +\ \sigma\int_{\T}\nabla_{x_\perp}\phi(\cdot,\cdot,x_\mypar)\,n^\eps(\cdot,\cdot,x_\mypar)\dD x_\mypar
	$$ 
	converges weakly to zero. Moreover the last term of the foregoing expression converges weakly to 
	$$
	\sigma\int_{\T}\nabla_{x_\perp}\phi(\cdot,\cdot,x_\mypar)\,n(\cdot,\cdot,x_\mypar)\dD x_\mypar
	\ =\ -N\,\frac{\nabla_{x_\perp}\left(\int_{\T}e^{-\sigma\,\phi(\cdot,\cdot,x_\mypar)}\dD x_\mypar\right)}{\int_{\T}e^{-\sigma\,\phi(\cdot,\cdot,x_\mypar)}\dD x_\mypar}
	$$
	along the relevant subsequence. Finally we observe that
	$$
	\begin{array}{rcl}\ds
	  \int_{\T\times\R^3}\,v_\perp\otimes v_\perp
	  \,f^\eps(\cdot,\cdot,x_\mypar,v)\dD x_\mypar \dD v
	  &=&\ds
	  \int_{\T\times\R^3}\,v_\perp\otimes(v_\perp+\nabla_{v_\perp})
	  \,f^\eps(\cdot,\cdot,x_\mypar,v)\dD x_\mypar \dD v
	  \,+\,N^\eps\,\I
	\end{array}
	$$
	and that 
	$$
	\begin{array}{rcl}\ds
	  \int_{\T\times\R^3}&&\ds
	  v_\perp\otimes(v_\perp+\nabla_{v_\perp})
	  \,f^\eps(\cdot,\cdot,x_\mypar,v)\dD x_\mypar \dD v\\
	  &\hspace{-2em}=&\ds
	  2\int_{\T\times\R^3}\,v_\perp\,\sqrt{f^\eps(\cdot,\cdot,x_\mypar,v)}\otimes\nabla_{v_\perp}
	  \,\left(\sqrt{\frac{f^\eps}{M}}\right)(\cdot,\cdot,x_\mypar,v) \sqrt{M(v)}\dD x_\mypar \dD v
	\end{array}
	$$
	is seen to converge to zero in $L^2(\R_+;L^1(\T^2))$ by using the Cauchy-Schwarz inequality combined with bounds on second-order moments and dissipation of free energy. Therefore $J^\eps_\perp$ converges weakly to 
	$$
	\sigma(\nabla_{x_\perp}N)^\perp\,-\,\sigma\,N\,\left(\nabla_{x_\perp}\ln\left(\int_{\T}e^{-\sigma\,\phi(\cdot,\cdot,x_\mypar)}\dD x_\mypar\right)\right)^\perp
	$$
	along the relevant subsequence. Inserting this in $\d_tN^\eps\,+\,\textrm{div}_{x_\perp}(J^\eps)\,=\,0$ achieves the proof.
      \end{proof}
      \br
      It may be instructive to compare the former proof with the arguments proving \cite[Lemma~4.1]{herda_2015_massless} as those would apply equally well here. They rely rather on a direct compactness argument on $n^\eps$ and log-Sobolev and Czisz\'ar-Kullback-Pinsker inequalities to control $f^\eps-n^\eps\,M$.
      \er
      \br
      Note that in the last step while we had not enough control on moments in velocities to justify in this way the convergence of expressions involving terms quadratic in velocity we were able to use cancellations and collisional dissipation to do so. This is a by now classical trick that was crucial in similar studies of the diffusive regime $\alpha=1$ ; see \cite[Section~5]{poupaud_2000_parabolic} and \cite[Propositions~3.3 \&~4.3]{herda_2015_massless}. However we could replace it with an argument involving evanescent cut-off functions as in \cite{StRaymond_large-velocities}. 
      \er
      
      \subsection{Anisotropic hypocoercive estimates}\label{s:hypo}
      Now we provide convergence rates for some limits involved in the derivation of our asymptotic model. To keep technicalities as low as possible we assume here that $\phi$ does not depend on time $t$ but only on $x\in\T^3$. While it is likely that one may also deal with time-dependent cases assuming sufficient control on time variations of $\phi$ this would add unessential complications to already technical estimates.
      
      Part of the asymptotic process involves oscillatory processes and some limits are in essence weak limits that may only be expected to hold in spaces of negative regularity. We only focus here on those that are expected, and proved here, to be strong limits. With this in mind the goal of this section is merely to provide quantitative bounds, explicit with respect to $\eps$ and $\alpha$, on the distance between the distribution function $f^\eps$ and an anisotropic Maxwell-Boltzmann density expressed in terms of $N^\eps$. While one could track convergence rates of vanishing terms in the foregoing convergence proof, this would lead to estimates in functional spaces of (highly) negative regularity by lack of control on (spatial) gradients of the solution. Here instead we prove that by assuming that initial data belongs to a stronger space, controlling in particular any number of moments in velocity, we obtain natural convergence rates for 
      $$
      f^\eps(t,x,v)\,-\,N^\eps(t,x_\perp)\,\frac{e^{-\sigma\phi(x)}}{\int_{\T} e^{-\sigma\phi(x_\perp,y_\mypar)}\,\dD\,y_\mypar}\,\frac{e^{-\frac{|v|^2}{2}}}{(2\pi)^{3/2}}
      $$
      in strong norms for some suitable $N^\eps$. Note that while convergence of $f^\eps-n^\eps\,M$ follows at once from the dissipation of free energy, the next step relies on a subtler interplay between three kind of terms : electric-and-free transport, magnetic transport and collisions. As we show in Appendix~\ref{s:hypocoercapp} this interplay may be essentially elucidated by computing commutators of involved terms and then encoded in a global functional of hypocoercive type.
      
      To make the functional setting more precise, let us introduce $h^\varepsilon  = f^\varepsilon/\cM$ where once again
      \[
      \cM(x,v)\, =\,\frac{1}{(2\pi)^{3/2}} e^{-\left(\sigma\phi(x)+\frac{|v|^2}{2}\right)},
      \] 
      is our global Maxwellian. Associated initial data $h_0  = f_0/\cM$ is assumed to belong to $\cH=L^2(\mu)$ the Hilbert space $\cH$ characterized by its
      $L^2$-norm $\|\cdot\|=(\int |\cdot|^2\,\dD\mu)^{1/2}$ where $\mu$ is the probability measure with density $\cM$. In the following we denote by $\langle\cdot;\cdot\rangle$ the corresponding scalar product.
      
      A direct computation shows that the distribution function $f^\varepsilon$ solves the linear Vlasov-Fokker-Planck equation \eqref{FPf} with external electric potential $\phi$ if and only if $h^\varepsilon$ satisfies
      \begin{equation}
	\varepsilon\ \partial_t h^\eps + v\cdot\nabla_xh^\eps - \sigma\nabla_x\phi\cdot\nabla_vh^\eps  - \frac{\sigma}{\varepsilon}v^\perp\cdot\nabla_vh^\eps\, = \,\frac{1}{\varepsilon^\alpha}\left(v\cdot\nabla_vh^\eps - \Delta_vh^\eps\right).
	\label{FPh}
      \end{equation}
      In the absence of any magnetic field, the original version of the approach that we extend here to serve our purposes would lead to a quantitative proof of convergence as $\eps\to0$ to the global Maxwellian $\cM$, and in this case it would really follow from very classical hypocoercive estimates. See \cite{Villani_hypo,Mouhot-Neumann,Dolbeault-Mouhot-Schmeiser,HR_hypo} for some instances of proofs of hypocoercive convergence to equilibrium through global functionals. The presence of a strong magnetic field prevents return to equilibrium in the perpendicular direction and our goal is to design functionals that capture this anisotropic behavior. We detail this strategy along the proof of the following theorem, which is the main result of this section.
      
      \begin{theorem}\label{th:hypofull}
	Suppose that $\phi\in W^{2,\infty}(\T^3)$.\\
	Then for any $h_0\in\cH$ and any $\eps>0$ there exists a unique global solution $h^\eps$ to \eqref{FPh} starting from $h_0$ at time zero such that $h^\eps\in\cC(\R_+;\cH)$ and $\nabla_vh^\eps\in L^2(\R_+;\cH)$.\\
	Moreover there exists a constant $C_\phi>0$, depending only on the norm of $\phi$ in $W^{2,\infty}(\T^3)$, such that for any $\varepsilon\in(0,1)$, $f^\eps=h^\eps\cM$ --- solving \eqref{FPf} with initial data $f_0=h_0\cM$ and associated to the above solution $h^\eps$ --- satisfies
	\be
	\left\|f^\varepsilon-n^\varepsilon M\right\|_{L^2(\R_+;\ L^2(M^{-1}(v)\dD x\dD v))}\ \leq\ C_\phi\,\|h_0\|\,\varepsilon^{\frac{\alpha+1}{2}};
	\label{distMaxw}
	\ee
	\be
	\left\|n^\varepsilon-N^\eps\,\frac{e^{-\sigma\phi}}{\int_{\T} e^{-\sigma\phi(\cdot,y_\mypar)}\,\dD\,y_\mypar}\right\|_{L^2(\R_+\times\T^3)}\ \leq\ C_\phi\,\|h_0\|\,\varepsilon^{\frac{1-|\alpha|}{2}}
	\label{distBG}
	\ee
	where
	$$
	n^\eps\,=\,\int_{\R^3} f^\eps(\cdot,\cdot,v)\dD v
	\qquad\textrm{and}\qquad
	N^\eps\,=\,\int_{\T}n^\eps(\cdot,\cdot,y_\mypar)\dD y_\mypar\,.
	$$
	In particular, since $\alpha\in(-1,1)$, as $\eps\rightarrow 0$ the distribution function $f^\eps$ gets $\eps^{\frac{1-|\alpha|}{2}}$-close to the anisotropic Maxwell-Boltzmann density 
	$$
	(t,x,v)\mapsto N^\eps(t,x_\perp)\,\frac{e^{-\sigma\phi(x)}}{\int_{\T} e^{-\sigma\phi(x_\perp,y_\mypar)}\,\dD\,y_\mypar}\,\frac{e^{-\frac{|v|^2}{2}}}{(2\pi)^{3/2}} 
	$$
	in $L^2(\R_+;\ L^2(M^{-1}(v)\dD x\dD v))$.
      \end{theorem}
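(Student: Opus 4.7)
The well-posedness part follows from standard hypoelliptic semigroup theory for operators with $W^{2,\infty}$ coefficients (along the lines of \cite{Helffer-Nier_book,Pazy}), so I focus on the quantitative estimates. For \eqref{distMaxw}, I would exploit that in $\cH$ the transport operators $T_1 := v\cdot\nabla_x - \sigma\nabla_x\phi\cdot\nabla_v$ and $T_2 := -\sigma v^\perp\cdot\nabla_v$ are skew-symmetric while $-L := v\cdot\nabla_v - \Delta_v$ is self-adjoint nonnegative with $\langle -Lh,h\rangle = \|\nabla_v h\|^2$. The basic $\cH$-energy identity then reads
\[
\tfrac{\eps}{2}\,\tfrac{\dD}{\dD t}\|h^\eps\|^2 + \eps^{-\alpha}\|\nabla_v h^\eps\|^2 \,=\, 0,
\]
so $\int_0^\infty \|\nabla_v h^\eps\|^2\,\dD t \le \tfrac12\eps^{\alpha+1}\|h_0\|^2$, and a Gaussian Poincar\'e inequality in the $v$-variable yields $\|h^\eps - \bar h^\eps\|_\cH \le \|\nabla_v h^\eps\|_\cH$, where $\bar h^\eps(x) := \int h^\eps(x,v)M(v)\,\dD v = n^\eps(x)e^{\sigma\phi(x)}$. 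Translating back to the weight $M^{-1}(v)\,\dD x\,\dD v$ costs only $\|\phi\|_{L^\infty}$ and delivers \eqref{distMaxw} at once.

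For \eqref{distBG} I first perform a purely algebraic reduction. Writing $\langle\cdot\rangle_\mypar(x_\perp)$ for the $x_\mypar$-average weighted by $e^{-\sigma\phi}$, one has the identity
\[
n^\eps(x) - N^\eps(x_\perp)\,\frac{e^{-\sigma\phi(x)}}{\int_\T e^{-\sigma\phi(x_\perp,y_\mypar)}\,\dD y_\mypar} \,=\, e^{-\sigma\phi(x)}\bigl(\bar h^\eps(x) - \langle\bar h^\eps\rangle_\mypar(x_\perp)\bigr).
\]
A one-dimensional weighted Poincar\'e inequality in $x_\mypar$, with the bounded and uniformly positive weight $e^{-\sigma\phi}$, bounds the right-hand side by $\|\d_{x_\mypar}\bar h^\eps\|$ in the weighted $L^2(\T^3)$-norm, and Cauchy--Schwarz on the Gaussian integral defining $\bar h^\eps$ further controls it by $\|\d_{x_\mypar}h^\eps\|_\cH$. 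Estimate \eqref{distBG} therefore reduces to
\[
\int_0^\infty \|\d_{x_\mypar}h^\eps(t,\cdot)\|_\cH^2\,\dD t \,\le\, C_\phi^2\,\|h_0\|_\cH^2\,\eps^{1-|\alpha|}.
\]

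The remaining integrated control of parallel-spatial gradients is the hypocoercive heart of the argument, dictated by commutator algebra. The relevant identities are $[T_1,\d_{v_\mypar}] = -\d_{x_\mypar}$ (which transfers collisional velocity-damping into parallel-spatial damping), $[T_2,\d_{v_\mypar}] = 0$ and $[T_2,\d_{x_\mypar}] = 0$ (the magnetic rotation leaves the parallel chain invariant), whereas $[T_2,\nabla_{v_\perp}] = \sigma\nabla_{v_\perp}^\perp$ scrambles perpendicular velocity gradients and precludes any analogous perpendicular estimate --- which is exactly why only an \emph{anisotropic} functional can work. I would therefore build a Villani-type Lyapunov functional using parallel derivatives alone,
\[
\cF_t(h) \,=\, \|h\|^2 + A(t)\|\d_{v_\mypar}h\|^2 + 2B(t)\langle\d_{v_\mypar}h,\d_{x_\mypar}h\rangle + C(t)\|\d_{x_\mypar}h\|^2,
\]
with $\eps$- and $t$-dependent positive weights $A,B,C$ satisfying $B^2<AC$ and chosen so that differentiation along \eqref{FPh} yields, after absorbing the cross term $\sigma\nabla_x(\d_{x_\mypar}\phi)\cdot\nabla_v h$ produced by $[T_1,\d_{x_\mypar}]$ against the unconditional collisional dissipation $\eps^{-\alpha}\|\nabla_v h\|^2$, a differential inequality of the form
\[
\tfrac{\eps}{2}\,\tfrac{\dD}{\dD t}\,\cF_t(h^\eps) + \kappa_0\,\|\d_{x_\mypar}h^\eps\|^2 \,\le\, 0
\]
for some $\kappa_0>0$ depending only on $\|\phi\|_{W^{2,\infty}}$ and scaled precisely so that $\cF_0(h_0)\lesssim \eps^{1-|\alpha|}\|h_0\|_\cH^2$. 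Time-integration then produces the required bound, combining the hypocoercive functional with H\'erau--Nier type hypoelliptic smoothing in the initial layer to absorb the lack of prescribed regularity on $h_0$.

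The \emph{main obstacle} lies in the simultaneous tuning of these $\eps$-dependent weights so as to handle uniformly both the collision-dominated regime $\alpha\in(0,1)$ and the transport-dominated regime $\alpha\in(-1,0)$, while producing exactly the rate $\eps^{1-|\alpha|}$ in both. A further subtlety is that the cross term $\sigma\nabla_x(\d_{x_\mypar}\phi)\cdot\nabla_v h$ arising from $[T_1,\d_{x_\mypar}]$ couples the parallel block back to the full velocity gradient $\nabla_v h$ (not merely to $\d_{v_\mypar}h$), so it must be absorbed by the \emph{a priori} collisional dissipation rather than by a spectral gap internal to $\cF_t$; combining this with the balancing of exponents is the delicate technical step at the core of the proof, presumably detailed in Appendix~\ref{s:hypocoercapp}.
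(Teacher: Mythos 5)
Your proposal is correct and follows essentially the same strategy as the paper: the basic $L^2(\mu)$ energy identity plus the Gaussian Poincar\'e inequality for \eqref{distMaxw}, and for \eqref{distBG} a one-dimensional Poincar\'e inequality in $x_\mypar$ combined with a time-weighted, $\eps$-weighted hypocoercive functional built on the commutator chain $\d_{v_\mypar}\to\d_{x_\mypar}$, with the Hessian cross terms absorbed by the zeroth-order collisional dissipation $\eps^{-(1+\alpha)}\|\nabla_v h\|^2$ (your observation that the perpendicular block can be dropped is consistent with the paper, which carries it along but explicitly notes the perpendicular bound is not needed). The only imprecision is your intermediate claim that \eqref{distBG} reduces to $\int_0^\infty\|\d_{x_\mypar}h^\eps\|_\cH^2\,\dD t\lesssim\eps^{1-|\alpha|}\|h_0\|^2$: with merely $h_0\in\cH$ the time weights vanish at $t=0$ and this integral need not converge near $t=0$, so, as in the paper, the bound is obtained only on $(\eps^{1+\alpha},\infty)$ and the initial layer $(0,\eps^{1+\alpha})$ must be handled at the level of $n^\eps$ by the crude $L^\infty_tL^2_x$ bound, which still yields the rate $\eps^{(1+\alpha)/2}\leq\eps^{(1-|\alpha|)/2}$.
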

      \br
      Again we stress however that the last part of the asymptotic description --- the identification of an asymptotic dynamics for limits of $N^\eps$ --- proceeds from averaging mechanisms and we do not expect to be able to capture it by similar dissipative  arguments nor do we expect the corresponding convergence to hold in strong norms.
      \er
      \begin{proof}
	We skip the proof of the existence and uniqueness part as fairly classical but we refer the reader to \cite[Chapter~5]{Helffer-Nier_book} for a detailed analysis of a nearly identical problem. The existence proof also provides detailed justification for the rather formal computations that we perform below. The reader may consult \cite{dMRV} for an instance of a similar detailed verification starting directly from the approximation process proving existence.
	
	The starting point is the $L^2$-estimate obtained from \eqref{e:H} by choosing $H(h)=h^2$, 
	\[
	\|h^\varepsilon(t,\cdot,\cdot)\|^2 + 2\varepsilon^{-(\alpha+1)}\,\int_0^t\|\nabla_{v}h^\eps(s,\cdot,\cdot)\|^2\,\dD s\ \leq\ \|h^\varepsilon_0\|^2.
	\]
	From this, \eqref{distMaxw} follows by Poincaré inequality for the Gaussian measure and boundedness of $\phi$, since 
	\[
	\begin{aligned}
	  &\int_{\R_+\times\TT^3\times\R^3}|f^\varepsilon-n^\varepsilon M|^2 M^{-1}&\leq&&&
	  \int_{\R_+\times\TT^3\times\R^3}\left|\frac{f^\varepsilon}{M}-n^\varepsilon\right|^2 M\\
	  &&\leq&&&\int_{\R_+\times\TT^3\times\R^3}\left|\nabla_v\left(\frac{f^\varepsilon}{M}\right)\right|^2 M\\
	  &&\leq&&&C\, \int_0^{+\infty}\|\nabla_{v}h^\eps(t,\cdot,\cdot)\|^2\,\dD t
	\end{aligned}
	\]
	for some $C$ depending only on $\|\phi\|_{L^\infty(\T^3)}$. In order to prove \eqref{distBG}, we need to extend this strategy that only yields dissipation of velocity derivatives so that it also provides estimates on an $x_\mypar$-derivative. To do so we exploit the hypocoercive structure and modify the $L^2$ functional in a $H^1$ functional whose dissipation contains new terms yielding suitable control on $\d_{x_\mypar}(f^\eps\,\cM^{-1})$. To state this in a concise way we define the $\eps$ and time dependent norm
	$$
	\begin{array}{rcl}
	  \norm{\eps}{t}{h}^2&=&\ds
	  \|h\|^2
	  \,+\,\gampar{1}\,\eps^{(|\alpha|-\alpha)/2}\,\min\left(\{1, \tfrac{t}{\varepsilon^{1+\alpha}}\}\right)\,\|\Apar h\|^2\\[1em]
	  &+&\ds
	  \gampar{2}\,\eps^{|\alpha|+(|\alpha|+\alpha)/2}\,\min\left(\{1, \tfrac{t}{\varepsilon^{1+\alpha}}\}\right)^3\,\|\Cpar h\|^2
	  \,+\,2\,\gampar{3}\,\eps^{|\alpha|}\,\min\left(\{1, \tfrac{t}{\varepsilon^{1+\alpha}}\}\right)^2\,\langle\Apar h,\Cpar h\rangle\\[1em]
	  &+&\ds
	  \gamperp{1}\,\eps^{1-\alpha}\,\min\left(\{1, \tfrac{t}{\varepsilon^{1+\alpha}}\}\right)\,\|\Aperp h\|^2
	  \,+\,\gamperp{2}\,\eps^{2}\,\min\left(\{1, \tfrac{t}{\varepsilon^{1+\alpha}}\}\right)^3\,\|\Cperp h\|^2\\[1em]
	  &+&2\,\gamperp{3}\,\eps^{2-\alpha}\,\min\left(\{1, \tfrac{t}{\varepsilon^{1+\alpha}}\}\right)^2\,\langle\Aperp h,\Cperp h\rangle
	\end{array}
	$$
	and the corresponding $\eps$ and time dependent (partial) dissipation
	$$
	\begin{array}{rcl}
	  \dnorm{\eps}{t}{h}&=&\ds
	  \eps^{-(1+\alpha)}\|\nabla_vh\|^2
	  \,+\,\eps^{-1+(|\alpha|-\alpha)/2-\alpha}\,\min\left(\{1, \tfrac{t}{\varepsilon^{1+\alpha}}\}\right)\,\|\nabla_v\Apar h\|^2\\[1em]
	  &+&\ds
	  \,\eps^{-1+|\alpha|+(|\alpha|-\alpha)/2}\,\min\left(\{1, \tfrac{t}{\varepsilon^{1+\alpha}}\}\right)^3\,\|\nabla_v\Cpar h\|^2
	  \,+\,\eps^{|\alpha|-1}\,\min\left(\{1, \tfrac{t}{\varepsilon^{1+\alpha}}\}\right)^2\,\|\Cpar h\|^2\\[1em]
	  &+&\ds
	  \,\eps^{-2\alpha}\,\min\left(\{1, \tfrac{t}{\varepsilon^{1+\alpha}}\}\right)\,\|\nabla_v\Aperp h\|^2
	  \,+\,\eps^{1-\alpha}\,\min\left(\{1, \tfrac{t}{\varepsilon^{1+\alpha}}\}\right)^3\,\|\nabla_v\Cperp h\|^2\\[1em]
	  &+&\,\eps^{1-\alpha}\,\min\left(\{1, \tfrac{t}{\varepsilon^{1+\alpha}}\}\right)^2\,\|\Cperp h\|^2\,.
	\end{array}
	$$
	These quantities are related through the dissipation estimate contained in the following lemma.
	\bl\label{l:technical}
	Let $\phi\in W^{2,\infty}(\T^3)$. There exist positive constants $\gamperp{i}$, $\gampar{i}$, $i\in\{1,2,3\}$ such that, 
	\begin{itemize}
	  \item uniformly in $(t,\eps)\in\R_+\times(0,1)$, the norm $\norm{\eps}{t}{\cdot}$ is equivalent to the square root of
	  $$
	  \begin{array}{rcl}
	    \| \cdot\|^2
	    &+&\ds
	    \eps^{(|\alpha|-\alpha)/2}\,\min\left(\{1, \tfrac{t}{\varepsilon^{1+\alpha}}\}\right)\,\|\Apar (\cdot)\|^2
	    \,+\,\eps^{|\alpha|+(|\alpha|+\alpha)/2}\,\min\left(\{1, \tfrac{t}{\varepsilon^{1+\alpha}}\}\right)^3\,\|\Cpar(\cdot)\|^2\\[1em]
	    &+&\ds
	    \eps^{1-\alpha}\,\min\left(\{1, \tfrac{t}{\varepsilon^{1+\alpha}}\}\right)\,\|\Aperp(\cdot)\|^2
	    \,+\,\eps^{2}\,\min\left(\{1, \tfrac{t}{\varepsilon^{1+\alpha}}\}\right)^3\,\|\Cperp(\cdot)\|^2\,.
	  \end{array}
	  $$
	  \item there exists $K>0$ such that for any $h_0\in\cH$ the solution $h^\eps$ of~\eqref{FPh} starting from $h_0$ satisfies for any $\eps\in(0,1)$ and any $t\geq0$,
	  \begin{equation}
	    \norm{\eps}{t}{h^\eps(t,\cdot,\cdot)}^2\ + K\int_0^t\ \dnorm{\eps}{s}{h^\eps(s,\cdot,\cdot)}\ \dD s\leq\ \| h_0\|^2.
	    \label{estiml2dissipani}
	  \end{equation}
	\end{itemize}
	\el
	The proof of this result is given in  Appendix~\ref{s:hypocoercapp}. There in particular we explain in details how different choices of power of $t$ and $\eps$ arise as necessary constraints to satisfy expected inequalities and how constants $\gamma_{\mypar,i}$ and $\gamma_{\perp,i}$ may then be tuned to close the argument. Nonetheless, we do provide here some partial insights on the choice of the functional. Standard $L^2$-based hypocoercive strategy for the kinetic Fokker-Planck operator suggests to estimate $H^1$ type functionals with mixed derivative to unravel dissipation of space derivatives that was missing in the original $L^2$ computation. Here, direct computations performed in Lemma~\ref{lemhypo} show that
	\begin{eqnarray}
	  \ds\frac{\dD}{\dD t}\lla \Apar h^\eps, \Cpar h^\eps\rra
	  &+&\eps^{-1}\|\Cpar h^\eps\|^2
	  \,=\,\ds\sigma\,\varepsilon^{-1}\lla  (\nabla_vh^\varepsilon\cdot\nabla_x)\d_{x_\mypar}\phi ,\Apar h^\varepsilon\rra\label{estvxpar}\\
	  &-&\ds
	  \eps^{-(1+\alpha)}\left(\lla \Apar h^\eps, \Cpar h^\eps\rra + 2\lla \nabla_v\Apar h^\eps, \nabla_v\Cpar  h^\eps\rra\right)
	  \nonumber\\
	  \ds\frac{\dD}{\dD t}\lla \Aperp h^\varepsilon, \Cperp h^\varepsilon\rra&+&\varepsilon^{-1}\|\Cperp h^\varepsilon\|^2\label{estvxperp}\\
	  &=&\ds-\varepsilon^{-(1+\alpha)}\big{\{}\lla \Aperp h^\varepsilon, \Cperp h^\varepsilon\rra
	  \,+\,2\lla \nabla_v\Aperp h^\varepsilon, \nabla_v\Cperp  h^\varepsilon\rra\big{\}}\nonumber\\
	  &+&\ds\sigma\,\varepsilon^{-1}\lla (\nabla_vh^\varepsilon\cdot\nabla_x)\nabla_{x_\perp}\phi,\Aperp h^\varepsilon\rra
	  \,-\,\sigma\, \varepsilon^{-2}\lla   \Aperp^\perp h^\varepsilon,\Cperp h^\varepsilon\rra.\nonumber
	\end{eqnarray}
	where good useful dissipation terms have been kept on left-hand sides and useless signless terms have been left on the right. When putting everything in one global functional the goal is to balance each bad term with some fraction of good dissipative terms. We stress that in \eqref{estvxperp} the worst term to balance --- the one of order $\eps^{-2}$ --- is precisely the one due to the presence of a strong magnetic field. It stems from non-commutation of $\nabla_v$ and $v^\perp\cdot\nabla_v$ and forces the choice of anisotropic weights in our functional. Finally, we note that the presence of time weights in our functionals only reflects our choice to use hypoelliptic regularizing effects to start with $L^2$ and not $H^1$ initial data but gain instantaneous $H^1$ control on the solution. In particular our choice of the powers of time involved in the above definitions follows directly from the hypoelliptic structure of the operator at hand. For the kinetic Fokker-Planck the method essentially originates in work of H\'erau \cite{Herau_FP-confining} ; see also \cite[Appendix~A.21]{Villani_hypo} or \cite{dMRV}.
	
	We now use dissipative effects afforded by Lemma~\ref{l:technical} to achieve the proof. The dissipation of the energy functional in \eqref{estiml2dissipani} provides a bound on the size
	of the perpendicular and parallel space derivatives of $h^\eps$ in $\cH$.
	Indeed, by non-negativity of $\|\cdot\|_{\eps,t}$ we get from the dissipation in \eqref{estiml2dissipani} that
	\[
	\begin{aligned}
	  &\|\d_{x_\mypar}h^\eps\|_{L^2((\eps^{1+\alpha}, +\infty);\ \cH)}   &\leq&&&  C\,\|h_0\|\,\varepsilon^{\frac{1-|\alpha|}{2}},\\
	  &\|\d_{x_\perp}h^\eps\|_{L^2((\eps^{1+\alpha},+\infty);\ \cH)}  &\leq&&&  C\,\|h_0\|\,\eps^{\frac{\alpha - 1}{2}}
	\end{aligned}
	\]
	for some constant $C$ depending only on $\|\phi\|_{W^{2,\infty}(\T^3)}$. To prove our claim only the first estimate --- on the parallel derivative --- is needed. Nevertheless, let us stress that the second estimate is essentially useless and in any case does not contradict the non-convergence to a global Maxwellian in the perpendicular direction since $\alpha<1$. To proceed we first observe that
	$$
	e^{\sigma\phi}n^\eps\,=\,\int_{\R^3}h^\eps(\cdot,\cdot,v)\,M(v)\,\dD v
	$$
	hence by the Jensen inequality, for any $t\geq0$
	$$
	\|\d_{x_\mypar}(e^{\sigma\phi}n^\eps)(t,\cdot)\|_{L^2(\T^3)}
	\leq \left(\int_{\R^3}\|\d_{x_\mypar}h^\eps(t,\cdot,v)\|_{L^2(\T^3)}^2\,M(v)\,\dD v\right)^{1/2}
	\leq \|e^{\sigma\phi}\|_{L^\infty}^{\tfrac12}\,\|\d_{x_\mypar}h^\eps(t,\cdot,\cdot)\|\,.
	$$
	Therefore introducing the intermediate quantity
	$$
	\tN^\eps\,=\,\int_{\T}n^\eps(\cdot,\cdot,y_\mypar)\,e^{\sigma\phi(\cdot,y_\mypar)}\dD y_\mypar
	\,\times\,\int_{\T}e^{-\sigma\phi(\cdot,y_\mypar)}\dD y_\mypar
	$$
	the classical Poincar\'e inequality on $\T$ yields
	$$
	\begin{array}{rcl}
	  \ds
	  \left\|n^\varepsilon- \frac{\tN^\varepsilon e^{-\sigma\phi}}{\int_{\T} e^{-\sigma\phi(\cdot,y_\mypar)}\dD y_\mypar}\right\|_{L^2((\eps^{1+\alpha},+\infty)\times\T^3)}
	  &\leq&\ds
	  \|e^{-\sigma\phi}\|_{L^\infty}
	  \left\|n^\eps e^{\sigma\phi}- \frac{\tN^\varepsilon}{\int_{\T} e^{-\sigma\phi(\cdot,y_\mypar)}\dD y_\mypar}\right\|_{L^2((\eps^{1+\alpha},+\infty)\times\T^3)}\\[0.5em]
	  &\leq&\|e^{-\sigma\phi}\|_{L^\infty}
	  \|\d_{x_\mypar}(e^{\sigma\phi}n^\eps)\|_{L^2((\eps^{1+\alpha},+\infty)\times\T^3)}\\[0.5em]
	  &\leq&C\|e^{-\sigma\phi}\|_{L^\infty(\T^3)}\|e^{\sigma\phi}\|_{L^\infty(\T^3)}^{\tfrac12}
	  \|h_0\|\,\varepsilon^{\frac{1-|\alpha|}{2}}\,.
	\end{array}
	$$
	This achieves the proof with $\tN^\eps$ instead of $N^\eps$ when combined with
	$$
	\begin{array}{rcl}
	  \ds
	  \left\|n^\varepsilon- \frac{\tN^\varepsilon e^{-\sigma\phi}}{\int_{\T} e^{-\sigma\phi(\cdot,y_\mypar)}\dD y_\mypar}\right\|_{L^2((0,\eps^{1+\alpha})\times\T^3)}
	  &\leq&\ds
	  \eps^{\frac{1+\alpha}{2}}\left\|n^\varepsilon- \frac{\tN^\varepsilon e^{-\sigma\phi}}{\int_{\T} e^{-\sigma\phi(\cdot,y_\mypar)}\dD y_\mypar}\right\|_{L^\infty((0,\eps^{1+\alpha});L^2(\T^3))}\\[0.5em]
	  &\leq&C'\eps^{\frac{1+\alpha}{2}}\|h_0\|
	  \,\leq\,C'\eps^{\frac{1-|\alpha|}{2}}\|h_0\|
	\end{array}
	$$
	for some $C'$ depending only on $\|\phi\|_{W^{2,\infty}(\T^3)}$, where we have used
	$$
	\|\tN^\eps\|_{L^\infty(\R_+;L^2(\T^2))}
	\leq \|e^{-\sigma\phi}\|_{L^\infty(\T^3)}\|e^{\sigma\phi}\|_{L^\infty(\T^3)}
	\|n^\eps\|_{L^\infty(\R_+;L^2(\T^3))}
	$$
	and
	$$
	\|n^\eps\|_{L^\infty(\R_+;L^2(\T^3))}
	\leq \|e^{\sigma\phi}\|_{L^\infty(\T^3)}^{\frac12}
	\|h^\eps\|_{L^\infty(\R_+;\cH)}
	\leq \|e^{\sigma\phi}\|_{L^\infty(\T^3)}^{\frac12}
	\|h_0\|\,.
	$$
	To conclude the proof we simply observe that 
	$$
	N^\eps-\tN^\eps\,=\,\int_{\T} \left(n^\varepsilon(\cdot,\cdot,z_\mypar)- \frac{\tN^\varepsilon \,e^{-\sigma\phi(\cdot,z_\mypar)}}{\int_{\T} e^{-\sigma\phi(\cdot,y_\mypar)}\dD y_\mypar}\right)\dD z_\mypar
	$$
	hence
	$$
	\|N^\eps-\tN^\eps\|_{L^2(\R_+\times\T^2)}
	\leq \left\|n^\varepsilon- \frac{\tN^\varepsilon e^{-\sigma\phi}}{\int_{\T} e^{-\sigma\phi(\cdot,y_\mypar)}\dD y_\mypar}\right\|_{L^2(\R_+\times\T^3)}\,.
	$$
      \end{proof}
      
      \section{Extensions and perspectives}\label{sec:conclu}

We conclude the present contribution with some further technical comments.
      
      \subsubsection*{Non-constant magnetic fields}
      Though for expository purposes we have restricted our presentation to constant magnetic fields there is one class of non-uniform magnetic fields to which our analysis applies with little modifications : magnetic fields with constant direction but varying amplitude $b$. Explicitly we may consider the kinetic equation
      \[
      \eps\ \d_t f^\eps + v\cdot\nabla_xf^\eps+\sigma\,E^\eps\cdot\nabla_vf^\eps - \frac{1}{\varepsilon}\sigma\,b\,v^\perp\cdot\nabla_vf^\eps\, =\, \frac{1}{\eps^\alpha}
      \textrm{div}_v(vf^\eps + \nabla_vf^\eps),
      \]
      where\footnote{The fact that $b$ does not depend on the parallel coordinate $x_\mypar$ is necessary to ensure a divergence-free magnetic field consistently with Gauss's law.} $b:\R_+\times\T^2\to\R_+$, $(t,x_\perp)\mapsto b(t,x_\perp)$ is smooth and bounded away from zero.
      Theorems~\ref{th:strong}, \ref{th:weak}, \ref{th:compactness} and~\ref{th:hypo} still hold in this case provided \eqref{guiding-center} is replaced with
      $$
      \d_t N\,+\,\textrm{div}_{x_\perp}(N\,U_\perp)\,=\,0\qquad
      \text{where}\qquad
      U_\perp \,=\,\frac{1}{b}(\nabla_{x_\perp}\tphi)^\perp-\frac{\sigma}{b^2}(\nabla_{x_\perp} b)^\perp\,.
      $$
      However for larger classes of magnetic fields the question remains fully open and we expect geometric features of the magnetic field under consideration to play a prominent role even at the formal level. With this respect the reader may benefit from a look at the intricate effects already appearing in linear collisionless models where electric fields are neglected \cite{Cheverry_whistler,Cheverry_axisymmetric} and compare them with \cite{FR2,Bostan_2D-VP,FR18}.

      \subsubsection*{Other collision operators}
      From an applicative point of view the main restriction to relax is probably the form of our collision operators $Q(f) = \mathrm{div}_v(v f +\nabla_vf)$. On this topic we first observe that actually Theorem~\ref{th:compactness} does not rely heavily on detailed collision statistics and that this part of our analysis extend to other kinetic collisional operators with one-dimensional kernels (when considered as operators on functions of $v$ only) such as the relaxation / linear BGK operator $Q(f) = n\,M - f$. We do use strong controls on velocities afforded by the Fokker-Planck operator in our proof of Theorem~\ref{th:compactness} but this may be replaced with more robust arguments involving evanescent cut-off functions as in \cite{StRaymond_large-velocities}. In contrast extending our hypocoercive estimates would require a case-by-case analysis. Further work would be needed to extend any part of our analysis to more realistic operators taking into account classical collisional invariants hence exhibiting higher-dimensional kernels. In another direction we stress that implicitly we have only mentioned cases where the presence of a strong magnetic fields affects collisions statistics in a scalar hence isotropic way. Yet one should expect strong magnetic fields to impact scattering mechanisms hence collision frequencies in an anisotropic way. Taking this into account might also introduce anisotropic effects in the Maxwellian distribution in velocity. To the knowledge of the authors, at least at the analytical level, this line of investigation is however widely open. 
      
      \subsubsection*{Validation of nonlinear reduced systems} Leaving aside modeling considerations the main analytical open question is the validation of nonlinear asymptotic models. In \cite{herda_2015_massless} nonlinear asymptotic reductions have been treated successfully for the critical case $\alpha=1$, corresponding to a diffusive limit, but arguments there did not need to benefit from strongly averaging magnetic effects and instead followed the strategy of the magnetic-field-free case treated in \cite{el_2010_diffusion} by proving that in the critical case magnetic contributions could also be suitably bounded by the free energy dissipation. This in particular provides results that do not require any geometric property for the magnetic fields. In contrast we expect that a refined analysis of cancellations would be needed to carry out a nonlinear analysis in the case under consideration here. Moreover we stress that even in the diffusive cases treated in \cite{el_2010_diffusion,herda_2015_massless} for free energy solutions similar to those of Theorem~\ref{th:compactness} uniform \emph{a priori} bounds are too weak to apply compactness arguments at the level of weak solutions and the analysis involves suitably tailored renormalized solutions. On the other side, the nonlinear analysis of the recent \cite{HR_hypo} proving strong uniform bounds by hypocoercive arguments in the non magnetized case seems to be deeply altered by the introduction of a dominant magnetic field. On a yet another distinct direction one may want to test numerically nonlinear models against direct simulations but since phenomena are genuinely three-dimensional and long this would involve long-time computations in six dimensions, 
      a daunting task.

      \appendix
      
      \section{Hypocoercivity : a technical lemma, Lemma~\ref{l:technical}}
      \label{s:hypocoercapp}
      To motivate the actual choice of powers of $\eps$ we prove a slighted extended version of Lemma~\ref{l:technical} for norms involving free powers of $\eps$,
      $$
      \begin{array}{rcl}
	\norm{\eps}{t}{h}^2&=&\ds
	\|h\|^2
	\,+\,\gampar{1}\,\eps^{\betpar{1}}\,\min\left(\{1, \tfrac{t}{\varepsilon^{1+\alpha}}\}\right)\,\|\Apar h\|^2\\[1em]
	&+&\ds
	\gampar{2}\,\eps^{\betpar{2}}\,\min\left(\{1, \tfrac{t}{\varepsilon^{1+\alpha}}\}\right)^3\,\|\Cpar h\|^2
	\,+\,2\,\gampar{3}\,\eps^{\betpar{3}}\,\min\left(\{1, \tfrac{t}{\varepsilon^{1+\alpha}}\}\right)^2\,\langle\Apar h,\Cpar h\rangle\\[1em]
	&+&\ds
	\gamperp{1}\,\eps^{\betperp{1}}\,\min\left(\{1, \tfrac{t}{\varepsilon^{1+\alpha}}\}\right)\,\|\Aperp h\|^2
	\,+\,\gamperp{2}\,\eps^{\betperp{2}}\,\min\left(\{1, \tfrac{t}{\varepsilon^{1+\alpha}}\}\right)^3\,\|\Cperp h\|^2\\[1em]
	&+&2\,\gamperp{3}\,\eps^{\betperp{3}}\,\min\left(\{1, \tfrac{t}{\varepsilon^{1+\alpha}}\}\right)^2\,\langle\Aperp h,\Cperp h\rangle
      \end{array}
      $$
      and the corresponding $\eps$ and time dependent (partial) dissipation
      $$
      \begin{array}{rcl}
	\dnorm{\eps}{t}{h}&=&\ds
	\eps^{-(1+\alpha)}\|\nabla_vh\|^2
	\,+\,\eps^{-(1+\alpha)+\betpar{1}}\,\min\left(\{1, \tfrac{t}{\varepsilon^{1+\alpha}}\}\right)\,\|\nabla_v\Apar h\|^2\\[1em]
	&+&\ds
	\,\eps^{-(1+\alpha)+\betpar{2}}\,\min\left(\{1, \tfrac{t}{\varepsilon^{1+\alpha}}\}\right)^3\,\|\nabla_v\Cpar h\|^2
	\,+\,\eps^{-1+\betpar{3}}\,\min\left(\{1, \tfrac{t}{\varepsilon^{1+\alpha}}\}\right)^2\,\|\Cpar h\|^2\\[1em]
	&+&\ds
	\,\eps^{-(1+\alpha)+\betperp{1}}\,\min\left(\{1, \tfrac{t}{\varepsilon^{1+\alpha}}\}\right)\,\|\nabla_v\Aperp h\|^2
	\,+\,\eps^{-(1+\alpha)+\betperp{2}}\,\min\left(\{1, \tfrac{t}{\varepsilon^{1+\alpha}}\}\right)^3\,\|\nabla_v\Cperp h\|^2\\[1em]
	&+&\,\eps^{-1+\betperp{3}}\,\min\left(\{1, \tfrac{t}{\varepsilon^{1+\alpha}}\}\right)^2\,\|\Cperp h\|^2\,.
      \end{array}
      $$
      We prove an extended version of Lemma~\ref{l:technical} under 
      \begin{equation}
	\begin{array}{rcccl}\ds
	  \max(\{2-\alpha,\ (\betperp{1} + \betperp{2})/2\})
	  &\leq&\betperp{3}&\leq&\ds
	  \min(\{\alpha + 2 \betperp{1},\ \alpha + 2 \betperp{2},\ \betperp{2} -\alpha\}),\\[1em]\ds
	  \max(\{|\alpha|,\ (\betpar{1} + \betpar{2})/2)
	  &\leq&\betpar{3}&\leq&\ds
	  \min(\{\alpha + 2 \betpar{1},\ \alpha + 2 \betpar{2},\ \betpar{2} -\alpha\}).
	\end{array}
	\label{condbeta}
      \end{equation}
      The actual choice follows from minimization of $\betpar{3}$ and $\betperp{3}$ under this constraint that aims at the best possible control on $\d_{x_\mypar}h^\eps$ and $\nabla_{x_\perp}h^\eps$ through the dissipation. Indeed, \eqref{condbeta} requires $\betpar{3}\geq|\alpha|$. And the optimal choice $\betpar{3}=|\alpha|$ enforces
      \[
      \betpar{1} + \betpar{2} \leq 2|\alpha|,\qquad \betpar{2}-\alpha\geq|\alpha|, \qquad 2\betpar{1}+\alpha\geq|\alpha|, \qquad 2\betpar{2}+\alpha\geq|\alpha|
      \]
      which uniquely determine $\betpar{1}$ and $\betpar{2}$ as $\betpar{1} = 0$, $\betpar{2} = 2\alpha$ when $\alpha\geq 0$ and $\betpar{1} = -\alpha$, $\betpar{2} = -\alpha$ otherwise. The optimal choice for $\betperp{3}$ is $\betperp{3} = 2-\alpha$ that is available provided that $\betperp{1} \geq 1-\alpha$, $\betperp{2} \geq 2$ and $\betperp{1}+\betperp{2}\leq 4-2\alpha$. The latter inequalities do not determine $\betperp{1}$ and $\betperp{2}$ uniquely but the best choice is indeed $\betperp{1}=1-\alpha$, $\betperp{2}=2$.
      \bpr
      Let $\phi\in W^{2,\infty}(\T^3)$. Under condition~\eqref{condbeta} there exist positive constants $\gamperp{i}$, $\gampar{i}$, $i\in\{1,2,3\}$ such that, 
      \begin{itemize}
	\item uniformly in $(t,\eps)\in\R_+\times(0,1)$, the norm $\norm{\eps}{t}{\cdot}$ is equivalent to the square root of
	$$
	\begin{array}{rcl}
	  \| \cdot\|^2
	  &+&\ds
	  \eps^{\betpar{1}}\,\min\left(\{1, \tfrac{t}{\varepsilon^{1+\alpha}}\}\right)\,\|\Apar (\cdot)\|^2
	  \,+\,\eps^{\betpar{2}}\,\min\left(\{1, \tfrac{t}{\varepsilon^{1+\alpha}}\}\right)^3\,\|\Cpar(\cdot)\|^2\\[1em]
	  &+&\ds
	  \eps^{\betperp{1}}\,\min\left(\{1, \tfrac{t}{\varepsilon^{1+\alpha}}\}\right)\,\|\Aperp(\cdot)\|^2
	  \,+\,\eps^{\betperp{2}}\,\min\left(\{1, \tfrac{t}{\varepsilon^{1+\alpha}}\}\right)^3\,\|\Cperp(\cdot)\|^2\,.
	\end{array}
	$$
	\item there exists $K>0$ such that for any $h_0\in\cH$ the solution $h^\eps$ of~\eqref{FPh} starting from $h_0$ satisfies \eqref{estiml2dissipani} for any $\eps\in(0,1)$ and any $t\geq0$.
      \end{itemize}
      \label{hypocoercpar}
      \epr
      To emphasize the algebraic nature of the proof, as in \cite{Villani_hypo} we introduce abstract notation for operators
      \[
      A\ =\ \nabla_v\,,\qquad
      B\ =\ v\cdot\nabla_x -\sigma\,\nabla_x\phi\cdot\nabla_v,
      \]
      and their coordinate-wise adjoints in $\cH$,
      \[
      A^*\ =\ (v-\nabla_v) \,,\qquad B^* = -B.
      \]
      Additionally, we evaluate (and define) the following commutators
      \[
      C\ :=\ [A,B] = \nabla_x\,,\qquad
      [B,C] = \sigma\text{Hess}(\phi)\,\nabla_v\,,\qquad
      [A_i,A^*_j] = \delta_{ij}\,,\qquad
      [A, v^\perp\cdot\nabla_v]\ =\ -A^\perp 
      \]
      where $\delta_{ij}$ is the classical Kronecker symbol. Also, as a preliminary computation we observe, using Einstein's summation convention on repeated indices, that
      \[
      \begin{aligned}
	&\|A^*\cdot Ah\|^2&=&&&\lla A^*_iA_ih, A^*_jA_jh \rra\\
	&&=&&&\lla A_jA^*_iA_ih, A_jh \rra\\
	&&=&&&\lla \delta_{ij}A_ih, A_jh \rra + \lla A_jA_ih, A_iA_jh \rra\\
	&&=&&&\|Ah\|^2 + \|A^2h\|^2,
      \end{aligned}
      \]
      where $A^2$ denotes the matrix $A\otimes A$. 
      Finally we set 
      \[
      L_\eps = \frac{1}{\varepsilon^{\alpha+1}}\,A^*\cdot A + \frac{1}{\varepsilon}\,B.
      \]
      to write \eqref{FPh} as
      \[
      \left(\partial_t + L_\eps - \frac{\sigma}{\varepsilon^2}\,v^\perp\cdot\nabla_v\right)h^\eps\,=\,0.
      \]
      \bl
      Let $h^\eps$ solves \eqref{FPh}. Then equations \eqref{estvxpar} and \eqref{estvxperp} hold as well as
      \begin{align}
	&\frac 12\frac{\dD}{\dD t}\|h^\eps\|^2  + \frac{1}{\varepsilon^{1+\alpha}}\|Ah^\eps\|^2\,=\,0,\label{est0}\\
	&\frac 12\frac{\dD}{\dD t}\|A_\# h^\eps\|^2 + \frac{1}{\varepsilon^{1+\alpha}}(\|A_\# h^\eps\|^2 + \|A_\#Ah^\eps\|^2)\ =\ -\frac{1}{\varepsilon}\lla C_\#h^\eps, A_\#h^\eps\rra,\label{estv}\\
	&\frac 12\frac{\dD}{\dD t}\|C_\# h^\eps\|^2 + \frac{1}{\varepsilon^{1+\alpha}}\|C_\# Ah^\eps\|^2\ =\ \frac{\sigma}{\varepsilon}\lla (Ah^\eps\cdot\nabla_x)\nabla_{x_\#}\phi,C_\# h^\eps\rra,\label{estx}
      \end{align}
      where $\#$ stands either for $\perp$ or $\mypar$.
      \label{lemhypo}
      \el
      
      \begin{proof}
	As already pointed out, estimate \eqref{est0} follows from \eqref{e:H} with $H(h)=h^2$. We prove \eqref{estv} by computing on one hand for any $i$
	\[
	\lla A_iL_\eps h^\eps,A_ih^\eps\rra
	\ =\ \frac{1}{\varepsilon^{1+\alpha}}(\|A_ih^\eps\|^2 + \|A_i Ah^\eps\|^2)+\frac{1}{\varepsilon}\lla C_ih^\eps,A_ih^\eps\rra
	\]
	and on the other hand
	\[
	\lla A_i v^\perp\cdot\nabla_v h^\eps,A_ih^\eps\rra\ =\ -\lla (A^\perp)_i h^\eps, A_ih^\eps\rra
	\]
	which vanishes when either one sums over $i=1,2$ or takes $i=3$. Equality \eqref{estx} follows from the fact that for any $i$
	\[
	\lla C_iL_\eps h^\eps,C_ih^\eps\rra\,=\ \frac{1}{\varepsilon^{1+\alpha}}\|C_i Ah^\eps\|^2 - \frac{\sigma}{\varepsilon}\lla (Ah^\eps\cdot\nabla_x)\d_{x_i}\phi,C_ih^\eps\rra
	\]
	and the fact that the $v^\perp\cdot\nabla_v$ commutes with space derivatives hence its contribution vanishes by skew-symmetry. To derive \eqref{estvxpar} and \eqref{estvxperp} we first observe that for any $i$
	$$
	\begin{array}{rcl}\ds
	  \Big\langle C_iL_\eps h^\eps\hspace{-1em}&\hspace{-1em},\hspace{-1em}&\hspace{-1em}\ds
	  A_ih^\eps\Big\rangle+\lla A_iL_\eps h^\eps,C_ih^\eps\rra\\[0.5em]
	  &=&\ds
	  \frac{1}{\eps^{1+\alpha}}\lla C_i h^\eps,A_ih^\eps\rra 
	  +\frac{2}{\eps^{1+\alpha}}\lla AC_i h^\eps,A A_ih^\eps\rra 
	  -\frac{\sigma}{\eps}\lla (Ah^\eps\cdot\nabla_x)\d_{x_i}\phi,A_ih^\eps\rra
	  +\|C_ih^\eps\|^2
	\end{array}
	$$
	and conclude with for any $i$
	$$
	\lla C_i v^\perp\cdot\nabla_v h^\eps,A_ih^\eps\rra +\lla A_i v^\perp\cdot\nabla_v h^\eps,C_ih^\eps\rra
	\ =\ -\lla (A^\perp)_i h^\eps, C_ih^\eps\rra\,.
	$$
      \end{proof}

      \begin{proof}[Proof of Proposition \ref{hypocoercpar}]
	By using estimates  \eqref{est0}, \eqref{estx}, \eqref{estv}, \eqref{estvxperp} and \eqref{estvxpar} in each direction we infer that\footnote{We keep the variable $t$ essentially implicit in the estimate for concision's sake.}, for any $\eps\in(0,1)$, $\beta\in\R^6$ and $\gamma\in(\R_+^*)^6$
	$$
	\begin{array}{rcl}
	  &\ds\frac12\hspace{-0.5em}&\ds
	  \hspace{-0.5em}\frac{\dD}{\dD t}\left(\|h^\eps\|_{\eps,\cdot}^2\right)\\[0.5em]
	  &+&\ds
	  \sum_{\#\in\{\perp,\mypar\}}\Big[
	  \eps^{-1-\alpha}\|A_\#h^\eps\|^2
	  +\gamma_{1\#}\min\left(1, \tfrac{t}{\eps^{1+\alpha}}\right)\eps^{\beta_{1\#}-1-\alpha}(\|A_\#h^\eps\|^2 + \|A_\#Ah^\eps\|^2)\\[0.5em]
	  &&\ds
	  \phantom{\sum_{\#\in\{\perp,\mypar\}}}+
	  \gamma_{2\#}\min\left(1, \tfrac{t}{\eps^{1+\alpha}}\right)^3\eps^{\beta_{2\#}-1-\alpha}\|C_\#Ah^\eps\|^2
	  + \gamma_{3\#}\min\left(1, \tfrac{t}{\eps^{1+\alpha}}\right)^2\eps^{\beta_{3\#}-1}\|C_\#h^\eps\|^2\Big]
	  \\[0.5em]
	  &\leq&\ds
	  \sum_{\#\in\{\perp,\mypar\}}\Big[
	  \gamma_{1\#}\min\left(1, \tfrac{t}{\eps^{1+\alpha}}\right)\eps^{\beta_{1\#}-1}\|C_\#h^\eps\|\|A_\#h^\eps\|\\[0.5em]
	  &&\ds
	  \phantom{\sum_{\#\in\{\perp,\mypar\}}}+
	  \gamma_{2\#}\min\left(1, \tfrac{t}{\eps^{1+\alpha}}\right)^3\|\text{Hess}\phi\|_{L^\infty(\T^3)} \varepsilon^{\beta_{2\#}-1}\|C_\#h^\eps\|\|Ah^\eps\|\\[0.5em]
	  &&\ds
	  \phantom{\sum_{\#\in\{\perp,\mypar\}}}+
	  \gamma_{3\#}\min\left(1, \tfrac{t}{\eps^{1+\alpha}}\right)^2\|\text{Hess}\phi\|_{L^\infty(\T^3)} \varepsilon^{\beta_{3\#}-1}\|A_\#h^\eps\|\|Ah^\eps\|\\[0.5em]
	  &&\ds
	  \phantom{\sum_{\#\in\{\perp,\mypar\}}}+
	  \gamma_{3\#}\min\left(1, \tfrac{t}{\eps^{1+\alpha}}\right)^2\eps^{\beta_{3\#}-1-\alpha}\left(\|A_\#h^\eps\|\| C_\#h^\eps\| + 2\|A_\#Ah^\eps\|\|C_\#Ah^\eps\|\right)\Big]\\[0.5em]
	  &+&\ds
	  \gamma_{3\perp}\min\left(1, \tfrac{t}{\varepsilon^{1+\alpha}}\right)^2\varepsilon^{\betperp{3}-2}\|A_\perp h^\eps\|\|C_\perp h^\eps\|\\[0.5em]
	  &+&\ds
	  \tfrac12\sum_{\#\in\{\perp,\mypar\}}\Big[
	  \gamma_{1\#}\varepsilon^{\beta_{1\#}-1-\alpha}\|A_\#h^\eps\|^2
	  +3\gamma_{2\#}\varepsilon^{\beta_{2\#}-1-\alpha}\min\left(1, \tfrac{t}{\varepsilon^{1+\alpha}}\right)^2\|C_\#h^\eps\|^2\\[0.5em]
	  &&\ds
	  \phantom{\sum_{\#\in\{\perp,\mypar\}}}\quad+
	  4\gamma_{3\#}\varepsilon^{\beta_{3\#}-1-\alpha}\min\left(1, \tfrac{t}{\varepsilon^{1+\alpha}}\right)\|A_\#h^\eps\|\|C_\# h^\eps\|\Big{]}
	\end{array}
	$$
	where the last sum arises from differentiation of time factors in the definition of $\|h^\eps(t,\cdot,\cdot)\|_{\eps,t}$. In the former inequality, the right-hand side may be controlled by the dissipation on the left-hand side using the following procedure. Young's inequalities implies that for any $(a,b,c)\in\R^3$ and $(K_a,K_b,K_c)\in(\R_+^*)^3$ there exists $K_0>0$ such that for any positive $N_1, N_2$ and any $\eps\in(0,1)$ 
	\[
	-K_a\ \eps^a N_1^2 -K_b\ \eps^b N_2^2 + 2K_c\  \eps^c N_1N_2 \leq -K_0( \eps^a N_1^2 + \eps^b N_2^2),
	\]
	provided that\footnote{One may relax conditions to 
	  \[
	  2c>a+b\qquad\text{or}\qquad\left(2c=a+b\ \text{and}\ K_c<K_aK_b\right)
	  \]
	  if one replaces $\eps\in(0,1)$ with $\eps\in(0,\eps_0)$ for some sufficiently small $\eps_0$ depending on $(a,b,c)$ and $(K_a,K_b,K_c)$.}
	\[
	2c\geq a+b\qquad\text{and}\qquad K_c<K_aK_b\,.
	\]
	It imposes the following conditions 
	on $\beta$
	\[
	\begin{aligned}
	  \left.\begin{array}{rcl}
	    \ds\min(\{\,\alpha + 2 \beta_{1\#}\,,\,\alpha + 2 \beta_{2\#}\,\})&\geq&\ds\beta_{3\#}\\[0.5em]
	    \ds\max(\{\,\alpha\,,\,-\alpha,\,(\beta_{1\#} + \beta_{2\#})/2\,\})&\leq&\beta_{3\#}
	  \end{array}\right\}&\text{ Induced by Fokker-Planck}\\
	  \betperp{3}\ \geq\  2-\alpha\qquad&\text{ Induced by magnetic field}\\
	  \left.\begin{array}{rcl}
	    \beta_{1\#}&\geq&0\\[0.5em]
	    \beta_{2\#}-\alpha&\geq&\beta_{3\#}\\[0.5em]
	    \alpha&\leq&\beta_{3\#}
	  \end{array}\right\}&\text{ Due to regularization in time}
	\end{aligned}
	\]
	By eliminating redundant conditions we obtain \eqref{condbeta}. It remains to find $\gamma$s satisfying the remaining constraints ''$K_c<K_aK_b$''. To do so we seek $\gamma$s as $\gamma_{i\#} = \eta^{c_{i}}$ with $\eta$ positive and sufficiently small. Remaining conditions reduce then to
	\[
	2c_1>c_3 + 0, \qquad 2c_2>c_3 + 0, \qquad c_3 > 0, \qquad 2c_3>c_3 + 0, \qquad 2c_3>c_1 + c_2, 
	\]
	\[
	c_3>0,\qquad c_1>0, \qquad c_2>c_3, \qquad c_3>0
	\]
	and a smallness condition on $\eta$ that we do not explicit here. To achieve the proof note that the former condition on $c\,$s is satisfied with for instance $c_1 = 1$, $c_2=2$ and $c_3 = 7/4$. At last we observe that those conditions are also sufficient to ensure by similar arguments the claimed equivalence of norms. 
      \end{proof}

      \section{Maximum principle on the torus}\label{s:maxprinc}
      
      For convenience we state and prove here maximum principles adapted to our purposes.
      
      \bl[Maximum principle on the torus, three-dimensional case] There exists a constant $C$ such that if $u\in W^{1,1}(\TT^3)$ and $-\Delta u\leq K_0$ with $K_0\in L^{3/2,1}(\TT^3)$ then 
      $$
      \esssup_{\TT^3} u\,\leq\,C\,(\|u\|_{W^{1,1}(\TT^3)}+\|K_0\|_{L^{3/2,1}(\TT^3)})\,.
      $$
      \label{maxprinc}
      \el
      
      We refer for instance to \cite[Chapter~2]{Lemarie-Rieusset} for relevant basic properties of Lorentz spaces appearing in the statement of the former lemma.
      
      The foregoing lemma is readily derived from the following one through the canonical identification of functions on $\TT^3$ and periodic functions on $\R^3$ by choosing $\Omega$ containing a fundamental domain and $\Omega'$ a suitably larger bounded domain.
      
      \bl[Maximum principle, three-dimensional case] 
      Let $\Omega'$ be an open subset of $\R^3$ and $\Omega$ be a bounded open subset of $\Omega$ such that $\overline{\Omega}\subset\Omega'$.\\ 
      There exists a constant $C$ such that if $u\in W^{1,1}(\Omega')$ and $-\Delta u\leq K_0$ with $K_0\in L^{3/2,1}(\Omega')$ then 
      $$
      \esssup_{\Omega} u\,\leq\,C\,(\|u\|_{W^{1,1}(\Omega')}+\|K_0\|_{L^{3/2,1}(\Omega')})\,.
      $$
      \label{maxprinc-bis}
      \el
      
      \begin{proof}
	It is sufficient to prove the estimate when $u$ and $K_0$ are smooth. From there a classical approximation argument yields the full result by providing upper bounds at any Lebesgue point of $u$.
	
	For any $0\leq \eps <\delta$ and $x_0\in\R^3$, we set
	$$
	\Omega_{\eps,\delta}(x_0)\,=\,\{\quad y\in\R^3\quad|\quad\eps<\|y-x_0\|<\delta\quad\}
	$$ 
	and for any $\eps>0$ and $x_0\in\R^3$, we denote 
	$$
	S_{\eps}(x_0)\,=\,\{\quad y\in\R^3\quad|\quad\|y-x_0\|=\eps\quad\}\,.
	$$
	We also consider $G:\R^3\to\R$, $y\mapsto \|y\|^{-1}$, which is a multiple of the Green function of the Laplacian on $\R^3$. 
	
	First we choose $R>0$ such that for any $x_0\in\Omega$, $\overline{\Omega_{0,\delta}(x_0)}\subset \Omega'$. Then, for any $x_0\in\Omega$, repeated use of the Green formula provides for any $0<\eps<R$
	$$
	\begin{array}{rcl}
	  \ds
	  \frac{1}{\eps^2}\int_{S_\eps(x_0)}u
	  &=&\ds
	  \frac{1}{R^2}\int_{S_R(x_0)}u\,+\,\left(\frac{1}{\eps}-\frac{1}{R}\right)\int_{\Omega_{0,\eps}(x_0)}(-\Delta u)
	  \,+\,\int_{\Omega_{\eps,R}(x_0)}\left(G(\cdot-x_0)-\frac{1}{R}\right)(-\Delta u)
	  \\
	  &\leq&\ds
	  \frac{1}{R^2}\int_{S_R(x_0)}u\,+\,\left(\frac{1}{\eps}-\frac{1}{R}\right)\int_{\Omega_{0,\eps}(x_0)}K_0
	  \,+\,\int_{\Omega_{\eps,R}(x_0)}\left(G(\cdot-x_0)-\frac{1}{R}\right)K_0
	  \\
	  &\leq&\ds
	  \frac{1}{R^2}\|u\|_{L^{1}(S_R(x_0))}\,+\,\frac{1}{\eps}\|K_0\|_{L^1(\Omega_{0,\eps}(x_0))}
	  \,+\,\left\|\left(G(\cdot-x_0)-\frac{1}{R}\right)K_0\right\|_{L^1(\Omega_{0,R}(x_0))}
	  \\
	  &\leq&\ds
	  C'\|u\|_{W^{1,1}(\Omega')}\,+\,C'\|K_0\|_{L^{3/2}(\Omega_{0,\eps}(x_0))}
	  \,+\,C'\|K_0\|_{L^{3/2,1}(\Omega')}
	\end{array}
	$$
	for some constant $C'$ depending only on $R$ (for instance through $\|G-R^{-1}\|_{L^{3,\infty}(\Omega_{0,R}(0))}$). Taking the limit $\eps\to0$ provides the estimate at $x_0$.
      \end{proof}
      
      We shall also use the foregoing slight generalization.
      
      \bl[Maximum principle, three-dimensional case, second version] 
      Let $\Omega'$ be an open subset of $\R^3$ and $\Omega$ be a bounded open subset of $\Omega$ such that $\overline{\Omega}\subset\Omega'$.\\ 
      There exists a constant $C$ such that if $u\in W^{1,1}(\Omega')$ and, for some $K_0\in L^{3/2,1}(\Omega')$, stands $\chi_{u\geq0}(-\Delta u)\leq K_0$ on $\Omega'$ then 
      $$
      \esssup_{\Omega} u\,\leq\,C\,(\|u\|_{W^{1,1}(\Omega')}+\|K_0\|_{L^{3/2,1}(\Omega')})\,.
      $$
      \label{maxprinc-ter}
      \el
      
      \begin{proof}
	Since $u\leq u_+$ (where $(\cdot)_+$ denotes positive part) the proof is achieved by applying Lemma~\ref{maxprinc-bis} to $u_+$ since $\nabla u_+=\chi_{u>0}\nabla u$ and 
	$$-\Delta u_+ \leq \chi_{u>0}(-\Delta u)$$
	as is classical and derived by inspecting the limit $\eps\to0$ of 
	$$
	(\sqrt{\eps^2+u^2}-\eps)\chi_{u\geq0}\,.
	$$
      \end{proof}
      
      At last we observe that a slight variation on the proof of Lemma~\ref{maxprinc-bis} provides the following two-dimensional versions of Lemmas~\ref{maxprinc} and~\ref{maxprinc-ter}.
      
      \bl[Maximum principle on the torus, two-dimensional case] For any $1<p\leq\infty$ there exists a constant $C$ such that if $u\in W^{1,1}(\TT^2)$ and $-\Delta u\leq K_0$ with $K_0\in L^p(\TT^2)$ then 
      $$
      \esssup_{\TT^2} u\,\leq\,C\,(\|u\|_{W^{1,1}(\TT^2)}+\|K_0\|_{L^{p}(\TT^2)})\,.
      $$
      \label{maxprinc-2D}
      \el
      
      \bl[Maximum principle, two-dimensional case, second version] 
      Let $1<p<\infty$, $\Omega'$ be an open subset of $\R^2$ and $\Omega$ be a bounded open subset of $\Omega$ such that $\overline{\Omega}\subset\Omega'$.\\ 
      There exists a constant $C$ such that if $u\in W^{1,1}(\Omega')$ and $\chi_{u\geq0}(-\Delta u)\leq K_0$ on  with $K_0\in L^p(\Omega')$ then 
      $$
      \esssup_{\Omega} u\,\leq\,C\,(\|u\|_{W^{1,1}(\Omega')}+\|K_0\|_{L^p(\Omega')})\,.
      $$
      \label{maxprinc-2D-bis}
      \el
      \bibliography{gyroVPFP.bib}
      \bibliographystyle{plain}
    \end{document}